\def\an#1{{{\color{black}#1}}}
\def\us#1{{{\color{black}#1}}}
\def\uvs#1{{{\color{black}#1}}}
\def\jk#1{{{\color{black}#1}}}
\def\jk#1{{{\color{black}#1}}}
\def\fskip#1{}
\newtheorem{assumption}{Assumption}
\newtheorem{corollary}{Corollary}
\newtheorem{example}{Example}
\newtheorem{lemma}{Lemma}
\newtheorem{proposition}{Proposition}
\def\Gscr{{\cal G}}
\def\Nscr{{\cal N}}
\def\Escr{{\cal E}}
\def\be{\begin{enumerate}}
\def\ee{\end{enumerate}}
\def\Nscr{{\cal N}}
\newcommand{\pmat}[1]{\begin{pmatrix} #1 \end{pmatrix}}
\def\ren{\mathbb{R}^n}
\def\re{\mathbb{R}}
\def\argmin{\mathop{\rm argmin}}
\title{Distributed Algorithms for Aggregative Games on Graphs}
\author{Jayash Koshal, Angelia Nedi\'c and Uday
	V.~Shanbhag\thanks{Koshal and Nedi\'{c} are in the 
Department of Industrial and Enterprise
Systems Engineering,
University of Illinois, Urbana IL 61801, while Shanbhag is in the
	Department of Indsutrial and Manufacturing Engineering, Pennsylvania
	State University, University Park, PA 16802. They are contactable at
{\{koshal1,angelia\}}@illinois.edu and {udaybag@psu.edu}. Nedi\'c and Shanbhag
	gratefully acknowledge the NSF support of this work through the
grants NSF CMMI 09-48905 EAGER ARRA (Nedi\'{c} and Shanbhag), NSF CAREER
CMMI 1246887 (Shanbhag),  and NSF CMMI 07-42538 (Nedi\'{c}).}
}
\begin{document}
\maketitle
\thispagestyle{empty}
\pagestyle{empty}

\begin{abstract}
 \noindent
   We consider a class of Nash games, termed as aggregative games, being
   played over a networked system. In an aggregative game, a player's
   objective is a  function of the aggregate of all the players'
   decisions.  Every player maintains an estimate of \uvs{this}
   aggregate, and the players exchange this information with their
   \uvs{local} neighbors over a connected network. We study distributed
   synchronous and asynchronous algorithms for information exchange and
   equilibrium computation over such a network. Under standard
   conditions, we establish the almost-sure convergence of the obtained
   sequences to the equilibrium point.  We also consider extensions of
   our schemes to aggregative games where the players' objectives are
   coupled through a more general form of aggregate function.  Finally,
   we present numerical results that demonstrate the performance of the
   proposed schemes.
\end{abstract}


\section{Introduction}
An aggregative game is a non-cooperative Nash  game in which each
player's payoff depends on its action and an aggregate
function of the actions taken by all
players~\cite{novshek_courexist_1985,
	jensen_agggames_2006,jensen10aggregative,
	martimort_aggrepresent_2011}.\footnote{Such games have been shown to be
	closely related with subclasses of potential
	games~\cite{Schipper04pseudo-potentialgames,dubey_pseudopotential06}
	where a potential game refers to a Nash game in which the payoff
	functions admit a potential function~\cite{MondererShapley96}.
	\uvs{The potential function of an aggregative game is a {special case
		of the function employed} in \cite{consensus_propagation}, where
			distributed algorithms for optimization problems with general separable convex functions are presented.}}
	Nash-Cournot
games represent an important instance of such games; here, firms make quantity
bids that fetch a price based on aggregate quantity sold, implying
that the payoff of any player is a function of the aggregate
sales~\cite{cournot1838,fudenberg91game}. The ubiquity of
	such games has grown immensely in the last two decades and examples
		emerge in the form of supply function
		games~\cite{jensen10aggregative}, common agency
		games~\cite{jensen10aggregative}, and power and rate control in
		communication
		networks~\cite{alpcan02game,alpcan03distributed,basar07control,yin09nash2}
	(see~\cite{ferrer05evolutionary} for more examples).
		Our work is motivated by the development of distributed
		algorithms on a range of  game-theoretic problems in
wired and wireline communication networks where such an aggregate function
captures the link-specific
congestion~\cite{alpcan03distributed,basar07control} or the
signal-to-noise
ratio~\cite{pan09games,pan10system,stefanovic11lyapunov}. In almost
all of the algorithmic research on equilibrium computation, it is assumed that  the aggregate of player decisions
	is observable to all players, allowing every player to evaluate its
	payoff function without any prior communication. 

In this paper, we consider aggregative games wherein the players 
(referred to as agents) compete over a  network.  
Distributed computation of equilibria in such games is complicated by
two crucial challenges. First, the connectivity graphs of the underlying
network may evolve over time. Second, agents do not have ready access to
aggregate decisions, implying that agents cannot compute their payoffs
(or their gradients). Consequently, distributed
gradient-based~\cite{alpcan03distributed,pan09games,kannan10online,koshal10single}
or best-response schemes~\cite{scutari13joint} cannot be directly
implemented since agents do not have immediate access to the aggregate.
Accordingly, we propose two distributed agreement-based algorithms
{that overcome this difficulty by 
allowing agents to build estimates of the aggregate by communicating
	with their local neighbors} and consequently
compute an equilibrium of aggregative games. Of these, the first is a
synchronous algorithm where all agents update simultaneously, while the
second, a gossip-based algorithm, allows for asynchronous computation:
\begin{enumerate}
\item[(a)] {\em {Synchronous}  distributed  algorithm:} At each epoch,
	every agent performs a ``learning step'' to update its estimate of
	the aggregate using the information obtained through the
	time-varying states of its neighbors.  All agents exchange
	information and subsequently update {their decisions}
	simultaneously {via a gradient-based update}.  This
	algorithm builds on the ideas of the method developed
	in~\cite{ram_2010_stocsub} for distributed optimization problems.
\item[(b)] {\em Asynchronous distributed algorithm:} In contrast, the
asynchronous algorithm uses a gossip-based protocol for information
exchange.  In the gossip-based {scheme}, a single pair of randomly
selected {neighboring agents}
	exchange their information and update their estimates of \us{both} the
	aggregate and their \us{individual} decisions.  \us{This} algorithm combines our
	synchronous method in (a) with the gossip technique proposed
	in~\cite{boyd_gossip_2006} for the agreement (consensus) problem.
\end{enumerate}
We investigate the convergence behavior of both algorithms under a
diminishing stepsize rule, and provide error bounds under a constant
	steplength regime. Additionally, the results are supported with
	numerics derived from application of the proposed schemes on a class
	of networked Nash-Cournot games. The novelty of this work lies in our
	examination of distributed (neighbor-based) algorithms for
	computation of a Nash equilibrium point for aggregative Nash games,
	while the majority of preceding efforts on such algorithms have been
	spent towards solving feasibility and optimization problems.
        Before proceeding, a caveat is in order. While the proposed
	game-theoretic problem can be easily solved via a range of
		centralized schemes (see~\cite{Facchinei_Pang03} for a
				\an{comprehensive survey}), any such
		approach relies on the centralized availability of all
		information, a characteristic that does {\em not} hold in the
		present setting.  Instead, our interest is not merely in
		equilibrium computation but in the development of stylized
		distributed protocols, implementable on networks, and
		complicated by informational restrictions, local communication
		access, and a possibly evolving network structure.

Broadly speaking, the present work can be situated in the larger domain of distributed computation of equilibria
in networked Nash games. First proposed by Nash in
	1950~\cite{nash50equilibrium}, this equilibrium concept has found
	application in modeling strategic interactions in  oligopolistic
	problem settings drawn from economics, engineering, and the applied
	sciences~\cite{fudenberg91game,facchinei09variational,basar07control}.
	More recently, game-theoretic models have assumed relevance in the
	control of a large collection of coupled nonlinear
	systems, instances of which arise in production
	planning~\cite{huang07large-population}, synchronization of
	coupled oscillators~\cite{yin12synchronization}, amongst others. In
	particular, agents in such settings have conflicting objectives and
	the centralized control problem is challenging. By allowing
	agents to compete, the equilibrium behavior may be analyzed exactly
	or approximately (in large population regimes), allowing for the
	derivation of distributed control laws. In fact, game-theoretic
	approaches have been effectively utilized in obtaining distributed
	control laws in complex engineered systems~\cite{ran08,ran09}.
	Motivated by the ubiquity of game-theoretic models, arising either
	naturally or in an engineered form, the distributed computation of
	equilibria has immense importance.

	While \us{equilibrium} computation is a well-studied
	topic~\cite{Fudenberg}, our interest lies in networked regimes where
	agents can only access or observe the decisions of their neighbors.
	In such contexts, our interest lies in developing distributed
	gradient-based schemes. While any such algorithmic development is
	well motivated by protocol design in networked multi-agent systems,
	best-response schemes, rather than gradient-based methods, are
	natural choices when players are viewed as fully rational.
	However, gradient-response schemes assume relevance for several
	reasons. First, increasingly game-theoretic approaches are
	being employed for developing distributed control protocols where
	the choice of schemes lies with the designer
	(cf.~\cite{li13designing,marden13modelfree}). Given the relatively
	low complexity of gradient updates, such avenues are attractive for
	control systems design.  Second, when players rule out
	strategies that are characterized by high computational
	complexity~\cite{Papadimitriou94onbounded} (referred to as a
			``bounded-rationality'' setting\footnote{This notion is
			rooted in the influential work by
			Simon~\cite{Simon96sciences} where it  is suggested that,
			when reasoning and computation are costly, agents may not
			invest in these resources for marginal benefits.}),
	gradient-based approaches become relevant  and have been
		employed extensively in the context of communication
			networks~\cite{alpcan02game,alpcan03distributed,yin09nash2,pan09games,stefanovic11lyapunov}.

The present work assumes a strict monotonicity property on the
mapping corresponding to the associated variational problem. This
assumption is weaker than that imposed by related studies on
communication networks~\cite{alpcan02game,alpcan03distributed} where
strong monotonicity properties are imposed. From a methodological
standpoint, we believe that this work represents but a first step.  By
	combining a regularization technique, this requirement can be
	weakened~\cite{kannan10online} while extensions to stochastic
	regimes can also be incorporated by examining regularized
	counterparts of stochastic approximation~\cite{koshal10single}.
	However, all of these approaches are under the assumption that
	agents have access to the decisions of all their competitors.

Finally, it should be emphasized that the distributed algorithms
presented in this paper draw inspiration from the seminal work
in~\cite{Tsitsiklis84}, where a distributed method for optimization has
been developed by allowing agents to communicate locally with their
neighbors over a time-varying communication network.  This idea has
attracted a lot of attention recently in an effort to extend the
algorithm of~\cite{Tsitsiklis84} to more general and broader range of
problems~\cite{nedic09distributed,Ram2010,Ram2009cdc,Johansson2009,Johansson_thesis,
nedic10constrained,Kunal2011,Nedic2011,
lobel11distributed,tu12diffusion,ram2012,tu12influence,Bianchi2012}.
Much of the aforementioned work focuses on optimizing the
sum of local objective 
function~\cite{nedic09distributed,Ram2010,Ram2009cdc,
nedic10constrained,Kunal2011,Nedic2011,lobel11distributed} 
in a multi-agent networks, while a subset of recent work 
considered the min-max 
optimization problem~\cite{kunal_bregman_11,Boche_minmax_2007}, 
where the objective is to 
minimize the maximum cost incurred by any agent in the network. 
Notably, extensions of consensus based 
algorithms have also 
been studied in the domain of distributed 
regression~\cite{ram2012}, estimation and 
inference 
tasks~\cite{tu12diffusion,tu12influence}. \jk{While \uvs{much of} the aforementioned work focuses on consensus-based algorithms, an 
alternative distributed messaging protocol \uvs{for}
consensus propagation 
	across a network is presented in~\cite{min_sum_message_passing}.} The work in 
this 
paper extends the realm of \jk{consensus based algorithms (and not consensus propagation)} to capture 
competitive aspect of multi-agent networks.

The remainder of the paper is organized as follows. In
section~\ref{sec:model}, we describe the problem of interest, provide
two motivating examples and state our assumptions. A synchronous
distributed algorithm is proposed in section~\ref{sec:synchron} and
convergence theory is provided. An asynchronous gossip-based variant of
this algorithm is described in section~\ref{sec:asynchron} and is
supported by convergence theory and error analysis.  In
section~\ref{sec:extension}, we present an extension to the problem
presented in section~\ref{sec:model} and suitably adapt the distributed
synchronous and asynchronous algorithm to address this generalization.
We present some numerical results in section~\ref{sec:numerics} and,
   finally, conclude in section~\ref{sec:conclusion}. 

Throughout this paper, we view vectors as columns. We write $x^T$
to denote the transpose of a vector $x$, and $x^Ty$
to denote the inner product of vectors $x$ and $y$.
We use  $\|x\|=
\sqrt{x^Tx}$ to denote the Euclidean norm of a vector $x$.
We use $\Pi_K$ to denote the Euclidean projection operator on a set $K$,
i.e.\ $\Pi_K(x)\triangleq \argmin_{z \in K} \|x-z\|.$ The
expectation of a random variable $Y$ is denoted by $\mathbb{E}[Y]$ and {\it
		a.s.} denotes {\it almost surely}.
		A matrix $W$ is row-stochastic if $W_{ij}\ge0$ for all $i,j$, and $\sum_{j}W_{ij}=1$.
A matrix $W$ is doubly stochastic if both $W$ and $W^T$ are \jk{row} stochastic.

\section{Problem Formulation and Background}  \label{sec:model}
In this section we introduce an aggregative game of our interest and
provide its sufficient equilibrium conditions. The players in this
	game are assumed to have local interactions with each other
	over time, where these interactions are modeled by time-varying
	connectivity graphs.  We also discuss some auxiliary results for the
	players' connectivity graphs and present our distributed algorithm
	for equilibrium computation.

\subsection{Formulation and Examples}\label{sec:game}
Consider a set of $N$ players (or agents) indexed by $1,\ldots,N,$ and
let $\Nscr = \{1,\ldots,N\}$.  The $i$th player is characterized by a
strategy set $K_i \subseteq\mathbb{R}^n$ and a payoff function
$f_i(x_i,\bar x)$, which depends on player $i$ decision $x_i$ and the
aggregate $\bar x=\sum_{i=1}^N x_i$ of all player decisions.
\uvs{Furthermore, $\bar x_{-i}$ denotes the aggregate of all players
	excepting player $i$, i.e.,
\[
\bar x_{-i} = \sum_{j=1, j \neq i}^Nx_j.
\]} To
formalize the game, let $\bar K$ denote the Minkowski sum of the sets
$K_i$, \us{defined as follows}:
\begin{equation}\bar K\triangleq \sum_{i=1}^N K_i.\label{eqn:setsumk}
\end{equation}
In a generic aggregative game, \us{given $\bar x_{-i}$}, player $i$  faces the following
parametrized optimization problem:
\begin{align}\label{eqn:problem}
\mbox{minimize} & \qquad f_i(x_i,\bar x) \, \triangleq\, f_i(x_i, x_i + \bar x_{-i}) \cr
\mbox{subject to} & \qquad x_i \in K_i,
\end{align}
where $K_i \subseteq\mathbb{R}^n$ and $\bar x$ is the aggregate of the agent's decisions $x_i$, i.e., 
\begin{align}\label{eqn:aggreg}
\bar x \, := \, \sum_{j=1}^Nx_j = x_i + \bar x_{-i},\qquad \bar x\in \bar K,
\end{align}
with $\bar K\subseteq\ren$ as given in~\eqref{eqn:setsumk}, and
$f_i:K_i\times \bar K\to\re$. 
The set $K_i$ and the function $f_i$ are assumed to be known by agent
$i$ only. Next, we motivate our work by providing an example of an aggregative
game, whose broad range emphasizes the potential scope of our work.

\begin{example}[Nash-Cournot game over a network]
\label{ex:nash_example}
A classical example of an aggregative game is a Nash-Cournot
played over a network~\cite{cournot1838,okuguchi90theory,kannan10online}. 
Suppose a set of $N$ firms compete over $\mathcal{L}$ locations. 
In this situation, the communication network of our interest 
is formed by the players which are viewed as  the nodes in the network. One such instance of connectivity graph is as shown in Figure~\ref{fig:connectivity_example}. This graph determines how the firms communicate their production decision over $\mathcal{L}$ locations. More specifically, the firm in the center of the graph has access to information from all the other firms, 
whereas all the other firms have access to the information of the firm in the center only. We consider other instances of connectivity graph in Section~\ref{sec:numerics}. To this end, let firm
\begin{figure}
\centering
\begin{tikzpicture}[scale=1.2]
\tikzstyle{vertex}=[draw,shape=circle];
\tikzstyle{selected}=[vertex,fill=black];
\tikzstyle{neighbor}=[vertex,fill=black!30];
\tikzstyle{nonneighbor}=[vertex,fill=black!0];

\tikzstyle{edge} = [draw,thick,-]
\tikzstyle{selected edge} = [draw,line width=1.2pt,dashed,black]
\tikzstyle{ignored edge} = [draw,line width=1.2pt,black]

\path[selected] (0:0cm)    node[neighbor] (v0) {};
\path (0:1cm)    node[neighbor] (v1) {};
\path (72:1cm)   node[neighbor] (v2) {};
\path (2*72:1cm) node[neighbor] (v3) {};
\path (3*72:1cm) node[neighbor] (v4) {};
\path (4*72:1cm) node[neighbor] (v5) {};


\path[ignored edge] (v0) -- (v1);
\path[ignored edge] (v0) -- (v2);
\path[ignored edge] (v0) -- (v3);
\path[ignored edge] (v0) -- (v4);
\path[ignored edge] (v0) -- (v5);
\end{tikzpicture}
\caption{An information connectivity graph for a network with 6 firms.}
\label{fig:connectivity_example}
\end{figure}
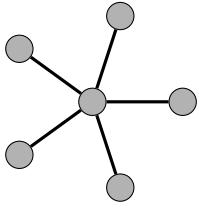
$i$'s production and sales 
at location 
$l$ \jk{be} denoted by
$g_{il}$ and $s_{il}$, respectively, while its cost of production at location $l$ is denoted by $c_{il}(g_{il})$. Consequently, goods sold by firm $i$ at location $l$ fetch a revenue
$p_l(\bar s_l)s_{il}$ where $p_l(\bar s_l)$ denotes the sales price
at location $l$ and $\bar s_l = \sum_{i=1}^N s_{il} $ represents the aggregate
sales at location $l$. Finally, firm $i$'s production at location $l$ is capacitated
by $\mathrm{cap}_{il}$ and its optimization problem is given by the
following\footnote{Note that the transportation costs are assumed to be
	zero.}:
\begin{align}\label{eqn:ncproblem}
\mbox{minimize} & \qquad \sum_{l =1}^\mathcal{L} \left( c_{il} (g_{il}) - p_l(\bar s_l)
		s_{il}\right)\cr
\mbox{subject to} & \qquad \sum_{l=1}^\mathcal{L} g_{il} {\geq} \sum_{l=1}^\mathcal{L} s_{il},
	\notag \\& \qquad g_{il}, s_{il} \geq 0,\quad g_{il} \leq \mathrm{cap}_{il}, \qquad l = 1,
	\hdots, \mathcal{L}. 
\end{align}
In effect, firm $i$'s payoff function is parametrized by nodal aggregate
sales, thus rendering  an aggregative game.  Note that, in 
this example we have two independent networks, the first being 
used to model the communication of the firms and the second 
being used to model the physical layout of the firms production 
unit and locations. We allow the communication network to be dynamic but the layout network 
is assumed to be static.
\end{example}

\subsection{Equilibrium Conditions and Assumptions}
To articulate sufficiency conditions, we make the following assumptions on the constraint sets 
$K_i$ and the functions $f_i.$ 

\begin{assumption}\label{assump:set_func} 
For each $i = 1,\ldots,N$,
	the set $K_i \subset{\mathbb{R}^n}$ is compact and convex. Each
		function $f_i(x_i,y)$ is continuously differentiable in
		$(x_i,y)$ over some open set containing the set $K_i\times \bar K$,
		while each function $x_i\mapsto f_i(x_i,\bar x)$ is convex over the set $K_i$. 
\end{assumption}

Under Assumption~\ref{assump:set_func}, the (sufficient) equilibrium
conditions of the Nash game in~\eqref{eqn:problem} 
can be specified as a variational inequality problem 
VI$(K,\phi)$ (cf.~\cite{Facchinei_Pang03}). Recall that
	VI$(K,\phi)$ requires determining a point $x^*\in K$
such that 
\[(x-x^*)^T\phi(x^*) \jk{\geq}  0 \qquad\hbox{for all }x\in K,\]
where
\begin{equation}\label{eqn:svikf}
 \phi(x) \triangleq
\left( \begin{array} {c}
       \nabla_{x_1} f_1(x_1,\bar x) \\
       \vdots \\
       \nabla_{x_N} f_N(x_N, \bar x)
      \end{array}
\right), \qquad K = \prod_{i=1}^{N} K_i,
\end{equation}
with $x\triangleq(x_1^T,\ldots,x_N^T)^T$, $x_i\in K_i$ for all
$i$, and $\bar x$ defined by~\eqref{eqn:aggreg}.  Note that, by
Assumption~\ref{assump:set_func}, the set $K$ is a compact and
convex set in $\mathbb{R}^{nN}$, and the mapping  $\phi: K \to\mathbb{R}^{nN}$
is continuous. To emphasize the particular form of the mapping $\phi$, 
we define $F_i(x_i, \bar x)$ as follows:
\begin{align}\label{eqn:compfi}
F_i(x_i,\bar x)=\nabla_{x_i} f_i(x_i,\bar x)\qquad\hbox{for all }i=1,\ldots,N.
\end{align}
The mapping $F(x,u)$ is given by 
\begin{equation}\label{eqn:svinot}
 F(x,u) \triangleq
\left( \begin{array} {c}
       F_1(x_1,u)\cr       
       \vdots \cr
       F_N(x_N, u)
      \end{array}
\right),
\end{equation}
where the component maps $F_i: K_i\times \bar K\to\ren$ are given by~\eqref{eqn:compfi}.
With this notation, we have
\begin{equation}\label{eqn:phimap}
\phi(x)=F(x,\bar x)\qquad\hbox{for all $x\in K$}.\end{equation}

Next, we make an assumption on the mapping $\phi(x)$.
\begin{assumption}\label{ass-strict-mon}
	The mapping $\phi(x)$ is strictly monotone over
	$K$, i.e., 
	$$ (\phi(x) - \phi(x'))^T(x-x') > 0, \quad \hbox{for all }x,x' \in
	K, \us{ \mbox{ where } x \neq x'}.$$
\end{assumption}
Assumption~\ref{assump:set_func} allows us to claim the existence of a
Nash equilibrium, while Assumption~\ref{ass-strict-mon} allows us to
claim the uniqueness of the equilibrium.

\begin{proposition}\label{uniq-equil}
Consider the aggregative Nash game defined in \eqref{eqn:problem}.
Suppose Assumptions~\ref{assump:set_func} and~\ref{ass-strict-mon} hold. 
Then, the game admits a unique Nash equilibrium.
\end{proposition}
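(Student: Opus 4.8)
The plan is to leverage the variational characterization already set up in the excerpt: under Assumption~\ref{assump:set_func} the Nash equilibria of~\eqref{eqn:problem} coincide with the solutions of VI$(K,\phi)$, so it suffices to show that this variational inequality has exactly one solution. First I would make the equivalence precise. Fixing the rival aggregate $\bar x_{-i}$, each player solves a convex program (the convex objective $x_i \mapsto f_i(x_i,\bar x)$ over the compact convex set $K_i$), so the first-order condition $(x_i - x_i^*)^T \nabla_{x_i} f_i(x_i^*,\bar x^*) \ge 0$ for all $x_i \in K_i$ is both necessary and sufficient for $x_i^*$ to be player $i$'s best response. Stacking these $N$ conditions and recalling $\phi(x)=F(x,\bar x)$ from~\eqref{eqn:phimap} yields exactly $(x - x^*)^T\phi(x^*)\ge 0$ for all $x\in K$; hence $x^*$ is a Nash equilibrium if and only if it solves VI$(K,\phi)$.

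For existence, I would invoke the standard existence theorem for finite-dimensional variational inequalities (e.g.\ the Hartman--Stampacchia result in~\cite{Facchinei_Pang03}). The feasible set $K=\prod_{i=1}^{N} K_i$ is a product of compact convex sets, hence compact and convex in $\mathbb{R}^{nN}$, and $\phi$ is continuous there, as noted after~\eqref{eqn:svikf}. A continuous map on a nonempty compact convex set admits at least one VI solution, so at least one Nash equilibrium exists.

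For uniqueness, I would argue by contradiction using Assumption~\ref{ass-strict-mon}. Suppose $x^*\neq x^{**}$ both solve VI$(K,\phi)$. Testing the defining inequality for $x^*$ at the point $x=x^{**}$, and the one for $x^{**}$ at $x=x^*$, gives
\[
(x^{**}-x^*)^T\phi(x^*)\ge 0 \qquad\text{and}\qquad (x^*-x^{**})^T\phi(x^{**})\ge 0.
\]
Adding these and rearranging yields $(x^{**}-x^*)^T\big(\phi(x^{**})-\phi(x^*)\big)\le 0$, which directly contradicts the strict monotonicity inequality $(\phi(x^{**})-\phi(x^*))^T(x^{**}-x^*)>0$ that holds precisely because $x^*\neq x^{**}$. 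Therefore the VI solution, and hence the Nash equilibrium, is unique.

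The routine manipulations (summing the two test inequalities, checking compactness and convexity of the product set) are immediate; the only step requiring genuine care is the equivalence in the first paragraph, where one must confirm that the convexity in Assumption~\ref{assump:set_func} is exactly what upgrades the per-player first-order conditions from necessary to sufficient. Everything downstream is standard monotone-VI theory.
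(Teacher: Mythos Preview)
Your proposal is correct and follows essentially the same approach as the paper: existence of a VI$(K,\phi)$ solution from compactness of $K$ and continuity of $\phi$, and uniqueness from strict monotonicity. The paper simply cites Corollary~2.2.5 and Theorem~2.3.3 of~\cite{Facchinei_Pang03} for these two steps, whereas you spell out the Nash--VI equivalence and the standard contradiction argument for uniqueness; this extra detail is fine but not a different method.
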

\begin{proof}
	By Assumption~\ref{assump:set_func}, the set $K$ is compact and
		$\phi$ is continuous. It follows from Corollary
			2.2.5~\cite{Facchinei_Pang03} that  
	VI$(K,\phi)$  has a solution. By the strict
	monotonicity of $\phi(x)$, VI$(K,\phi)$ has at most one solution
	based on Theorem~2.3.3~\cite{Facchinei_Pang03} and uniqueness
	follows.
	\end{proof}

Strict monotonicity assumptions on the mapping are seen to hold in a range of
practical problem settings, including Nash-Cournot
games~\cite{kannan10online}, rate allocation
problems~\cite{alpcan02game,alpcan03distributed,yin09nash,yin09nash2}, amongst others.  We now state our assumptions on the mappings $F_i$, which are related 
to the coordinate mappings of $\phi$ in~\eqref{eqn:svikf}.
\begin{assumption} \label{assump:Lipschitz}
Each mapping $F_i(x_i,u)$ is uniformly Lipschitz continuous in $u$ over $\bar K$,
  for every fixed $x_{i} \in K_{i}$ i.e., for some $\bar L_{i}>0$ and
  for all $\us{z_1},\us{z_2}\in \bar K$,
  \[\|F_i(x_{i},\us{z_1})-F_i(x_{i}, \us{z_2})\| \leq \bar
  L_i\|\us{z_1}-\us{z_2}\|,\]
 where $\bar K$ is as defined in \eqref{eqn:setsumk}.
\end{assumption}

One would naturally question whether such assumptions are seen to hold
in practical instances of aggregative games. 
We will show in section~\ref{sec:numerics} that the assumptions
are satisfied for the  Nash-Cournot game described in Example~\ref{ex:nash_example}.

Before proceeding, it is worthwhile to reiterate the motivation for the
present work. In the context of continuous-strategy Nash games, when the
mapping $\phi$ satisfies a suitable monotonicity property over $K$, then
a range of distributed projection-based
schemes~\cite{Facchinei_Pang03,alpcan03distributed,alpcan02game,pavel06noncooperative,pavel07extension}
and their regularized variants
schemes~\cite{yin09nash,yin09nash2,kannan10distributed,kannan10online}
can be constructed.  In all of these instances, every agent should be
able to observe the aggregate $\bar x$ of the agent decisions. In this
paper, we assume that this aggregate {\em cannot be observed} and {\em
no central entity exists} that can \us{globally broadcast} this quantity
at any time.  Yet, when agents are connected in some manner, then a
given agent may communicate locally with their neighbors and generate
estimates of the aggregate decisions.  Under this restriction, {\it we
	are interested in designing algorithms for computing an equilibrium
		of an aggregative Nash game}~\eqref{eqn:problem}.

\section{Distributed Synchronous Algorithm}\label{sec:synchron}
In this section we develop a distributed synchronous algorithm for
equilibrium computation of the game in~\eqref{eqn:problem} that
relies on agents constructing an estimate \us{of the aggregate} by {\em mixing} information
	drawn from local neighbors and making a subsequent projection step.
	In Section~\ref{sec:outline}, we describe the scheme and provide
	some preliminary results in Section~\ref{sec:prelim}.  This section
	concludes in Section~\ref{sec:convalgo} with an analysis of the
	convergence of the proposed scheme.

\subsection{Outline of Algorithm} \label{sec:outline}

Our algorithm equips each agent in the network with a protocol
that mandates that every agent exchange information with its
neighbors, and subsequently update its decision and the estimate
	of the aggregate decisions, simultaneously. We employ a synchronous
	time model which can contend with a time varying connectivity graph.
	Consequently, in this section we consider a time varying network to
	model agent's communications in time. More specifically, let
	$\Escr_k$ be the set of underlying undirected edges between agents
	and let $\Gscr_k=(\Nscr,\Escr_k)$ denote the connectivity graph at
	time $k.$ Let $\Nscr_i(k)$ denote the set of agents who are
	immediate neighbors of agent $i$ at time $k$ that can
	send information to $i$, assuming that $i \in \Nscr_i(k)$ for all
		$i\in \Nscr$ and all $k\ge0$. Mathematically,
$\Nscr_i(k)$ can be expressed as: 
\[\Nscr_i(k)= \{j \, : \{i,j\} \in \Escr_k\}.\]
 We make the following assumption on the graph $\Gscr_k=(\Nscr,\Escr_k)$.
\begin{assumption}\label{assump:connectivity}
 There exists an integer $Q \geq 1$ such that the graph $(\Nscr,\bigcup_{\ell=1}^{Q}
 \Escr_{\ell+k})$ is connected for all $k\ge0$.
\end{assumption}
This assumption ensures that the intercommunication intervals are bounded for
agents that communicate directly; i.e., every agent sends information to
each of its neighboring agents at least once every $Q$ time intervals.
This assumption has been commonly used in distributed algorithms on networks, 
starting with~\cite{Tsitsiklis84}.

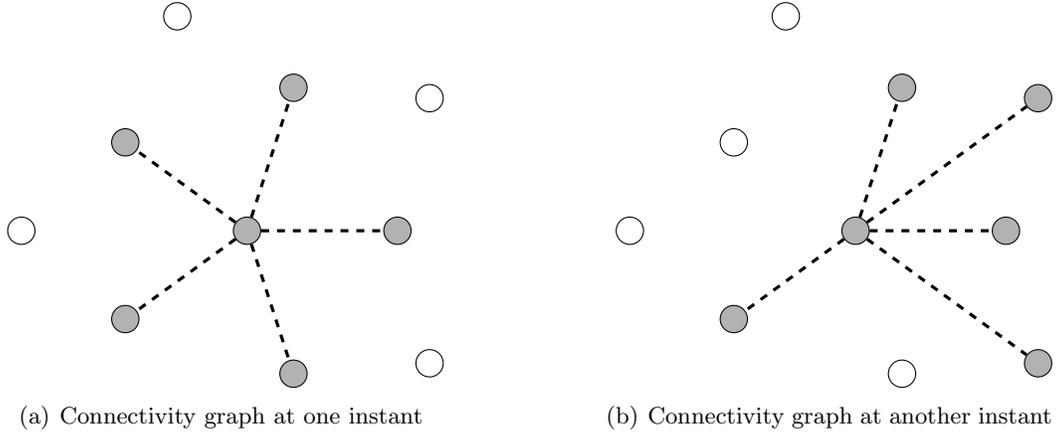
\begin{figure}[ht]
\centering
\subfigure[Connectivity graph at one instant]{
\begin{tikzpicture}[scale=2]
\tikzstyle{vertex}=[draw,shape=circle];
\tikzstyle{selected}=[vertex,fill=black];
\tikzstyle{neighbor}=[vertex,fill=black!30];
\tikzstyle{nonneighbor}=[vertex,fill=black!0];

\tikzstyle{edge} = [draw,thick,-]
\tikzstyle{selected edge} = [draw,line width=1.2pt,dashed,black]
\tikzstyle{ignored edge} = [draw,line width=1.2pt,dashed,black]

\path[selected] (0:0cm)    node[neighbor] (v0) {};
\path (0:1cm)    node[neighbor] (v1) {};
\path (72:1cm)   node[neighbor] (v2) {};
\path (2*72:1cm) node[neighbor] (v3) {};
\path (3*72:1cm) node[neighbor] (v4) {};
\path (4*72:1cm) node[neighbor] (v5) {};

\node[nonneighbor] (v6) at (36:1.5cm) {};
\node[nonneighbor] (v7) at (108:1.5cm) {};
\node[nonneighbor] (v8) at (180:1.5cm) {};
\node[nonneighbor] (v9) at (324:1.5cm) {};

\path[selected edge] (v0) -- (v1);
\path[ignored edge] (v0) -- (v2);
\path[ignored edge] (v0) -- (v3);
\path[ignored edge] (v0) -- (v4);
\path[ignored edge] (v0) -- (v5);
\end{tikzpicture}}
\hspace{1.9cm}
\subfigure[Connectivity graph at another instant]{
\begin{tikzpicture}[scale=2]
\tikzstyle{vertex}=[draw,shape=circle];
\tikzstyle{selected}=[vertex,fill=black];
\tikzstyle{neighbor}=[vertex,fill=black!30];
\tikzstyle{nonneighbor}=[vertex,fill=black!0];

\tikzstyle{edge} = [draw,thick,-]
\tikzstyle{selected edge} = [draw,line width=1.2pt,dashed,black]
\tikzstyle{ignored edge} = [draw,line width=1.2pt,dashed,black]

\path[selected] (0:0cm)    node[neighbor] (v0) {};
\path (0:1cm)    node[neighbor] (v1) {};
\path (72:1cm)   node[neighbor] (v2) {};
\path (2*72:1cm) node[nonneighbor] (v3) {};
\path (3*72:1cm) node[neighbor] (v4) {};
\path (4*72:1cm) node[nonneighbor] (v5) {};

\node[neighbor] (v6) at (36:1.5cm) {};
\node[nonneighbor] (v7) at (108:1.5cm) {};
\node[nonneighbor] (v8) at (180:1.5cm){};
\node[neighbor] (v9) at (324:1.5cm) {};

\path[selected edge] (v0) -- (v1);
\path[ignored edge] (v0) -- (v2);
\path[ignored edge] (v0) -- (v6);
\path[ignored edge] (v0) -- (v4);
\path[ignored edge] (v0) -- (v9);
\end{tikzpicture}}
\caption{A depiction of an undirected communication network.}
\label{fig:diffusion}
\end{figure}

Due to incomplete information at any point, an agent only has an estimate of
$\bar x$ in contrast to the actual $\bar x.$ We describe how an agent
may build this estimate. Let $x_i^k$ be the iterate and $v_i^k$ be the estimate of
the average of the decisions $x_1^k,\ldots,x_N^k$ for agent $i$ at the end of the $k$th iteration.
At the beginning of the $(k+1)$st iteration, agent $i$ receives the estimates $v_j^k$ from 
its neighbors $j\in \Nscr_i(k+1)$. Using this information, agent $i$
aligns its intermediate estimate according to the following rule: 
\begin{equation}
\label{eqn:estimate_mixing}
\hat v_i^k= \sum_{j \in \Nscr_i(k)} w_{ij}(k) v^k_j, 
\end{equation}
where $w_{ij}(k)$ is the nonnegative weight that agent $i$ assigns to agent
$j$'s estimate. By specifying $w_{ij}=0$ for $j \not \in \Nscr_i(k)$ we can write:
\[
\hat v_i^k= \sum_{j=1}^{N} w_{ij}(k) v_j^k \qquad \mbox{with} \qquad  v_j^0
= x_j^0 \mbox{ for all } j=1,\ldots,N.
\] 
Using this aligned average
estimate $\hat v_i^k$ and its own iterate $x_i^k$, agent $i$ updates its
iterate and average estimate as follows:
\begin{align}\label{eqn:x_algo}
 x_i^{k+1} & :=\Pi_{K_i}[x_i^k-\alpha_k F_i(x_i^k,N\hat v^k_i)], \\
\label{eqn:estimate_update}
 v_i^{k+1} & := {\hat v_i^k} + x_i^{k+1}-x_i^k,
\end{align}
where $\alpha_k$ is the stepsize, $\Pi_{K_i}\us{(u)}$ denotes the
Euclidean projection \us{of a vector $u$} onto the set $K_i$ and 
$F_i$ is as defined in~\eqref{eqn:compfi}. 
The quantity $N\hat v^k_i$ in~\eqref{eqn:x_algo} is the aggregate estimate that
agent $i$ uses instead of the true estimate $\sum_{i=1}^N x_i^k$ of the agent decisions at time $k$.
Under suitable conditions on the agents weights $w_{ij}(k)$ and the stepsize $\alpha_k$, 
the iterate vector $(x_1^k,\ldots,x_N^k)$ can converge to a Nash equilibrium point 
$(x_1^*,\ldots,x_N^*)$ and the estimates $N\hat v^k_i$ in~\eqref{eqn:x_algo} will converge
to the true aggregate value $\sum_{i=1}^N x_i^*$ at the equilibrium. These assumptions are 
given below.

\begin{assumption} \label{assump:weight} Let $W(k)$ be the weight matrix with entries $w_{ij}(k)$.
 For all $i \in \Nscr$ and all $k\ge0$, the following hold:
\begin{enumerate}[(i)]
\item $w_{ij}(k) \geq \delta$ for all $j\in \Nscr_i(k)$ and $w_{ij}(k)=0$ for $j \not \in \Nscr_i(k)$;
\item $\sum_{j=1}^{N} w_{ij}(k) =1$ for all $i$;
\item $\sum_{i=1}^{N} w_{ij}(k) =1$ for all $j.$
\end{enumerate}
\end{assumption}
Assumption~\ref{assump:weight} essentially requires every player to assign a positive  weight to the information received from its neighbor. Following Assumption~\ref{assump:weight} (ii)-(iii), the matrix $W(k)$ is doubly stochastic.
We point the reader to~\cite{nedic09distributed} for the
examples and a detailed discussion of the weights satisfying 
the preceding assumption.

\begin{assumption}\label{assump:step_diffusion}
The stepsize $\alpha_k$ is chosen such that the following hold:
\begin{enumerate}[(i)]
\item The sequence $\{\alpha_{k}\}$ is monotonically {non-increasing i.e., 
$\alpha_{k+1} \le \alpha_k$ for all $k$;}
\item $\sum_{{k=0} }^\infty \alpha_k = \infty$;
\item $\sum_{{k=0} }^\infty \alpha_k^2 < \infty.$
\end{enumerate}
\end{assumption}
Such an assumption is satisfied for a stepsize of the form
{$\alpha_{k} = (k+1)^{-b}$} where $0.5 < b \leq 1$.

\subsection{Preliminary Results}\label{sec:prelim}
We next provide some auxiliary results for the weight matrices and the estimates generated by the method. We introduce
the transition matrices $\Phi(k,s)$ from time $s$ to $k>s$,
	as follows:
\[\Phi(k,s)=W(k)W(k-1)\cdots W(s+1)W(s)\qquad\hbox{for } 0\le s<k,\] 
where $\Phi(k,k)={W(k)}$ for all $k.$ Let $[\Phi(k,s)]_{ij}$ denote
the $(i,j)$th entry of the matrix $\Phi(k,s),$ and let $\mathbf{1} \in
\mathbb{R}^{N}$ be the column vector with all entries equal to 1.  We
next state a result on the convergence properties of the matrix
$\Phi(k,s).$ The result can be found in \cite{aa_dist_avg_2008}
(Corollary 1).

\begin{lemma}[\cite{Tsitsiklis84} Lemma 5.3.1]\label{lemma:weight}
 Let {Assumptions}~\ref{assump:connectivity} and \ref{assump:weight}
 hold. Then \us{the following hold:}
\begin{enumerate}[(i)]
 \item $\lim_{k \to \infty} \Phi(k,s) = \frac{1}{N} \mathbf{1}\mathbf{1}^T$ for all $s\ge0.$
 \item The convergence rate of $\Phi(k,s)$ is geometric; specifically, we have
$\left|[\Phi(k,s)]_{ij}-\frac{1}{N}\right| \leq \theta \beta^{k-s}$ for all $k\ge s\ge0$ 
and for all $i$ and $j$,
where $\theta=(1-\frac{\delta}{4N^2})^{-2}$ and $\beta=(1-\frac{\delta}{4N^2})^{\frac{1}{Q}}.$
\end{enumerate}
\end{lemma}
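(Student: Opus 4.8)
The plan is to derive part (i) as an immediate consequence of the quantitative estimate in part (ii): since $\beta\in(0,1)$, letting $k\to\infty$ in (ii) forces every entry $[\Phi(k,s)]_{ij}$ to $\tfrac1N$, which is exactly $\lim_{k}\Phi(k,s)=\tfrac1N\mathbf{1}\mathbf{1}^T$. So the whole lemma reduces to the geometric bound. Before attacking it I would record the structural fact that each $\Phi(k,s)$ is doubly stochastic: by Assumption~\ref{assump:weight}(ii)--(iii) every $W(\ell)$ is doubly stochastic, and a product of doubly stochastic matrices is doubly stochastic, so $\mathbf{1}^T\Phi(k,s)=\mathbf{1}^T$ and $\Phi(k,s)\mathbf{1}=\mathbf{1}$. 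In particular each column of $\Phi(k,s)$ is a probability vector with mean $\tfrac1N$, so $\tfrac1N$ always lies between its smallest and largest entry.

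Next, fix a column index $j$ and set $M_j(k):=\max_i[\Phi(k,s)]_{ij}$ and $m_j(k):=\min_i[\Phi(k,s)]_{ij}$. From $\Phi(k+1,s)=W(k+1)\Phi(k,s)$ each entry $[\Phi(k+1,s)]_{ij}=\sum_\ell[W(k+1)]_{i\ell}[\Phi(k,s)]_{\ell j}$ is a convex combination (row-stochasticity of $W(k+1)$) of the entries $\{[\Phi(k,s)]_{\ell j}\}_\ell$. Hence $M_j(k)$ is non-increasing, $m_j(k)$ is non-decreasing, and the spread $d_j(k):=M_j(k)-m_j(k)$ is non-increasing in $k$. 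Since the $j$th column sums to $1$ and has $N$ entries, $\tfrac1N\in[m_j(k),M_j(k)]$, whence $\bigl|[\Phi(k,s)]_{ij}-\tfrac1N\bigr|\le d_j(k)$ for all $i$. It therefore suffices to prove that $d_j(k)$ decays geometrically with rate $\beta$.

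The heart of the argument, and the step I expect to be the main obstacle, is a \emph{block contraction} estimate: over each window of $Q$ consecutive steps the spread shrinks by a fixed factor,
\[
d_j(k+Q)\ \le\ \Bigl(1-\tfrac{\delta}{4N^2}\Bigr)\,d_j(k).
\]
This is exactly where Assumptions~\ref{assump:connectivity} and~\ref{assump:weight}(i) enter. Assumption~\ref{assump:weight}(i) supplies a uniform lower bound $\delta$ on every active weight together with positive self-weights (since $i\in\Nscr_i(k)$), so at each step a definite fraction of a neighbour's value is transferred and, crucially, a node never discards its own accumulated value between the time-slices of the window. Assumption~\ref{assump:connectivity} guarantees that the edges appearing across the $Q$ steps form a connected graph, so the index attaining $m_j$ and the index attaining $M_j$ are joined by a chain of mixing events inside the window. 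Propagating the lower bound $\delta$ along this chain (of length at most $N$), while using the self-weights to carry the transferred mass forward to the end of the window, forces a guaranteed reduction of the gap and yields the explicit factor $1-\tfrac{\delta}{4N^2}$. Making this propagation quantitative and uniform in $k$ — tracking exactly how much mass crosses the worst cut of the window graph — is the delicate part.

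Finally I would iterate the block estimate. Applying it over the $\lfloor(k-s)/Q\rfloor$ full windows contained in $[s,k]$ and using the trivial bound $d_j(s)\le1$ gives $d_j(k)\le(1-\tfrac{\delta}{4N^2})^{\lfloor(k-s)/Q\rfloor}$. Replacing the floor by $(k-s)/Q$ turns this into $d_j(k)\le\theta\,\beta^{k-s}$ with $\beta=(1-\tfrac{\delta}{4N^2})^{1/Q}$, the prefactor $\theta=(1-\tfrac{\delta}{4N^2})^{-2}$ absorbing the slack from the floor estimate and the incomplete window at the end of $[s,k]$. Combined with $\bigl|[\Phi(k,s)]_{ij}-\tfrac1N\bigr|\le d_j(k)$ this is precisely (ii), and (i) follows by sending $k\to\infty$.
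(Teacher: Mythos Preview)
The paper does not supply its own proof of this lemma: it is stated as a quotation of Lemma~5.3.1 in~\cite{Tsitsiklis84} (with a pointer also to Corollary~1 of~\cite{aa_dist_avg_2008}) and is invoked as a known result. So there is nothing in the paper to compare your argument against beyond the citation itself.

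That said, your sketch follows the classical route of the Tsitsiklis proof: doubly-stochastic structure of $\Phi(k,s)$, monotone contraction of the column spread $d_j(k)=M_j(k)-m_j(k)$ under convex combinations, a block-contraction estimate over windows of length $Q$ driven by connectivity plus the uniform lower bound $\delta$ on active weights, and iteration over $\lfloor (k-s)/Q\rfloor$ blocks with the prefactor $\theta$ absorbing the floor slack. The one place where your write-up is genuinely incomplete is the block-contraction step: you correctly flag it as ``the delicate part'' but do not actually carry out the propagation argument that produces the specific factor $1-\tfrac{\delta}{4N^2}$. Since the paper treats the lemma as a black-box citation, this level of sketch is consistent with how the result is used; if you want a self-contained proof, that step is where the real work lies and you would need to fill it in (or simply cite~\cite{Tsitsiklis84} as the paper does).
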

Next, we state some results which will allow us to claim the convergence of the algorithm.
These results involve the average  of the estimates $v^k_i,i\in\Nscr$,
	  \us{defined by $y^k$:} 
\begin{equation}\label{eqn:true_agg1}
 y^k = {\frac{1}{N} \sum_{i=1}^N v_i^k} \quad \mbox{for all } k \geq 0.
\end{equation}
As we \us{proceed to show}, $y^k$ will play a key role in establishing
the convergence of the iterates produced by the algorithm
in~\eqref{eqn:x_algo}--\eqref{eqn:estimate_update}.  One important
property of $y^k$ is that we have $y^k= \frac{1}{N} \sum_{j=1}^N x^k_j$
for all $k \geq 0.$ Thus, $y^k$ not only captures the average belief of
the agents in the network but it also represents the true average
information.  This property of the true average $y^k$ has been shown
in~\cite{ram2012} within the proof of Lemma 5.2 for a different setting,
	and it is given in the following lemma for sake of clarity.

\begin{lemma}\label{lemma:true_agg} 
Let $W(k)$ be such that  $\sum_{j=1}^N [W(k)]_{ji}=1$ for every $i$ and
$k$.  Then,  $y^k = \frac{1}{N} \sum_{i=1}^N x^k_i$ for all $k\ge0$,
	where $y^k$ is defined by~\eqref{eqn:true_agg1}.
\end{lemma}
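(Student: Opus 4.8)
The plan is to proceed by induction on the iteration index $k$, showing that the gap $e^k := y^k - \frac{1}{N}\sum_{i=1}^N x_i^k$ vanishes for every $k \ge 0$. The base case is immediate: since the algorithm initializes $v_i^0 = x_i^0$ for all $i$, definition~\eqref{eqn:true_agg1} gives $y^0 = \frac{1}{N}\sum_{i=1}^N v_i^0 = \frac{1}{N}\sum_{i=1}^N x_i^0$, so that $e^0 = 0$.

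For the inductive step, the key computation is to average the estimate-update rule~\eqref{eqn:estimate_update} over all agents. Summing $v_i^{k+1} = \hat v_i^k + x_i^{k+1}-x_i^k$ over $i=1,\ldots,N$ and dividing by $N$ yields
\[
 y^{k+1} = \frac{1}{N}\sum_{i=1}^N \hat v_i^k + \frac{1}{N}\sum_{i=1}^N x_i^{k+1} - \frac{1}{N}\sum_{i=1}^N x_i^k.
\]
The crucial observation is that the mixing step preserves the aggregate of the estimates. Substituting $\hat v_i^k = \sum_{j=1}^N w_{ij}(k) v_j^k$ and interchanging the order of summation gives $\sum_{i=1}^N \hat v_i^k = \sum_{j=1}^N \bigl(\sum_{i=1}^N w_{ij}(k)\bigr) v_j^k$, where the inner sum $\sum_{i=1}^N w_{ij}(k) = \sum_{i=1}^N [W(k)]_{ij}$ equals one by the column-stochasticity hypothesis of the lemma. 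Hence $\sum_{i=1}^N \hat v_i^k = \sum_{j=1}^N v_j^k = N y^k$, and the displayed identity simplifies to $y^{k+1} = y^k + \frac{1}{N}\sum_{i=1}^N x_i^{k+1} - \frac{1}{N}\sum_{i=1}^N x_i^k$.

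Finally, I would invoke the inductive hypothesis $y^k = \frac{1}{N}\sum_{i=1}^N x_i^k$ to cancel the first and last terms, leaving $y^{k+1} = \frac{1}{N}\sum_{i=1}^N x_i^{k+1}$ and closing the induction; equivalently, the same computation shows $e^{k+1} = e^k$ directly, so the gap is invariant and identically zero given $e^0 = 0$. The argument is elementary, and the only real subtlety — the main point to get right — is the bookkeeping in the interchange of summation order, where one must apply the column-stochasticity of $W(k)$ (each column summing to one), which is precisely the hypothesis stated in the lemma and is guaranteed by the doubly stochastic structure of $W(k)$ from Assumption~\ref{assump:weight}, rather than the row-stochasticity that makes each $\hat v_i^k$ a genuine weighted average in~\eqref{eqn:estimate_mixing}.
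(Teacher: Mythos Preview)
Your proof is correct and follows essentially the same route as the paper: an induction on $k$, using the column-stochasticity hypothesis to show that the mixing step preserves the aggregate of the $v_i^k$'s, and then using the update rule~\eqref{eqn:estimate_update} to propagate the identity. The only cosmetic difference is that the paper phrases the induction in terms of the sum $\sum_{j=1}^N v_j^k = \sum_{j=1}^N x_j^k$ rather than the average $y^k$, but this is the same argument up to division by $N$.
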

\begin{proof}
It suffices to show that for all $k\ge0$,
\begin{align}\label{eqn:relo1}
\sum_{j=1}^{N} v^k_j = \sum_{j=1}^{N} x^k_j. 
\end{align}
We show this by induction on $k$. 
For $k=0$ relation \eqref{eqn:relo1} holds trivially, as we have initialized the beliefs with
$v_j^0 = x_j^0$ for all $j$. 
Assuming relation \eqref{eqn:relo1} holds for $k-1,$ as the induction step, we have
\begin{align*}
\sum_{j=1}^{N} {v_j^k} = & \sum_{j=1}^{N} (\hat v_j^{k-1}+x_j^{k}-x_j^{k-1}) \\
     = & \sum_{j=1}^N \sum_{i=1}^N [W(k-1)]_{ji} v_i^{k-1} +\sum_{j=1}^{N} (x_j^{k}-x_j^{k-1}) \\
     =& \sum_{i=1}^N v_i^{k-1}+\sum_{j=1}^{N} (x_j^{k}-x_j^{k-1}),
\end{align*}
where the first equality follows from \eqref{eqn:estimate_update},
	the second inequality is a consequence of the mixing relationship
		articulated by \eqref{eqn:estimate_mixing}, and  the 
	last equality follows from $\sum_{j=1}^N [W(k)]_{ji}=1$ for every $i$ and $k$. 
Furthermore, using the induction hypothesis, we have 
$\sum_{j=1}^{N} (x_j^{k}-x_j^{k-1})= \sum_{j=1}^N x_j^{k} -\sum_{j=1}^{N} v^{k-1}_j,$
thus implying that
$\sum_{j=1}^{N} v^k_j =\sum_{j=1}^N x_j^{k}.$
\end{proof}

As a consequence of Lemma~\ref{lemma:true_agg},
   Assumptions~\ref{assump:set_func} and ~\ref{assump:Lipschitz}, we have the following result which will be often used in the
sequel.
\begin{lemma}\label{cor:fibound} 
\us{Let $W(k)$ be such that  $\sum_{j=1}^N [W(k)]_{ji}=1$ for every $i$ and
$k$.} 
Also, let Assumptions~\ref{assump:set_func} and~\ref{assump:Lipschitz} hold.
Then, there exists a constant $C$ such that 
\[\| F_i(x_i^{k}, Ny^{k})\|\le C, \qquad \| F_i(x_i^k, N\hat v_i^{k})\| \le C
\qquad\hbox{for all $i$ and $k\ge0$.}\]
\end{lemma}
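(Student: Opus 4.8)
The plan is to treat the two inequalities separately, exploiting that every decision iterate lives in a compact set while $Ny^k$ and $N\hat v_i^k$ play different roles. First I would record that $x_i^k\in K_i$ for all $k$, since each update \eqref{eqn:x_algo} is a Euclidean projection onto $K_i$ and $x_i^0\in K_i$. By Lemma~\ref{lemma:true_agg} we have $Ny^k=\sum_{j=1}^N x_j^k$, and because each $x_j^k\in K_j$ while $\bar K=\sum_{j=1}^N K_j$, this gives $Ny^k\in\bar K$; hence $(x_i^k,Ny^k)\in K_i\times\bar K$ for all $i,k$. Under Assumption~\ref{assump:set_func} the map $F_i=\nabla_{x_i}f_i$ is continuous and $K_i\times\bar K$ is compact (a Minkowski sum of compact sets is compact), so $F_i$ is bounded there. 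Taking $C_1:=\max_i\sup_{(z,u)\in K_i\times\bar K}\|F_i(z,u)\|<\infty$ immediately yields $\|F_i(x_i^k,Ny^k)\|\le C_1$, which is the first inequality and uses exactly the cited ingredients (Lemma~\ref{lemma:true_agg} and Assumption~\ref{assump:set_func}).

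For the second inequality I would compare $N\hat v_i^k$ against $Ny^k$ and invoke Lipschitz continuity of $F_i$ in its second argument (Assumption~\ref{assump:Lipschitz}):
\[\|F_i(x_i^k,N\hat v_i^k)\|\le\|F_i(x_i^k,Ny^k)\|+\bar L_i\,N\,\|\hat v_i^k-y^k\|\le C_1+\bar L_i\,N\,\|\hat v_i^k-y^k\|.\]
Everything then reduces to bounding the disagreement $\|\hat v_i^k-y^k\|$ by a constant independent of $i$ and $k$.

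To control the disagreement I would stack the estimate recursion \eqref{eqn:estimate_mixing}--\eqref{eqn:estimate_update} and unroll it to get $v^k=\Phi(k-1,0)v^0+\sum_{s=1}^{k}\Phi(k-1,s)(x^s-x^{s-1})$, with the convention $\Phi(k-1,k):=I$. Subtracting the common average — and using Lemma~\ref{lemma:true_agg} to identify $\tfrac1N\mathbf{1}\mathbf{1}^Tv^k$ with the matrix whose rows all equal $y^k$ — the $i$th deviation becomes $v_i^k-y^k=\sum_j([\Phi(k-1,0)]_{ij}-\tfrac1N)x_j^0+\sum_{s=1}^{k}\sum_j([\Phi(k-1,s)]_{ij}-\tfrac1N)(x_j^s-x_j^{s-1})$. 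Here the increments are uniformly bounded, $\|x_j^s-x_j^{s-1}\|\le\mathrm{diam}(K_j)=:D$, because both endpoints lie in $K_j$; crucially this needs \emph{no} prior bound on $F_i$, so there is no circularity. Meanwhile Lemma~\ref{lemma:weight}(ii) supplies the geometric decay $|[\Phi(k-1,s)]_{ij}-\tfrac1N|\le\theta\beta^{\,k-1-s}$. Summing the geometric series $\sum_s\beta^{\,k-1-s}\le(1-\beta)^{-1}$, with the $s=0$ term bounded by compactness of $K_j$ and the $s=k$ term bounded directly by $2D$, produces $\|v_i^k-y^k\|\le B$ with $B$ independent of $k$; the convexity of the weights (their row sums are $1$) then gives $\|\hat v_i^k-y^k\|\le\max_j\|v_j^k-y^k\|\le B$. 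Substituting back, $C:=C_1+N(\max_i\bar L_i)B$ serves for both inequalities.

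I expect this disagreement estimate to be the main obstacle. The naive bound from the same unrolling is $\|v_i^k\|\le\mathrm{const}+kD$, which grows linearly in $k$, so uniform-in-$k$ boundedness is not automatic: it hinges entirely on the geometric mixing of Lemma~\ref{lemma:weight} — hence on the standing connectivity (Assumption~\ref{assump:connectivity}) and doubly-stochastic weight (Assumption~\ref{assump:weight}) hypotheses — cancelling the accumulation of the bounded increments against the running average that is absorbed into $y^k$. A secondary subtlety is that $N\hat v_i^k$ need not lie in $\bar K$, so the application of Assumption~\ref{assump:Lipschitz} above is justified only because the disagreement bound confines $N\hat v_i^k$ to a fixed bounded neighborhood of $\bar K$ on which the Lipschitz estimate can be taken to hold.
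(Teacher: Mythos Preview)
Your first inequality is exactly the paper's argument. For the second, you and the paper both open with the triangle inequality and Assumption~\ref{assump:Lipschitz}, reducing to a bound on $\|\hat v_i^k-y^k\|$; but the two arguments then diverge sharply. The paper dispatches this in one line by introducing $\hat K:=\mathrm{conv}\bigl(\bigcup_i K_i\bigr)$, asserting that $\hat v_i^k,y^k\in\hat K$ for all $k$, and invoking compactness of $\hat K$---no recursion unrolling, no mixing rates. Your route through Lemma~\ref{lemma:weight} is far heavier and, as you yourself flag, imports Assumptions~\ref{assump:connectivity} and~\ref{assump:weight}, which are \emph{not} among the stated hypotheses of Lemma~\ref{cor:fibound}; so strictly speaking you prove a weaker result than the lemma claims. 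On the other hand, your caution is not misplaced: the paper's one-liner is itself suspect, since the update $v_i^{k+1}=\hat v_i^k+x_i^{k+1}-x_i^k$ is not a convex combination and need not preserve membership in $\hat K$ (take $N=2$, $K_1=K_2=[0,1]$, full averaging $W(0)=\tfrac12\mathbf{1}\mathbf{1}^T$, $x^0=(0,1)$, $x^1=(1,0)$; then $v^1=(3/2,-1/2)\notin\hat K^2$, and with $W(1)=I$ also $\hat v^1\notin\hat K^2$). So your instinct that bounding the disagreement is the real obstacle, and your worry about applying Assumption~\ref{assump:Lipschitz} when $N\hat v_i^k\notin\bar K$, are both legitimate; the paper's shortcut avoids the work you did but rests on an unjustified containment.
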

\begin{proof}
By Lemma~\ref{lemma:true_agg}, we have $Ny^{k}=\sum_{i=1}^N x_i^k=\bar x_k\in \bar K$, where $\bar K$ 
is compact since each $K_i$ \us{is} compact (Assumption~\ref{assump:set_func}). Since each $F_i$ is continuous 
over $K_i\times \bar K$, the first inequality follows. 
To show that $\{F_i(x_i^k, N\hat v_i^{k})\}$ is bounded,
we write 
\[\| F_i(x_i^{k}, N\hat v_i^{k})\|\le \| F_i(x_i^{k}, N\hat v_i^{k})-F_i(x_i^{k}, Ny^{k})\|
+ \| F_i(x_i^{k}, Ny^{k})\|.\]
Using the Lipschitz property of $F_i$ of Assumption~\ref{assump:Lipschitz},
we obtain
\[\| F_i(x_i^{k}, N\hat v_i^{k})\|\le \bar L_i N\|\hat v_i^{k} - y^{k}\|+ \| F_i(x_i^{k}, Ny^{k})\|.\]
Let $\hat K$ be the convex hull of the union set $\cup_{i}K_i$. Note that $\hat v^k_i,y^k\in \hat K$ 
for all $k$ and that $\hat K$ is compact (since each $K_i$ is compact).\footnote{Though $y^k \in \bar K,$ we cannot claim the same for $\hat v^k_i$.}
Thus, $\{\|\hat v_i^{k} - y^{k}\|\}$ is bounded. As already established, $\{F_i(x_i^{k}, Ny^{k})\}$
is also bounded, implying that $\{F_i(x_i^k, N\hat v_i^{k})\}$ is bounded as well.
\end{proof}

In the following lemma, we establish \us{an error bound on} the norm
$\|y^k-\hat v^k_i\|$ which \us{plays an} important role in our analysis.

\begin{lemma}\label{lemma:yk_vk_diffusion}
Let Assumptions~\ref{assump:set_func}--\ref{assump:weight}
hold, and let $y^k$ be defined by~\eqref{eqn:true_agg1}. Then, we have 
\[
\|y^k - \hat v_i^k\|  \leq \theta \beta^{k}M 
+\theta NC\sum_{s=1}^{k} \beta^{k-s}\alpha_{s-1}
\qquad \mbox{for all $i\in\Nscr$ and all $k\ge1$}, 
\]
where  $\hat v^k_i$ is defined in~\eqref{eqn:estimate_mixing},
$\theta=(1-\frac{\delta}{4N^2})^{-2}$, $\beta=(1-\frac{\delta}{4N^2})^{\frac{1}{Q}}$,
$M=\sum_{j=1}^N\max_{x_j\in K_j} \|x_j\|$ and $C$ \us{denotes the bound} in \us{Lemma}~\ref{cor:fibound}.
\end{lemma}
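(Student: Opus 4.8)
The plan is to unroll the two coupled recursions for the estimates into a closed form driven by the transition matrices $\Phi(k,s)$, subtract off the (mixing-invariant) average $y^k$, and then invoke the geometric decay bound of Lemma~\ref{lemma:weight} together with a per-step bound on the decision increments supplied by Lemma~\ref{cor:fibound}.

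First I would introduce the increment $e_i^k \triangleq x_i^k - x_i^{k-1}$, so that combining the update~\eqref{eqn:estimate_update} with the mixing rule~\eqref{eqn:estimate_mixing} gives, in stacked form (the $i$th row of $V^k$ being $(v_i^k)^T$), the linear recursion $V^k = W(k-1)V^{k-1} + E^k$. Unrolling this from the initialization $V^0$ and then applying the additional mixing step $\hat V^k = W(k)V^k$, using the semigroup identity $W(k)\Phi(k-1,s) = \Phi(k,s)$ and the convention $\Phi(k,k)=W(k)$, yields the componentwise representation
\[
\hat v_i^k = \sum_{j=1}^N [\Phi(k,0)]_{ij}\, v_j^0 + \sum_{s=1}^k \sum_{j=1}^N [\Phi(k,s)]_{ij}\, e_j^s .
\]
Since each $W(k)$ is doubly stochastic, the factor $\tfrac1N\mathbf{1}^T$ is preserved by every $W(\cdot)$, so the identical unrolling of the averaged recursion gives $y^k = \tfrac1N\sum_{j=1}^N v_j^0 + \tfrac1N\sum_{s=1}^k\sum_{j=1}^N e_j^s$, consistently with Lemma~\ref{lemma:true_agg}.

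Subtracting the two expressions makes the coefficient of each $v_j^0$ and each $e_j^s$ equal to $\tfrac1N - [\Phi(k,s)]_{ij}$, whose absolute value is at most $\theta\beta^{k-s}$ by part (ii) of Lemma~\ref{lemma:weight}. Taking norms and using the triangle inequality then gives
\[
\|y^k - \hat v_i^k\| \le \theta\beta^{k}\sum_{j=1}^N \|v_j^0\| + \theta\sum_{s=1}^k \beta^{k-s}\sum_{j=1}^N \|e_j^s\| .
\]
For the first sum, $v_j^0 = x_j^0 \in K_j$ yields $\sum_j \|v_j^0\| \le \sum_j \max_{x_j\in K_j}\|x_j\| = M$, producing the leading term $\theta\beta^k M$. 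For the second sum I would use nonexpansiveness of the Euclidean projection together with $x_j^{s-1} = \Pi_{K_j}(x_j^{s-1})$ to write
\[
\|e_j^s\| = \big\|\Pi_{K_j}[x_j^{s-1} - \alpha_{s-1}F_j(x_j^{s-1},N\hat v_j^{s-1})] - \Pi_{K_j}[x_j^{s-1}]\big\| \le \alpha_{s-1}\|F_j(x_j^{s-1},N\hat v_j^{s-1})\| \le C\alpha_{s-1},
\]
the last step being exactly the uniform bound of Lemma~\ref{cor:fibound}. Substituting $\sum_j \|e_j^s\| \le NC\alpha_{s-1}$ gives $\theta NC\sum_{s=1}^k \beta^{k-s}\alpha_{s-1}$ and hence the claimed estimate.

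The main obstacle is purely the bookkeeping in the unrolling: one must match the transition-matrix indices across the extra mixing step so that $\hat v_i^k$ (and not $v_i^k$) appears, and verify that the $s=0$ contribution of the geometric bound lines up with the $\theta\beta^k M$ term while the increments start at $s=1$. Once this representation is pinned down, the two bounds are direct consequences of Lemmas~\ref{lemma:weight} and~\ref{cor:fibound}, with the stepsize monotonicity of Assumption~\ref{assump:step_diffusion} not even needed at this stage.
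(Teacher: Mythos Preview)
Your proposal is correct and follows essentially the same approach as the paper: unroll the mixed-estimate recursion via the transition matrices $\Phi(k,s)$ to obtain the closed form for $\hat v_i^k$, subtract the telescoped expression for $y^k$, apply the geometric bound $|[\Phi(k,s)]_{ij}-1/N|\le\theta\beta^{k-s}$ from Lemma~\ref{lemma:weight}, and finally bound each increment $\|x_j^s-x_j^{s-1}\|\le C\alpha_{s-1}$ via projection nonexpansiveness and Lemma~\ref{cor:fibound}. The only cosmetic differences are that the paper reaches the closed form for $\hat v_i^k$ by first unrolling $v_i^{k+1}$ and then using $\hat v_i^k = v_i^{k+1}-x_i^{k+1}+x_i^k$, whereas you apply the extra mixing step $W(k)$ directly after unrolling $V^k$; both routes give the identical formula.
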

\begin{proof}
Using the definitions of $v^{k+1}_i$ and $\hat v_i^k$ given in 
Eqs.~\eqref{eqn:estimate_update} and~\eqref{eqn:estimate_mixing}, respectively, we have
\begin{align*}
v_i^{k+1} & =\sum_{j=1}^{N} w_{ij}(k)v_j^k +x_i^{k+1}-x_i^k,
\end{align*}
which through an iterative recursion leads to
\begin{align*}
v_i^{k+1}
& =  \sum_{j=1}^{N} w_{ij}(k)\left(  
\sum_{\ell=1}^{N} w_{j\ell}(k-1) v_\ell^{k-1}+  x_j^{k}-x_j^{k-1} \right)
+x_i^{k+1}-x_i^k \\
& = \sum_{\ell=1}^{N} [\Phi(k,k-1)]_{i\ell} v_\ell^{k-1} 
+\sum_{j=1}^{N} [\Phi(k,k)]_{ij}\left( x_j^{k} - x_j^{k-1} \right)+ x_i^{k+1}-x_i^k \\
&=\cdots\\
& = \sum_{\ell=1}^{N} [\Phi(k,0)]_{i\ell}v_\ell^0
+\sum_{s=1}^{k} \left( \sum_{j=1}^{N} [\Phi(k,s)]_{ij}(x_j^s-x_j^{s-1}) \right) + x_i^{k+1}-x_i^k.
\end{align*} 
The preceding relation can be rewritten as:
\begin{equation*}
v_i^{k+1}-x_i^{k+1}+x_i^k = \sum_{\ell=1}^{N} [\Phi(k,0)]_{i\ell}v_\ell^0 
+ \sum_{s=1}^{k} \left( \sum_{j=1}^{N} [\Phi(k,s)]_{ij}(x_j^s-x_j^{s-1}) \right).
\end{equation*}
By the definition of $v^{k+1}_i$ in Eq.~\eqref{eqn:estimate_update}, we have
$\hat v^k_i=v_i^{k+1} -x_i^{k+1}+x_i^k$, 
through which we get
\begin{equation}\label{eqn:x_estimate}
\hat v^k_i = \sum_{\ell=1}^{N} [\Phi(k,0)]_{i\ell}v_\ell^0
+\sum_{s=1}^{k} \left( \sum_{j=1}^{N} [\Phi(k,s)]_{ij}(x_j^s-x_j^{s-1}) \right).
\end{equation}
Now, consider $y^k$ which may be written as follows:
\[y^k=y^{k-1} + (y^k-y^{k-1}) =\cdots=y^0+\sum_{s=1}^k (y^s-y^{s-1}).\]
By Lemma~\ref{lemma:true_agg} we have 
$y^s = \frac{1}{N} \sum_{j=1}^N x^s_j$ for all $s\ge0$, which implies
\begin{align}\label{eqn:y_rel}
y^k=y^0 + \sum_{s=1}^k \sum_{j=1}^N \frac{1}{N}\left( x^s_j - x^{s-1}_j\right)
=\sum_{\ell=1}^{N} \frac{1}{N}v_\ell^0
+\sum_{s=1}^k \sum_{j=1}^N \frac{1}{N}\left( x^s_j - x^{s-1}_j\right),
\end{align}
where the last equality follows by the definition of $y^0$ (see~\eqref{eqn:true_agg1}).

From relations~\eqref{eqn:x_estimate} and \eqref{eqn:y_rel} we have
\begin{align}\label{eqn:ykvk_rel}
\|y^k-\hat v_i^k\|
  & =  \left\|\sum_{\ell=1}^{N} \left( \frac{1}{N} - [\Phi(k,0)]_{i\ell}\right) v_\ell^0
+  \sum_{s=1}^k \sum_{j=1}^N \left(\frac{1}{N} - [\Phi(k,s)]_{ij}\right)\left( x^s_j - x^{s-1}_j\right) 
  \right\| \nonumber\\
  & \leq \sum_{\ell=1}^{N} \left|\frac{1}{N} - [\Phi(k,0)]_{i\ell}\right| \left\|v_\ell^{0}\right\|
  +\sum_{s=1}^{k}\sum_{j=1}^{N} 
  \left|\frac{1}{N}- [\Phi(k,s)]_{ij}\right|\left\| x_j^s-x_j^{s-1} \right\| \cr
&\leq \sum_{\ell=1}^{N} \beta^k \left\|v_\ell^{0}\right\|
  +\sum_{s=1}^{k}\sum_{j=1}^{N} \beta^{k-s}\left\| x_j^s-x_j^{s-1} \right\|
\end{align}
where the last inequality follows from 
$\left| \frac{1}{N}-[\Phi(k,s)]_{ij}\right|\leq \theta \beta^{k-s}$ for all $0\le s\le k$ 
(cf.~Lemma~\ref{lemma:weight}).

Now, we estimate $\|x_i^{s}-x_i^{s-1}\|.$  
From relation~\eqref{eqn:x_algo} we see that for any $s \geq 1$,
\begin{align}\label{eqn:boundxk}
\|x_i^{s}-x_i^{s-1} \|
& = \|\Pi_{K_i}[x_i^{s-1}-\alpha_{s-1} F_i(x_i^{s-1},N\hat v_i^{s-1})]-x_i^{s-1} \| \nonumber \\
& \leq \|x_i^{s-1}-\alpha_{s-1} F_i(x_i^{s-1},N\hat v_i^{s-1})-x_i^{s-1} \| \nonumber \\
& =\alpha_{s-1}\| F_i(x_i^{s-1}, N\hat v_i^{s-1})\|\cr
& \le C\alpha_{s-1},
\end{align}
where the first inequality follows by the non-expansive property of projection map,
and the \us{second} inequality follows by Lemma~\ref{cor:fibound}.
Combining~\eqref{eqn:boundxk} and~\eqref{eqn:ykvk_rel}, 
we have
\begin{align*}
 \|y^k-\hat v_i^k\| 
 \leq  \theta \beta^{k}\sum_{\ell=1}^{N}\|v_\ell^0\|
+ \theta N\sum_{s=1}^{k} \beta^{k-s}\alpha_{s-1} C 
\le \theta \beta^{k}M +\theta NC\sum_{s=1}^{k} \beta^{k-s}\alpha_{s-1},
\end{align*}
where in the last inequality, we use
$v_\ell^0=x_\ell^0\in K_\ell$ and $M=\sum_{\ell=1}^{N}\max_{x_\ell\in K_\ell}\|x_\ell^0\|,$
which is finite since each $K_\ell$ is a compact set (cf.~Assumption~\ref{assump:set_func}).
\end{proof}
From the right hand side of the expression in
Lemma~\ref{lemma:yk_vk_diffusion}, it is apparent that the parameter for
network connectivity, $Q$ (cf.~Assumption~\ref{assump:connectivity})
	determines the rate of convergence of player's estimate of the
	aggregate to the actual aggregate. If the network connectivity is
	\us{poor}, 
$Q$ is large implying $\beta$ is close to 1 resulting in a \us{slower
convergence rate.} 

\subsection{Convergence \us{theory}}\label{sec:convalgo}

In this subsection, under our assumptions, we prove that the sequence produced by the
	 proposed algorithm does indeed converge to the unique Nash
		 equilibrium, which exists by Proposition~\ref{uniq-equil}.
		 Our next proposition provides the main convergence result for the
		 algorithm. Prior to providing this result, we
		 state two lemmas that will be employed in proving the required
		 result, the first being a supermartingale convergence result (see
		 for example \cite[Lemma 11, Pg. 50]{Polyakbook87})
	 and the second being \cite[Lemma 3.1(b)]{ram_2010_stocsub}. 

\begin{lemma}\label{lemma:supermartingale}
Let $V_k, u_k,\beta_k$ and $\gamma_k$ be non-negative random variables adapted
to some $\sigma$-algebra $\mathcal{F}_k$. If almost surely
$\sum_{k=0}^{\infty}u_k < \infty$, $\sum_{k=0}^{\infty}\beta_k<\infty$,
and
\[ \mathbb{E}[V_{k+1} \mid\mathcal{F}_k] \leq  (1 + u_k)V_k-\gamma_k+\beta_k
\quad\hbox{for all }k\ge0,\]
then almost surely {$V_k$ converges} and
$\sum_{k=0}^{\infty}\gamma_k< \infty$.
\end{lemma}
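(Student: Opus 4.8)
The plan is to reduce the given relation to a genuine supermartingale by absorbing the multiplicative factor $(1+u_k)$, and then to establish convergence through a localization (stopping-time) argument that circumvents the fact that the relevant supermartingale is bounded below only by a random, possibly non-integrable, quantity.

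First I would set $b_k = \prod_{j=0}^{k-1}(1+u_j)$ with $b_0 = 1$. Since $u_j \ge 0$ and $\sum_j u_j < \infty$ almost surely, the sequence $\{b_k\}$ is nondecreasing and converges almost surely to a finite limit $b_\infty \in [1,\infty)$; moreover each $b_{k+1}$ depends only on $u_0,\ldots,u_k$ and is therefore $\mathcal{F}_k$-measurable. Dividing the hypothesis by $b_{k+1} = (1+u_k)b_k$ and writing $W_k = V_k/b_k$, $\tilde\gamma_k = \gamma_k/b_{k+1}$, $\tilde\beta_k = \beta_k/b_{k+1}$ gives
\[ \mathbb{E}[W_{k+1} \mid \mathcal{F}_k] \le W_k - \tilde\gamma_k + \tilde\beta_k \qquad \text{for all } k \ge 0, \]
where all three auxiliary sequences are nonnegative and, because $b_{k+1} \ge 1$, we have $\tilde\beta_k \le \beta_k$, so $\sum_k \tilde\beta_k < \infty$ almost surely.

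Next I would form $S_k = W_k + \sum_{j=0}^{k-1}(\tilde\gamma_j - \tilde\beta_j)$ and verify from the displayed inequality that $\mathbb{E}[S_{k+1}\mid\mathcal{F}_k] \le S_k$, so $\{S_k\}$ is a supermartingale. The obstacle here — and the crux of the whole argument — is that $S_k$ is bounded below only by $-\sum_j\tilde\beta_j$, which is finite almost surely but need not be integrable, so the martingale convergence theorem does not apply directly. To fix this I would localize: since each $\tilde\beta_j$ is $\mathcal{F}_j$-measurable, the partial sums are adapted and
\[ \tau_m = \inf\Bigl\{ k \ge 0 : \sum_{j=0}^{k-1}\tilde\beta_j > m \Bigr\} \]
is a stopping time for each $m \in \mathbb{N}$. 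The stopped process $S_{k\wedge\tau_m}$ is then a supermartingale bounded below by the deterministic constant $-m$, hence, after adding $m$, a nonnegative supermartingale, which converges almost surely as $k\to\infty$.

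Finally I would push the localization to the limit. On the event $\{\tau_m = \infty\}$ the partial sums $\sum_{j=0}^{k-1}\tilde\beta_j$ stay bounded by $m$ and converge, so convergence of $S_k$ forces convergence of $W_k + \sum_{j=0}^{k-1}\tilde\gamma_j$; as the latter sum is nondecreasing and $W_k \ge 0$, both $\sum_k \tilde\gamma_k < \infty$ and $\lim_k W_k$ exist on this event. Since $\sum_j\tilde\beta_j < \infty$ almost surely, the events $\{\tau_m=\infty\}$ exhaust the probability space as $m\to\infty$, so these conclusions hold almost surely. Transferring back via $V_k = b_k W_k$ with $b_k\to b_\infty\in[1,\infty)$ yields almost-sure convergence of $V_k$, while $\gamma_k = b_{k+1}\tilde\gamma_k \le b_\infty\tilde\gamma_k$ gives $\sum_k\gamma_k < \infty$ almost surely, completing the proof.
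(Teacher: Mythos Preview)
Your argument is correct and follows the classical route to the Robbins--Siegmund lemma: normalize by $b_k=\prod_{j<k}(1+u_j)$ to remove the multiplicative perturbation, build the compensated process $S_k$, and localize via the stopping times $\tau_m$ so that the supermartingale convergence theorem applies to a process bounded below by a deterministic constant. The measurability bookkeeping (in particular that $b_{k+1}$ and $\tilde\beta_k$ are $\mathcal{F}_k$-measurable, so $\tau_m$ is a stopping time) is handled properly, and the passage from convergence on $\{\tau_m=\infty\}$ to almost-sure convergence is clean.

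There is nothing to compare against in the paper itself: Lemma~\ref{lemma:supermartingale} is stated there without proof and simply attributed to the literature (Polyak's book). So your proposal is not an alternative to the paper's argument but rather a self-contained proof of a result the paper quotes. If anything, you might flag one minor technicality: the lemma as stated does not assume $V_0$ is integrable, so strictly speaking $S_{k\wedge\tau_m}+m$ is a nonnegative \emph{generalized} supermartingale (conditional expectations of nonnegative variables are always defined); the a.s.\ convergence theorem for nonnegative supermartingales still applies in that setting, but it is worth saying so explicitly.
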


\begin{lemma}\cite[Lemma 3.1(b)]{ram_2010_stocsub}
\label{lem:sum_conv}
Let $\{\zeta_k\}$ be a non-negative scalar sequence. If $\sum_{k=0}^\infty \zeta_k <  \infty$ 
and $0 < \beta < 1,$ then 
$\sum_{k=0}^\infty \left(\sum_{s=0}^k \beta^{k-s}\zeta_{s}\right) < \infty.$
\end{lemma}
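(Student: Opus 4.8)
The plan is to recognize the double sum as a discrete convolution of the summable sequence $\{\zeta_s\}$ with the geometric sequence $\{\beta^{k}\}$, and to exploit the fact that every term is nonnegative. Because nonnegative double series may be summed in any order (Tonelli's theorem for series over $\mathbb{N}\times\mathbb{N}$), I would first rewrite the iterated sum as a sum over the index region $\{(k,s):0\le s\le k\}$ and then reorder it so that the outer sum runs over $s$ and the inner sum over $k\ge s$. This reorganization isolates $\zeta_s$ from the geometric factor and reduces the whole estimate to a single geometric series.

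Concretely, I would carry out the interchange and evaluate the inner geometric sum:
\[
\sum_{k=0}^\infty \left(\sum_{s=0}^k \beta^{k-s}\zeta_{s}\right)
= \sum_{s=0}^\infty \zeta_s \sum_{k=s}^\infty \beta^{k-s}
= \sum_{s=0}^\infty \zeta_s \sum_{j=0}^\infty \beta^{j}
= \frac{1}{1-\beta}\sum_{s=0}^\infty \zeta_s,
\]
where the substitution $j=k-s$ gives the inner sum $\sum_{j=0}^\infty \beta^j = (1-\beta)^{-1}$, finite precisely because $0<\beta<1$. The final expression is a constant multiple of $\sum_{s=0}^\infty \zeta_s$, which is finite by hypothesis, so the double sum is finite as claimed.

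The only point requiring care — and what I would flag as the crux rather than a genuine obstacle — is the justification of the reordering of summation. Here the hypothesis that $\{\zeta_k\}$ is \emph{nonnegative} does all the work: for a series of nonnegative terms the iterated and rearranged sums coincide in $[0,\infty]$ regardless of whether they are finite, so no conditional-convergence subtleties arise and the interchange is unconditional. With that interchange granted, the remainder is the routine geometric-series evaluation above, and the bound $\tfrac{1}{1-\beta}\sum_{k}\zeta_k$ that emerges also records an explicit constant, which is convenient for the later application in estimating the accumulated error $\sum_k\|y^k-\hat v_i^k\|$ via Lemma~\ref{lemma:yk_vk_diffusion}.
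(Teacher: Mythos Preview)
Your argument is correct: the interchange of summation is justified by nonnegativity, and the inner geometric sum evaluates to $(1-\beta)^{-1}$, giving the explicit bound $\tfrac{1}{1-\beta}\sum_s\zeta_s<\infty$. The paper itself does not supply a proof of this lemma; it simply cites it from \cite[Lemma~3.1(b)]{ram_2010_stocsub}, so there is no in-paper argument to compare against. Your Tonelli-based reordering is the standard proof, and the explicit constant you record is indeed what gets used implicitly when the paper combines this lemma with Lemma~\ref{lemma:yk_vk_diffusion}.
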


In what follows, we use $x^k$ to denote the vector with components $x_i^k$, $i=1,\ldots,N$,
i.e., $x^k=(x_1^k,\ldots,x_N^k)$ and, similarly, we write $x^*$ for the vector $(x_1^*,\ldots,x^*_N)$.

\begin{proposition}
\label{prop:diffusion_convergence}
Let Assumptions~\ref{assump:set_func}--\ref{assump:step_diffusion} hold.
Then, the sequence $\{x^k\}$ generated by the method
\eqref{eqn:x_algo}--\eqref{eqn:estimate_update} 
converges to the  (unique) solution $x^*$ of VI($K,\phi)$. 
\end{proposition}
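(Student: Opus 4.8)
The plan is to track the squared distance $V_k \triangleq \|x^k - x^*\|^2$ to the unique equilibrium $x^*$ and show it obeys a deterministic recursion to which Lemma~\ref{lemma:supermartingale} applies (with trivial filtration and $u_k \equiv 0$, so the conditional expectation is vacuous). First I would expand a single agent's update: since $x_i^* \in K_i$ and $\Pi_{K_i}$ is non-expansive,
\[
\|x_i^{k+1} - x_i^*\|^2 \le \|x_i^k - x_i^*\|^2 - 2\alpha_k F_i(x_i^k, N\hat v_i^k)^T(x_i^k - x_i^*) + \alpha_k^2 \|F_i(x_i^k, N\hat v_i^k)\|^2.
\]
Summing over $i$ and using $\|F_i(x_i^k, N\hat v_i^k)\| \le C$ from Lemma~\ref{cor:fibound}, the last term contributes at most $\alpha_k^2 N C^2$, which is summable by Assumption~\ref{assump:step_diffusion}(iii).

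The key step is to convert the aggregated cross term into the monotone quantity associated with $\phi$. Since $Ny^k = \sum_j x_j^k = \bar x_k$ by Lemma~\ref{lemma:true_agg}, the true map value is $\phi(x^k)_i = F_i(x_i^k, Ny^k)$. I would split $F_i(x_i^k, N\hat v_i^k) = F_i(x_i^k, Ny^k) + [F_i(x_i^k, N\hat v_i^k) - F_i(x_i^k, Ny^k)]$ to get
\[
\sum_i F_i(x_i^k, N\hat v_i^k)^T(x_i^k - x_i^*) = \phi(x^k)^T(x^k - x^*) + r_k,
\]
where $r_k$ collects the estimation error. For the first term, optimality of $x^*$ in VI$(K,\phi)$ gives $\phi(x^*)^T(x^k - x^*) \ge 0$, whence $\phi(x^k)^T(x^k - x^*) \ge (\phi(x^k) - \phi(x^*))^T(x^k - x^*) \ge 0$ by strict monotonicity (Assumption~\ref{ass-strict-mon}); set $\gamma_k \triangleq 2\alpha_k (\phi(x^k) - \phi(x^*))^T(x^k - x^*) \ge 0$. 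For $r_k$ I would apply the Lipschitz bound of Assumption~\ref{assump:Lipschitz} together with compactness of $K$ (diameter $D$) to obtain $|r_k| \le N D \bar L \sum_i \|\hat v_i^k - y^k\|$, with $\bar L = \max_i \bar L_i$, and then invoke Lemma~\ref{lemma:yk_vk_diffusion} to bound $\|\hat v_i^k - y^k\|$ by $\theta\beta^k M + \theta N C \sum_{s=1}^k \beta^{k-s}\alpha_{s-1}$.

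Assembling these pieces yields $V_{k+1} \le V_k - \gamma_k + \beta_k$ with $\beta_k \triangleq 2\alpha_k |r_k| + \alpha_k^2 N C^2$, and the crux is $\sum_k \beta_k < \infty$. The term $\sum_k \alpha_k \beta^k$ is finite since $\beta<1$ and $\{\alpha_k\}$ is bounded; for the double sum $\sum_k \alpha_k \sum_{s=1}^k \beta^{k-s}\alpha_{s-1}$ I would use monotonicity of the stepsize (Assumption~\ref{assump:step_diffusion}(i)) to replace $\alpha_k$ by $\alpha_{s-1}$ for $s \le k$, reducing it to $\sum_k \sum_{s} \beta^{k-s}\alpha_{s-1}^2$, finite by Lemma~\ref{lem:sum_conv} with $\zeta_s = \alpha_{s-1}^2$. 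Lemma~\ref{lemma:supermartingale} then gives that $V_k$ converges and $\sum_k \gamma_k < \infty$. Since $\sum_k \alpha_k = \infty$ (Assumption~\ref{assump:step_diffusion}(ii)), this forces $\liminf_k (\phi(x^k) - \phi(x^*))^T(x^k - x^*) = 0$; extracting a subsequence $x^{k_j} \to \tilde x \in K$ (by compactness) along which this inner product tends to $0$ and applying strict monotonicity identifies $\tilde x = x^*$, so $\|x^{k_j} - x^*\| \to 0$. As $V_k$ converges, its limit must be $0$, giving $x^k \to x^*$.

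The main obstacle I anticipate is the bookkeeping in the summability of the error contribution $\sum_k \alpha_k \|\hat v_i^k - y^k\|$: controlling the double sum hinges on using stepsize monotonicity to turn the product $\alpha_k \alpha_{s-1}$ into a square and then invoking Lemma~\ref{lem:sum_conv}. The concluding passage from \emph{liminf of the monotone gap is zero} to genuine convergence of the entire sequence also requires care, combining the subsequential limit with the already-established convergence of $V_k$.
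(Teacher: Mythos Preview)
Your proposal is correct and follows essentially the same route as the paper: expand the projected step, split the gradient term via $F_i(x_i^k,Ny^k)$ so that summing over $i$ produces $\phi(x^k)$, control the consensus error through Lemma~\ref{lemma:yk_vk_diffusion} and Lemma~\ref{lem:sum_conv}, apply the deterministic case of Lemma~\ref{lemma:supermartingale}, and finish with the subsequence/strict-monotonicity argument. The only cosmetic difference is that the paper introduces $F_i(x_i^*,\bar x^*)$ via the fixed-point characterization $x_i^*=\Pi_{K_i}[x_i^*-\alpha_kF_i(x_i^*,\bar x^*)]$ before expanding, whereas you use the bare nonexpansiveness $\|\Pi_{K_i}(z)-x_i^*\|\le\|z-x_i^*\|$ and bring in $\phi(x^*)$ afterward through the VI inequality $\phi(x^*)^T(x^k-x^*)\ge 0$; both devices are equivalent here and the remaining estimates are identical.
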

\begin{proof}
By Proposition~\ref{uniq-equil}, VI$(K,\phi)$ has a unique solution $x^*\in K$.
When $x^*$ solves the variational inequality problem VI$(K,\phi)$, 
the following relation holds 
$x^*=\Pi_{K_i}[x^*_i-\alpha_k F_i(x^*_i,\bar{x}^*)]$ 
(see \cite[Proposition 1.5.8, p.~83]{Facchinei_Pang03}).
From this relation and the non-expansive property of projection
operator, we see that
\begin{align*}
\|x_i^{k+1}-x_i^*\|^2 
 & = \|\Pi_{K_i}[x_i^k-\alpha_k F_i(x_i^k,N{\hat v_i^k})] - x^*_i\|^2\cr
 & = \|\Pi_{K_i}[x_i^k-\alpha_k F_i(x_i^k,N{\hat v_i^k})] 
         - \Pi_{K_i}[x^*_i-\alpha_k F_i(x^*_i,\bar{x}^*)]\|^2\cr
 & \leq \|x_i^k-x^*_i-\alpha_k (F_i(x_i^k,N{\hat v_i^k}) - F_i(x^*_i,\bar{x}^*))\|^2.
\end{align*}
By expanding the last term, we obtain the following expression:
\begin{align}\label{rel1}
\|x_i^{k+1}-x_i^*\|^2  
\leq &\|x_i^k-x^*_i\|^2
+ \alpha_k^2 \underbrace{ \|F_i(x_i^k,N{\hat v_i^k}) -
	F_i(x^*_i,\bar{x}^*)\|^2}_{\mbox{\bf Term 1}}\cr
&
- 2 \alpha_k \underbrace{ (F_i(x_i^k,N{\hat v_i^k}) -
		F_i(x^*_i,\bar{x}^*))^T(x_i^k-x^*_i)}_{\mbox{\bf Term 2}}.
\end{align}
To estimate {\bf Term 1}, we use the triangle inequality and the identity 
$(a+b)^2 \leq 2(a^2+b^2)$, which yields 
\begin{align*}
 \mbox{\bf Term 1}
\le 2\|F_i(x_i^k,N\hat v_i^k)\|^2 + 2 \|F_i(x^*_i,\bar{x}^*)\|^2
\le \tilde C\qquad\hbox{with}\quad
 \tilde C= 2C^2+2\max_{(x_i,\bar x)\in K_i\times \bar K}\|F_i(x_i,\bar{x})\|^2,
\end{align*}
where $C$ is such that $\|F_i(x_i^k,N\hat v_i^k)\|\le C$ for all $i$ and
$k$ (cf. Lemma~\ref{cor:fibound})
and $\max_{(x_i,\bar x)\in K_i\times \bar K}\|F_i(x_i,\bar x)\|$ is finite by 
Assumption~\ref{assump:set_func}. 
Next, we consider $\mbox{\bf Term 2}$.
By adding and subtracting $F_i(x_i^k,Ny^k)$ in $\mbox{\bf Term 2}$, where $y^k$ is defined 
by~\eqref{eqn:true_agg1}, we have
\begin{align*}
 \mbox{\bf Term 2}
=  \left (F_i(x_i^k,N \hat v_i^k) - F_i(x_i^k,Ny^k)\right)^T(x_i^k-x^*_i) 
+\left(F_i(x^k_i,Ny^k) - F_i(x^*_i,\bar{x}^*)\right)^T(x_i^k-x^*_i).
\end{align*}
By applying the \jk{Cauchy-Schwarz} inequality, i.e. $a^Tb \ge- \|a\|\|b\|$, 
to the first term on the right hand side of 
the preceding relation and the Lipschitz continuity of $F_i(x_i,u)$ in $u$ 
(cf.~Assumption~\ref{assump:Lipschitz}), we see that
\begin{align*}
(F_i(x_i^k,N\hat v_i^k) - F_i(x_i^k,Ny^k))^T(x_i^k-x^*_i)
& \ge- \|F_i(x_i^k,N\hat v_i^k)-F_i(x_i^k,Ny^k)\|\cdot \|x^k_i-x^*_i\|\cr
& \ge -\bar L_iN\|\hat v_i^k -y^k\|\cdot\|x^k_i-x^*_i\|\cr
&\ge -2\bar L_i MN\|\hat v_i^k -y^k\|,
\end{align*}
where in the last inequality we use $x_i^k,x_i^*\in K_i$ and the compactness of $K_i$ 
(cf.~Assumption~\ref{assump:set_func}) and $M\ge \max_{x_i\in K_i}\|x_i\|$ for all $i$.
Therefore, we have 
\begin{align*}
 \mbox{\bf Term 2}
\ge -2\bar L_i MN\|\hat v_i^k -y^k\|+\left(F_i(x^k_i,Ny^k) - F_i(x^*_i,\bar{x}^*)\right)^T(x_i^k-x^*_i).
 \end{align*}
By substituting the preceding estimates of 
$\mbox{\bf Term 1}$ and $\mbox{\bf Term 2}$ in~\eqref{rel1}, we obtain
\begin{align*}
\|x_i^{k+1}-x_i^*\|^2 
&\leq  \|x_i^k-x^*_i\|^2 + \tilde C\alpha_k^2
      +4\alpha_k \bar L_i M N\|\hat v_i^k -y^k\|\cr
& \;- 2 \alpha_k\left(F_i(x^k_i,Ny^k) - F_i(x^*_i,\bar{x}^*)\right)^T(x_i^k-x^*_i).
\end{align*}
Summing over all agents from $i=1$ to $i=N$, yields
\begin{align*}
\sum_{i=1}^N\|x^{k+1}_i-x^*_i\|^2 
& \leq  \sum_{i=1}^N \|x^k_i-x^*_i\|^2 +N\tilde C\alpha_k^2 
+4\alpha_k MN\sum_{i=1}^N \bar L_i \|\hat v_i^k - y^k\| \cr
& \; - 2 \alpha_k\sum_{i=1}^N\left(F_i(x^k_i,Ny^k) - F_i(x^*_i,\bar{x}^*)\right)^T(x_i^k-x^*_i).
\end{align*}
Using $Ny^k =\sum_{i=1}^N x^k_i$ (see Lemma~\ref{lemma:true_agg}) and letting 
$\bar x^k= \sum_{i=1}^N x^k_i$,  we have for all $k\ge0$,
\begin{align}\label{eqn:last}
\|x^{k+1}-x^*\|^2 
 & \leq  \|x^k-x^*\|^2 + N\tilde C\alpha_k^2
+4\alpha_kMN\sum_{i=1}^N \bar L_i \|\hat v_i^k - y^k\| \cr
&\;- 2 \alpha_k \left(\phi(x^k) - \phi(x^*)\right)^T(x^k-x^*),
\end{align}
where we also use the fact that $F_i(x_i,\bar x)$ is a coordinate map for the mapping 
$\phi(x)=F(x,\bar x)$ (see~\eqref{eqn:svinot} and~\eqref{eqn:phimap}).  
To claim \us{that the sequence}  $\{x^k\}$ converges to $x^*$, 
we apply Lemma~\ref{lemma:supermartingale} 
(for the deterministic sequences) to relation~\eqref{eqn:last}. 
To apply this lemma, since $\sum_{k=0}^\infty\alpha_k^2<\infty$ 
by Assumption~\ref{assump:step_diffusion}, we only need to prove
\begin{align}\label{eqn:key1}
\sum_{k=0}^\infty \alpha_k\|\hat v_i^k - y^k\|<\infty\qquad\hbox{for all }i\in\Nscr.\end{align}
In view of Lemma~\ref{lemma:yk_vk_diffusion}, we have
\[\|y^k - \hat v_i^k\|  \leq  \theta \beta^{k}M+ \theta NC\sum_{s=1}^{k} \beta^{k-s}\alpha_{s-1} 
\qquad\hbox{for all $i\in\Nscr$ and all $k\ge 1$},\]
so it suffices to prove that
\[\sum_{k=1}^\infty \alpha_k\left(\sum_{s=1}^{k} \beta^{k-s}\alpha_{s-1}\right)<\infty
\qquad\hbox{and}\qquad
\sum_{k=1}^\infty \alpha_k\beta^{k}<\infty.\]
Using $\alpha_k \leq \alpha_{s}$ for all $k \ge s$ (Assumption~\ref{assump:step_diffusion}),
for the series $\sum_{k=1}^\infty \alpha_k\left(\sum_{s=1}^{k} \beta^{k-s}\alpha_{s-1}\right)$
we have
\begin{align*}
\sum_{k=1}^\infty \alpha_k\left(\sum_{s=1}^{k} \beta^{k-s}\alpha_{s-1}\right) = 
\sum_{k=1}^\infty \left(\sum_{s=1}^{k} \beta^{k-s}\alpha_k\alpha_{s-1}\right) 
 \leq \sum_{k=1}^\infty \left(\sum_{s=1}^{k} \beta^{k-s}\alpha_{s-1}^2\right).
\end{align*}
We now use Lemma \ref{lem:sum_conv}, from which 
by letting $\zeta_{s} = \alpha_{s}^2$ we can see that
$\sum_{k=1}^\infty \alpha_k\left(\sum_{s=1}^{k} \beta^{k-s}\alpha_{s-1}\right) < \infty.$
To establish the convergence of $\sum_{k=0}^\infty\alpha_k\beta^{k}$, we note that 
$\alpha_k\le \alpha_0$  
(Assumption~\ref{assump:step_diffusion}), implying that 
$\sum_{k=0}^\infty \alpha_k\beta^k\le
\alpha_0\sum_{k=0}^\infty\beta^k<\infty$ since $0<\beta<1$. 
Thus, relation~\eqref{eqn:key1} is valid.

As relation~\eqref{eqn:last} satisfies the conditions of (the deterministic case of) 
Lemma~\ref{lemma:supermartingale}, 
it follows that
\begin{equation}\label{eqn:seq_conv_det}
\{\|x^k-x^*\|\} \quad \mbox{is a convergent sequence,}
\end{equation}
\begin{equation}\label{eqn:F_conv_det}
\mbox{ and }  \sum_{k=0}^\infty \alpha_k (\phi(x^k) - \phi(x^*) )^T(x^k-x^*) < \infty.
\end{equation}
Since $\{x^k\}\subset K$ and $K$ is compact (Assumption~\ref{assump:set_func}),
$\{x^k\}$ has accumulation points in $K$.
By~\eqref{eqn:F_conv_det} and $\sum_{k=0}^\infty \alpha_k = \infty$
it follows that $(\phi(x^{k}) - \phi(x^*))^T (x^k-x^*)\to0$
along a subsequence, say $\{x^{k_\ell}\}$. This observation,
	together with the strict monotonicity of $\phi$,
implies that $\{x^{k_\ell}\}\to x^*$ as $\ell\to\infty$. \uvs{To claim
	$\|x^k-x^*\| \to 0,$  {we proceed by contradiction and} assume that
		$\|x_k-x^*\|$ does not converge to $0$. Then there exists an $\epsilon > 0$
and a subsequence $\{x_{n_l}\}$ such that $\|x_{n_l} - x^*\| \geq \epsilon$
for $l$ sufficiently large. But
since the set is bounded, there exists a further subsequence which
converges to some $\tilde{x}$. By relation~\eqref{eqn:seq_conv_det}, the
sequence $\{x_k\}$ is convergent and therefore, it
must be that $\tilde{x} = x^*$ \uvs{ and } 
the entire sequence $\{x^k\}$ must converge to $x^*$.}
\end{proof}

Though it is difficult to make a statement 
on the rate of convergence for the result of  Proposition~\ref{prop:diffusion_convergence}, 
the network connectivity plays an important 
role in determining the rate as already discussed for the results of
Lemma~\ref{lemma:yk_vk_diffusion}. Indeed, if $\beta$ is close to 1,
	which is the case for a network with \us{poor} connectivity, players take longer to converge on their estimate of the true aggregate, thereby taking it longer to converge on their optimal decision.

\section{Distributed Asynchronous Algorithm}\label{sec:asynchron}
In this section, we propose a distributed gossip-based
	algorithm for computing an equilibrium of aggregative Nash
game, \us{as defined by}~\eqref{eqn:problem}. In a gossip protocol, the
information is propagated by running a \us{round of} information
exchange between a randomly chosen player who communicates with another
player chosen at random. A more detailed description of the algorithm
and some preliminary results are provided in section~\ref{sec:4.1}. The
global convergence of the algorithm is examined in
section~\ref{sec:convergence}, while constant steplength error bounds
are provided in section~\ref{sec:4.3}.

\subsection{Outline of Algorithm}\label{sec:4.1}
In the proposed algorithm, agents perform their estimate and iterate
updates the same as in the synchronous
algorithm~\eqref{eqn:x_algo}--\eqref{eqn:estimate_update}, but the
updates occur asynchronously. As a mechanism for generating asynchronous
updates we employ the gossip model for agent
communications~\cite{boyd_gossip_2006}. Together with the asynchronous
updates, we allow the agents to use uncoordinated stepsize values by
letting each agent choose a stepsize based on its own information-update
frequency. To accommodate these updates and 
stepsize selections, we model the agent connectivity structure by an
undirected static graph $\Gscr(\Nscr,\Escr)$, with node $i\in\Nscr$
being agent $i$ and $\Escr$ being the set of undirected edges among the
agents. When $\{i,j\}\in\Escr$, the agents $i$ and $j$ \us{may} talk to
each other.  We let $\Nscr_i$ denote the set of neighbors of agent $i,$
i.e., $\Nscr_i= \{j \mid \{i,j\} \in \Escr\}.$ We use the following
assumption for the graph $\Gscr(\Nscr,\Escr)$.
\begin{assumption}\label{asum:graph}
The undirected graph $\Gscr(\Nscr,\Escr)$ is connected.
\end{assumption}

We use a gossip protocol to model agent communication and exchange of
the estimates of the aggregate $\bar x$.  In this model, each agent is
assumed to have a local clock which ticks according to a Poisson process
with rate~1. At a tick of its clock, an agent $i$ wakes up and contacts
its neighbor $j\in\Nscr_i$ with probability $p_{ij}$.  The agents'
clocks processes can be equivalently modeled as a single (virtual) clock
which ticks according to a Poisson process with rate~$N$. We assume that
only one agent wakes up at each tick of the global clock, and we let
$Z^k$ denote $k$th tick time of the global Poisson process.  We
discretize time so that instant $k$ corresponds to the time-slot
$[Z^{k-1},Z^k)$.  At each time $k$, every agent $i$ has its iterate
$x_i^k$ and estimate $v^{k}_i$ of the average of the current aggregate.
We let $I^k$ denote the agent whose clock ticked at time $k$ and we let
$J^k$ be the agent contacted by the agent $I^k$, where $J^k$ is a
neighbor of agent $I^k,$ i.e., $J^k \in \mathcal{N}_{I^k}$.
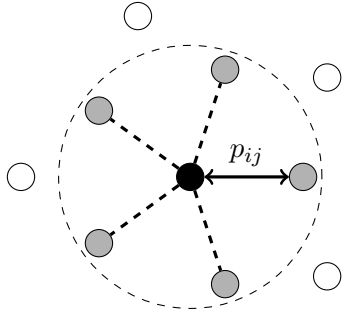
\begin{figure}
\centering
\begin{tikzpicture}[scale=1.5]
\tikzstyle{vertex}=[draw,shape=circle];
\tikzstyle{selected}=[vertex,fill=black];
\tikzstyle{neighbor}=[vertex,fill=black!30];
\tikzstyle{nonneighbor}=[vertex,fill=black!0];

\node[circle,color=black,minimum width=3.5cm,draw,dashed] (v) {};
\tikzstyle{edge} = [draw,thick,-]
\tikzstyle{selected edge} = [draw,line width=1.2pt,<->,black]
\tikzstyle{ignored edge} = [draw,line width=1.2pt,dashed,black]

\path[selected] (0:0cm)    node[selected] (v0) {};
\path (0:1cm)    node[neighbor] (v1) {};
\path (72:1cm)   node[neighbor] (v2) {};
\path (2*72:1cm) node[neighbor] (v3) {};
\path (3*72:1cm) node[neighbor] (v4) {};
\path (4*72:1cm) node[neighbor] (v5) {};

\node[nonneighbor] (v6) at (36:1.5cm) {};
\node[nonneighbor] (v7) at (108:1.5cm) {};
\node[nonneighbor] (v8) at (180:1.5cm) {};
\node[nonneighbor] (v9) at (324:1.5cm) {};

\path[selected edge] (v0) edge node[above] {$p_{ij}$} (v1);
\path[ignored edge] (v0) -- (v2);
\path[ignored edge] (v0) -- (v3);
\path[ignored edge] (v0) -- (v4);
\path[ignored edge] (v0) -- (v5);
\end{tikzpicture}
\caption{A depiction of a gossip communication.}
\end{figure}
At time $k$, agents $I^k$ and $J^k$ exchange their estimates $v^k_{I^{k}}$ and 
$v^k_{J^{k}}$ and compute intermediate estimates:
\begin{equation}\label{eqn:gossip_mixing}
 \hat v_i^k =  \frac{ v^k_{I^{k}} + v^k_{J^{k}}}{2} \quad \mbox{for } i \in \{I^k,J^k\},
\end{equation}
and update their iterates and estimates of the aggregate average, as follows:
\begin{align}\label{eqn:gossip_up}
\left. \begin{array}{l}
x_i^{k+1} :=\Pi_{K_i}[x_i^k-\alpha_{k,i} F_i(x_i^k,N\hat v^k_i)]\cr
v_i^{k+1}  :=\hat v_i^k+x_i^{k+1}-x_i^k 
\end{array}\right\}\qquad \mbox{for } i \in \{I^k,J^k\},
\end{align}
where $\alpha_{k,i}$ is the stepsize for agent $i$ and $F_i(x_i,y)=\nabla_{x_i}f_i(x_i,y).$
The other agents do nothing, i.e.,
\begin{align}\label{eqn:gossip_noup}
\hat v_i^k = v^k_i,\quad x_i^{k+1}=x_i^{k}, \quad\hbox{and}\quad  v_i^{k+1}=v_i^{k}
\qquad\hbox{for $i \not \in \{I^k,J^k\}$}.
\end{align}
As seen from the preceding update relations, the agents perform the same updates as in
the synchronous algorithm~\eqref{eqn:x_algo}--\eqref{eqn:estimate_update},
but instead of all agents updating, only two randomly selected agents update their estimates
and iterates, while the other agents do not update.

We now rewrite the update steps more compactly.
To capture the step in~\eqref{eqn:gossip_mixing}, we define the weight matrix $W(k)$:
\begin{equation}
\label{eqn:weight_matrix}
W(k) =  \mathbb{I} - \frac{1}{2}(e_{I^{k}}-e_{J^{k}})(e_{I^{k}}-e_{J^{k}})^T,
\end{equation}
where $\mathbb{I}$ stands for the identity matrix, 
$e_i$ is $N$-dimensional vector with $i$th entry equal to 1, and the other entries equal to 0.
By using $W(k)$ we can rewrite the intermediate estimate update~\eqref{eqn:gossip_mixing}, 
as follows:
for all $i=1,\ldots,N$,
\begin{align}\label{eqn:gossip_mix}
\hat v_i^k= \sum_{j=1}^{N} [W(k)]_{ij}v^k_j \quad\hbox{for all $k\ge1$,}\quad
\mbox{with} 
\quad v_i^0 = x_i^0,\end{align}
where $x_i^0\in K_i$, $i=1,\ldots,N$, are initial (random) agent decisions.
To rewrite the iterate $x_i^{k+1}$ update (or no update) compactly for all agents, 
we let $\mathbbm{1}_{\{i \in S\}}$ denote the indicator of the event 
$\{i \in S\}$. Then, the update relations in~\eqref{eqn:gossip_up} 
and~\eqref{eqn:gossip_noup} can be written as:
\begin{align}\label{eqn:gossip_x_algo} 
x_i^{k+1} & =\left(\Pi_{K_i}[x_i^k-\alpha_{k,i} F_i(x_i^k,N\hat v^k_i)] - x^k_i 
 \right)\mathbbm{1}_{\{i \in \{I^k,J^k\}\}} 
+ x^k_i, \\
\label{eqn:gossip_estimate_update}
 v_i^{k+1} & =\hat v_i^k+x_i^{k+1}-x_i^k,
\end{align}
Note that only agents $i \in \{I^k,J^k\}$ update since $\mathbbm{1}_{\{i \in \{I^k,J^k\}\}} =0$
when $i \not \in \{I^k,J^k\}$ and, hence,
$x^{k+1}_i = x^k_i$ and $v^{k+1}_i = \hat v^k_i$ with $\hat v^k_i=v^k_i$ (by~\eqref{eqn:gossip_mix}).

We allow agents to use uncoordinated stepsizes that are based on the frequency of the
agent updates. Specifically, agent $i$ uses the stepsize
$\alpha_{k,i} = \frac{1}{\Gamma_k(i)},$
where $\Gamma_k(i)$ denotes the number of updates that agent $i$ has executed 
up to \uvs{ and including at } time $k$. These stepsizes are of the order of $\frac{1}{k}$ 
in a long run~\cite{Ram2009cdc,Nedic2011}. To formalize this result, we need to introduce
the probabilities of agents updates. We let $p_i$ denote the probability of the event 
that agent $i$ updates, i.e. $\{i \in \{I^k,J^{k}\}\},$ for which we have
\[p_i = \frac{1}{N} \left(1 + \sum_{j \in \mathcal{N}_i} p_{ji}\right) \quad 
 \mbox{for all } i \in \mathcal{N},\]
where $p_{ji} >0$ is the probability that agent $i$ is contacted by its neighbor $j$. 
The long term estimates for $\alpha_{k,i}$ 
that we use in our analysis are given in the following lemma
(cf.~\cite{Nedic2011}, Lemma 3), the proof of which is provided in
	the appendix for completeness.

\begin{lemma}\label{lemma:stepsize}
Let Assumption~\ref{asum:graph} hold, and let $\hat p =1+\min_{\{i,j\}\in \Escr}p_{ij}$ and 
$\alpha_{k,i}=1/\Gamma_k(i)$ for all $k$ and $i$. Then, for any $q \in
(0,1/2)$ \uvs{ and for every $\omega \in \Omega$},  
there \uvs{exists} a \uvs{sufficiently large} $\tilde k(\omega) = \tilde k(q,N)$ such that 
we have for all $k \geq \tilde k(\uvs{\omega})$ and $i \in \Nscr$, 
\[
\alpha_{k,i} \leq \frac{2}{kp_i} \mbox{ and } 
\qquad \left|\alpha_{k,i}- \frac{1}{kp_i} \right| \leq \frac{2}{k^{3/2-q}\hat p^2}.
\]
\end{lemma}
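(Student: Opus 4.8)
The plan is to recognize $\Gamma_k(i)$ as a sum of i.i.d.\ Bernoulli trials and then to control its fluctuations around its mean by a concentration argument. At each tick $t$ of the global Poisson clock exactly one agent $I^t$ wakes up, uniformly among the $N$ agents (since the superposition of the $N$ rate-one clocks is rate-$N$ Poisson and each tick is equally likely to originate from any clock), and then contacts a neighbor $J^t$ with probability $p_{I^t j}$; these choices are made afresh and independently at every tick. Hence the indicators $X_t^{(i)} := \mathbbm{1}_{\{i\in\{I^t,J^t\}\}}$, $t=1,2,\ldots$, are i.i.d.\ Bernoulli variables with $\mathbb{E}[X_t^{(i)}] = \tfrac1N\bigl(1+\sum_{j\in\Nscr_i}p_{ji}\bigr)=p_i$, and $\Gamma_k(i)=\sum_{t=1}^k X_t^{(i)}$, so that $\mathbb{E}[\Gamma_k(i)]=kp_i$. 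The lemma thus reduces to quantifying how tightly $\Gamma_k(i)$ concentrates around $kp_i$.

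First I would establish a path-wise deviation bound. Setting $c_i := p_i^2/\hat p^2 > 0$ and applying Hoeffding's inequality to the bounded i.i.d.\ sum gives $\mathbb{P}\bigl(|\Gamma_k(i)-kp_i|\ge c_i k^{1/2+q}\bigr)\le 2\exp(-2c_i^2 k^{2q})$. Since $q>0$, the right-hand side is summable in $k$, so the Borel--Cantelli lemma yields that, almost surely, $|\Gamma_k(i)-kp_i| < c_i k^{1/2+q}$ for all $k$ beyond some finite random index $\tilde k_i(\omega)$. Taking $\tilde k(\omega)=\max_{i\in\Nscr}\tilde k_i(\omega)$, which is finite because $\Nscr$ is finite, makes this deviation bound hold simultaneously for every agent; the dependence of $\tilde k$ on $q$ and $N$ enters through the summability threshold and the constants $c_i$.

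From this single estimate both displayed inequalities follow by elementary algebra. Because $q<1/2$ we have $c_i k^{1/2+q}=o(k)$, so for $k\ge\tilde k(\omega)$ the deviation bound forces $\Gamma_k(i)\ge kp_i/2$, whence $\alpha_{k,i}=1/\Gamma_k(i)\le 2/(kp_i)$, the first claim. For the second,
\[
\left|\alpha_{k,i}-\frac{1}{kp_i}\right| = \frac{|kp_i-\Gamma_k(i)|}{\Gamma_k(i)\,kp_i}
\le \frac{2\,|kp_i-\Gamma_k(i)|}{(kp_i)^2}
\le \frac{2 c_i k^{1/2+q}}{k^2 p_i^2} = \frac{2}{k^{3/2-q}\hat p^2},
\]
where the first inequality uses $\Gamma_k(i)\ge kp_i/2$ and the last step substitutes $c_i=p_i^2/\hat p^2$.

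The main obstacle is obtaining the correct rate: the strong law of large numbers alone gives only $\Gamma_k(i)=kp_i+o(k)$, which is far too weak for the $k^{1/2+q}$ fluctuation needed here. The crux is therefore the concentration step, where one must check that the exponent $k^{2q}$ produced by Hoeffding is summable (this needs $q>0$) and that the chosen scale $c_i k^{1/2+q}$ is simultaneously small enough to deliver the stated constant $2/\hat p^2$ yet, since $q<1/2$, still $o(k)$ so as to also yield $\Gamma_k(i)\ge kp_i/2$. An alternative route replaces Hoeffding and Borel--Cantelli with the Hartman--Wintner law of the iterated logarithm, which gives $|\Gamma_k(i)-kp_i|=O(\sqrt{k\log\log k})=o(k^{1/2+q})$ almost surely and then feeds into exactly the same algebra.
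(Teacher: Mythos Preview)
Your argument is correct and follows essentially the same skeleton as the paper: recognize $\Gamma_k(i)$ as an i.i.d.\ Bernoulli sum with mean $kp_i$, obtain an almost-sure fluctuation bound of order $k^{1/2+q}$, deduce $\Gamma_k(i)\ge kp_i/2$ for large $k$, and then run the identical algebra $\bigl|\alpha_{k,i}-\tfrac{1}{kp_i}\bigr|=\tfrac{|kp_i-\Gamma_k(i)|}{\Gamma_k(i)\,kp_i}\le \tfrac{2|kp_i-\Gamma_k(i)|}{(kp_i)^2}$ to get both inequalities. The only difference is in how the fluctuation bound is obtained: the paper invokes the law of the iterated logarithm (which you already list as an alternative) with deviation threshold $k^{1/2+q}/N^2$ and then appeals to $p_i\ge \hat p/N$ at the end to convert $p_i^2N^2\ge \hat p^2$, whereas you use Hoeffding plus Borel--Cantelli with the tailored constant $c_i=p_i^2/\hat p^2$, which is slightly more elementary and folds the $\hat p$ dependence in from the start; both routes land on exactly the same final bound.
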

\uvs{Note that $\tilde k(\omega)$  is contingent on the sample path
	corresponding to }  $\omega.$ More precisely, we claim the following:
\[
\mathbb{P}\left[\omega: \alpha_{k,i} \leq \frac{2}{kp_i} \mbox{ for } k \geq \tilde k(\omega)\right] = 1.
\]
Based on Lemma~\ref{lemma:stepsize}, we provide \uvs{the next
	corollary without a proof.}
\begin{corollary}\label{cor-alpha} Let Assumption~\ref{asum:graph} hold, and let $\hat p =1+\min_{\{i,j\}\in \Escr}p_{ij}$ and 
$\alpha_{k,i}=1/\Gamma_k(i)$ for all $k$ and $i$. Then for all $i \in
\Nscr$, \uvs{the following hold with probability one:}
\begin{equation*}
(a) \ \sum_{k=1}^\infty \frac{\alpha_{k,i}}{k} < \infty; \quad  (b)
\sum_{k=1}^\infty \alpha_{k,i}^2 < \infty; \quad (c) 
\sum_{k=1}^\infty \left|\alpha_{k,i}- \frac{1}{kp_i} \right| < \infty.
\end{equation*}
\end{corollary}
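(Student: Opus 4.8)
The plan is to obtain all three summability statements as immediate consequences of Lemma~\ref{lemma:stepsize}. That lemma guarantees, on an event of probability one, a finite (but sample-path dependent) index $\tilde k(\omega)$ beyond which both $\alpha_{k,i}\le 2/(kp_i)$ and $\left|\alpha_{k,i}-1/(kp_i)\right|\le 2/(k^{3/2-q}\hat p^2)$ hold simultaneously for every $i\in\Nscr$. Fixing such an $\omega$, I would split each of the three series at $\tilde k(\omega)$: the head $\sum_{k=1}^{\tilde k(\omega)-1}$ is a finite sum of finite terms, since under Assumption~\ref{asum:graph} every agent is contacted infinitely often, so $\Gamma_k(i)\to\infty$ and in particular $\Gamma_k(i)\ge1$ (whence $\alpha_{k,i}\le1$) for all $k$ past the agent's first update. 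The head therefore contributes a finite amount almost surely, and it remains only to dominate each tail by a convergent series.

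For parts (a) and (b) I would use the first bound $\alpha_{k,i}\le 2/(kp_i)$, together with $p_i\ge 1/N>0$. For (a),
\[
\sum_{k=\tilde k(\omega)}^\infty \frac{\alpha_{k,i}}{k} \le \frac{2}{p_i}\sum_{k=\tilde k(\omega)}^\infty \frac{1}{k^2} < \infty,
\]
and for (b),
\[
\sum_{k=\tilde k(\omega)}^\infty \alpha_{k,i}^2 \le \frac{4}{p_i^2}\sum_{k=\tilde k(\omega)}^\infty \frac{1}{k^2} < \infty,
\]
each tail being controlled by the convergent $p$-series $\sum 1/k^2$.

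For part (c) I would fix any admissible $q\in(0,1/2)$ (say $q=1/4$) and invoke the second bound of Lemma~\ref{lemma:stepsize}:
\[
\sum_{k=\tilde k(\omega)}^\infty \left|\alpha_{k,i}-\frac{1}{kp_i}\right| \le \frac{2}{\hat p^2}\sum_{k=\tilde k(\omega)}^\infty \frac{1}{k^{3/2-q}} < \infty.
\]
Here convergence of $\sum 1/k^{3/2-q}$ is precisely why the exponent range $q<1/2$ is imposed, as it forces $3/2-q>1$. Adding the finite head to the finite tail in each case, and recalling that $\tilde k(\omega)$ is finite on a set of probability one, establishes (a)--(c) simultaneously on that set, which is the claim.

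The only point needing any care — and the sole (minor) obstacle — is that the threshold $\tilde k(\omega)$ is a random index, so the entire argument must be run pathwise on the probability-one event provided by Lemma~\ref{lemma:stepsize}, with the additional observation that the finitely many pre-threshold terms are almost surely finite because $\Gamma_k(i)\to\infty$. Beyond this, everything reduces to comparison with $p$-series, which is presumably why the text records the corollary without proof.
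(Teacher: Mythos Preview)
Your proposal is correct and matches the paper's intent: the paper explicitly presents the corollary \emph{without proof}, remarking only that it follows from Lemma~\ref{lemma:stepsize}, and your argument is precisely the natural derivation from that lemma via tail comparison with convergent $p$-series.
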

\uvs{Another useful result is provided in~\cite[Lemma 1]{Ram2009cdc}, and stated below
in a suitable form.}
\begin{lemma}\label{lemma:ram_thm}
Let $\Gscr(\Nscr,\Escr)$ be a graph that satisfies Assumption~\ref{asum:graph}.
Let $W$ be an $N\times N$ random stochastic matrix such that 
$\mathbb{E}[W]$ is doubly stochastic and $\mathbb{E}[W]_{ij} > 0$ 
whenever $\{i, j\}\in\Escr$. Furthermore, let 
the diagonal elements of $W$ be positive almost surely. Then,  for the matrix 
$D= W-\frac{1}{N}\mathbf{1}\mathbf{1}^TW$, there exists
a scalar $\lambda\in(0, 1)$ such that
\[\mathbb{E}[ \|Dz \|^2 ]
\le \lambda \|z\|^2\qquad\hbox{for all }z\in\mathbb{R}^N.\]
\end{lemma}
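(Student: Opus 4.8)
The plan is to establish the spectral-gap bound for $D = W - \frac{1}{N}\mathbf{1}\mathbf{1}^TW$ by exploiting the structure of $D$ together with the hypotheses on $W$. First I would observe that $D$ annihilates the consensus direction in a useful way: since $\mathbf{1}^T\left(\frac{1}{N}\mathbf{1}\mathbf{1}^TW\right) = \mathbf{1}^TW$ (because $\mathbf{1}^T\mathbf{1} = N$), we have $\mathbf{1}^TD = \mathbf{1}^TW - \mathbf{1}^TW = 0$ almost surely, so $Dz$ is always orthogonal to $\mathbf{1}$. This means it suffices to control $\|Dz\|$ for the component of $z$ orthogonal to $\mathbf{1}$, since adding a multiple of $\mathbf{1}$ to $z$ does not change $Dz$ when $W$ is stochastic (as $W\mathbf{1} = \mathbf{1}$ gives $D\mathbf{1} = \mathbf{1} - \frac{1}{N}\mathbf{1}\mathbf{1}^T\mathbf{1} = 0$).

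Next I would pass to expectations. Writing out $\mathbb{E}[\|Dz\|^2] = \mathbb{E}[z^TD^TDz] = z^T\mathbb{E}[D^TD]z$, the problem reduces to bounding the largest eigenvalue of the symmetric positive-semidefinite matrix $\mathbb{E}[D^TD]$ on the subspace orthogonal to $\mathbf{1}$. I would relate $\mathbb{E}[D^TD]$ to $\overline{W} \triangleq \mathbb{E}[W]$, which is doubly stochastic by hypothesis. The key algebraic identity is that $D^TD = W^T\left(\mathbb{I} - \frac{1}{N}\mathbf{1}\mathbf{1}^T\right)W$, using that $\left(\mathbb{I} - \frac{1}{N}\mathbf{1}\mathbf{1}^T\right)$ is the orthogonal projector onto $\mathbf{1}^\perp$ and is idempotent. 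Taking expectations and restricting to $z \perp \mathbf{1}$, I would aim to show $z^T\mathbb{E}[D^TD]z \le \lambda\|z\|^2$ for some $\lambda < 1$, where the strict inequality is what carries the real content.

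The strictness of $\lambda < 1$ is where the hypotheses on $\overline{W}$ enter, and this is the main obstacle. Since $\overline{W}$ is doubly stochastic, connected (because $\mathbb{E}[W]_{ij} > 0$ whenever $\{i,j\}\in\Escr$ and $\Gscr$ is connected by Assumption~\ref{asum:graph}), and has positive diagonal almost surely, the matrix $\overline{W}$ is a primitive doubly stochastic matrix whose second-largest singular value is strictly less than $1$. The positivity of the diagonal rules out periodicity, and connectivity rules out reducibility, so by Perron--Frobenius-type arguments the eigenvalue $1$ of $\overline{W}$ is simple with eigenvector $\mathbf{1}$, and all other eigenvalues lie strictly inside the unit disk. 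The delicate point is that I am bounding $\mathbb{E}[\|Dz\|^2]$, a second-moment quantity, rather than $\|\overline{W}z\|^2$; I would handle this by noting that $\mathbb{E}[W^T P W] \succeq \overline{W}^T P \overline{W}$ fails to give the right direction directly, so instead I would work with the fact that $P = \mathbb{I} - \frac{1}{N}\mathbf{1}\mathbf{1}^T$ satisfies $PW = P(W - \frac{1}{N}\mathbf{1}\mathbf{1}^TW) $ modulo the consensus direction, and reduce the second-moment bound to a statement about $\mathbb{E}[D^TD]$ restricted to the compact unit sphere in $\mathbf{1}^\perp$.

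To close the argument I would invoke a compactness-and-strict-positivity argument: the map $z \mapsto z^T\mathbb{E}[D^TD]z$ is continuous on the unit sphere of $\mathbf{1}^\perp$, which is compact, so it attains its maximum $\lambda$ at some $z^*$. I would then argue $\lambda < 1$ by contradiction: if $\lambda = 1$ then $\mathbb{E}[\|Dz^*\|^2] = \|z^*\|^2$, but since each realization satisfies $\|Dz^*\| = \|PWz^*\| \le \|Wz^*\| \le \|z^*\|$ (the last because stochastic matrices with the averaging structure of~\eqref{eqn:weight_matrix} are nonexpansive, being orthogonal projections or averaging operators), equality in expectation forces $\|Dz^*\| = \|z^*\|$ almost surely, which in turn forces $z^*$ to be fixed by $W$ along $\mathbf{1}^\perp$ with positive probability on enough edges to contradict connectedness of $\overline{W}$. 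This last step is the crux and the most technical; the positivity of $\mathbb{E}[W]_{ij}$ on edges together with graph connectivity is exactly what prevents a nonzero $z^* \perp \mathbf{1}$ from being almost-surely preserved, yielding the strict bound $\lambda \in (0,1)$.
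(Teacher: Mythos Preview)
The paper does not give a proof of this lemma; it simply quotes it from \cite[Lemma~1]{Ram2009cdc}. Your attempt therefore goes further than the paper does, but it has a genuine gap in the contradiction step.

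The problem is the pointwise inequality $\|Wz^*\|\le\|z^*\|$. You justify it by appealing to ``the averaging structure of~\eqref{eqn:weight_matrix}'', i.e.\ to the gossip matrices, which are indeed symmetric projections and hence $\ell_2$-nonexpansive. But the lemma is stated for an arbitrary random \emph{row}-stochastic $W$ whose \emph{expectation} is doubly stochastic; individual realizations need not be doubly stochastic and in general are \emph{not} $\ell_2$-nonexpansive (e.g.\ $W=\left(\begin{smallmatrix}1&0\\1&0\end{smallmatrix}\right)$ sends $(1,0)^T$ to $(1,1)^T$). So the chain $\|Dz^*\|\le\|Wz^*\|\le\|z^*\|$ breaks at the second step for the lemma as written, and the ``equality forces $\|Dz^*\|=\|z^*\|$ a.s.'' argument collapses. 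At best you have proved the special case where each realization of $W$ is doubly stochastic, which suffices for the gossip application but is weaker than the statement you were asked to prove.

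The fix is to move the contraction to the expectation rather than demand it pathwise. For row-stochastic $W$, Jensen's inequality applied to each row gives $(Wz)_i^2\le\sum_j W_{ij}z_j^2$, hence $\|Wz\|^2\le\sum_j z_j^2\sum_i W_{ij}$. Taking expectations and using that $\mathbb{E}[W]$ has column sums equal to~$1$ yields $\mathbb{E}[\|Wz\|^2]\le\|z\|^2$, and therefore $\mathbb{E}[\|Dz\|^2]\le\|z\|^2$ for $z\perp\mathbf{1}$. The strict bound $\lambda<1$ is then obtained by tracking the equality cases in the Jensen step (they force $z_j$ to be constant over each set $\{j:W_{ij}>0\}$) together with the positive-diagonal and connectivity hypotheses on $\mathbb{E}[W]$; this is essentially the argument in~\cite{Ram2009cdc}. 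Your reduction to $z\perp\mathbf{1}$ and the compactness framing are fine---it is only the pathwise nonexpansiveness that needs to be replaced.
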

The random matrices $W(k)$ in~\eqref{eqn:weight_matrix} are in fact doubly stochastic
and thus, $\bar W=\mathbb{E}[W(k)]$ is doubly stochastic. Moreover, it can be easily seen
that $\bar W_{ij} > 0$ whenever $\{i, j\}\in\Escr$.
In addition, $W(k)$ has positive diagonal entries. Hence, Lemma~\ref{lemma:ram_thm}
applies to random matrices $W(k)$. However, since each $W(k)$ is in fact doubly stochastic, 
we have $\mathbf{1}^TW(k)=\mathbf{1}^T$,  implying that
$W(k)-\frac{1}{N}\mathbf{1}\mathbf{1}^TW(k) =W(k)-\frac{1}{N}\mathbf{1}\mathbf{1}^T.$
Hence, using this observation and Lemma~\ref{lemma:ram_thm}, we find that there exists
$\lambda\in(0,1)$ such that for the matrix $D(k)=W(k)-\frac{1}{N}\mathbf{1}\mathbf{1}^T$ we have 
\begin{align}\label{eqn:lam}
\mathbb{E}[ \|D(k)\,z \|^2]\le \lambda \|z\|^2\qquad\hbox{for all }z\in\mathbb{R}^N.
\end{align}
By Jensen's inequality we have 
$|\mathbb{E}[X]|\le \sqrt{\mathbb{E}[X^2]}$ for any random variable $X$ (with a finite expectation),
which when applied to relation~\eqref{eqn:lam} yields
\begin{align}\label{eqn:lamn}
\mathbb{E} [ \| D(k)\,z \| ]\le \sqrt{\lambda}\, \|z\|\qquad\hbox{for all }z\in\mathbb{R}^N.
\end{align}
The value of second largest eigenvalue $\lambda$ controls the rate at which information is dispensed over the network. A network with large $\lambda$ will have players agreeing faster on their estimate of the aggregate than a network with a smaller $\lambda$. In Section~\ref{sec:numerics} we consider a variety of networks to demonstrate the impact of network topology through $\lambda$ on the rate of convergence.

\subsection{Convergence Theory}\label{sec:convergence}
In this section we establish the convergence of 
the asynchronous algorithm~\eqref{eqn:gossip_mix}--\eqref{eqn:gossip_estimate_update}
with the agent specific diminishing stepsize of the form $\alpha_{k,i} = \frac{1}{\Gamma_k(i)}$.
To take account of the history, we introduce $\mathcal{F}_k$ to denote the 
$\sigma-$algebra generated by the entire history up to $k$. More precisely
\[
 \mathcal{F}_k = \mathcal{F}_0 \cup \{I^l,J^l; 1 \leq l \leq k-1 \} \quad \mbox{for all } k \geq 2,
\]
with $\mathcal{F}_{1}=\mathcal{F}_{0}\{x^0_i, i \in \mathcal{N}\}.$
Thus, given $\mathcal{F}_k$, the vectors $v_i^k$ and $x_i^k$ are fully determined.
First we state several result which we will use to claim the convergence of the algorithm,
as well as to analyze the error bounds.

In what follows, we will use a vector-component based analysis. To this end, we introduce
$[z]_\ell$ to denote the $\ell$-th component of a vector 
$z \in \mathbb{R}^n,$ with $\ell=1,\ldots, n.$ 
A component-wise update of each $v^{k+1}_i$ in~\eqref{eqn:gossip_estimate_update} 
is given by: for all $i=1,\ldots,N$,
\[[v^{k+1}_i]_\ell 
= \sum_{j=1}^N[W(k)]_{ij}[v^k_j]_\ell + [x^{k+1}_i -x^k_i]_\ell \quad \mbox{for} \; \ell =1,\ldots,n.
\]
We collect all $\ell$th coordinates of the vectors $v_1^{k},\ldots, v_N^k$ and let
$v^k(\ell)=([v^k_1]_\ell,\ldots,[v^k_N]_\ell)^T$. We similarly do for the 
vectors  $x_1^{k},\ldots, x_N^k$ and let $x^k(\ell)=([x^k_1]_\ell,\ldots,[x^k_N]_\ell)^T$. 
Using the vectors $v^k(\ell)$ and $x^k(\ell)$, we can rewrite the preceding relation as follows:
\begin{align}\label{eqn:lthv}
 v^{k+1}(\ell) = W(k)v^k(\ell) + \zeta^{k+1}(\ell) \quad
 \hbox{with}\quad  \zeta^{k+1}(\ell)=x^{k+1}(\ell) - x^k (\ell)
 \qquad\hbox{for all }\ell=1,\ldots,n.
\end{align}
We have the  following result for $v^{k+1}(\ell)$ for any $\ell$.

\begin{lemma}\label{lem:v_estimate}
Let Assumptions~\ref{assump:set_func}--\ref{assump:Lipschitz} and 
Assumption~\ref{asum:graph} hold. Then, for all $\ell=1,\ldots, n$ and $k\ge0$,
\[\|v^{k+1}(\ell) - [y^{k+1}]_\ell\mathbf{1}\|  
\le  \| D(k)(v^k(\ell) - [y^{k}]_\ell\mathbf{1} ) \|
+ \jk{\sqrt{2} C\sum_{i=1}^N \alpha_{k,i}},\]
where $D(k)=W(k)-\frac{1}{N}\mathbf{1}\mathbf{1}^T$ and 
$C$ is a constant as in Lemma~\ref{cor:fibound}.
\end{lemma}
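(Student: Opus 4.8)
The plan is to work entirely at the level of the $\ell$-th coordinate vectors and exploit the compact recursion $v^{k+1}(\ell) = W(k)v^k(\ell) + \zeta^{k+1}(\ell)$ from~\eqref{eqn:lthv}, together with the double stochasticity of the gossip matrices $W(k)$ in~\eqref{eqn:weight_matrix}. First I would track how the average $[y^{k+1}]_\ell$ evolves. Since $[y^k]_\ell = \frac{1}{N}\mathbf{1}^T v^k(\ell)$ and $\mathbf{1}^T W(k) = \mathbf{1}^T$, left-multiplying the recursion by $\frac{1}{N}\mathbf{1}^T$ gives the scalar identity $[y^{k+1}]_\ell = [y^k]_\ell + \frac{1}{N}\mathbf{1}^T\zeta^{k+1}(\ell)$ (consistency with Lemma~\ref{lemma:true_agg} is automatic because each $W(k)$ is doubly stochastic).

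The algebraic heart is to rewrite the residual $v^{k+1}(\ell) - [y^{k+1}]_\ell\mathbf{1}$ in terms of $D(k) = W(k) - \frac{1}{N}\mathbf{1}\mathbf{1}^T$. Using $W(k)\mathbf{1} = \mathbf{1}$ and $\frac{1}{N}\mathbf{1}^Tv^k(\ell) = [y^k]_\ell$, one checks the key identity $D(k)\bigl(v^k(\ell) - [y^k]_\ell\mathbf{1}\bigr) = W(k)v^k(\ell) - [y^k]_\ell\mathbf{1}$. Substituting the recursion and the evolution of $[y^{k+1}]_\ell$ then collapses the residual into
\[
v^{k+1}(\ell) - [y^{k+1}]_\ell\mathbf{1} = D(k)\bigl(v^k(\ell) - [y^k]_\ell\mathbf{1}\bigr) + \Bigl(\mathbb{I} - \tfrac{1}{N}\mathbf{1}\mathbf{1}^T\Bigr)\zeta^{k+1}(\ell),
\]
so that the triangle inequality immediately yields the first term on the right-hand side of the claim and leaves me to bound the projected increment $\bigl\|(\mathbb{I} - \frac{1}{N}\mathbf{1}\mathbf{1}^T)\zeta^{k+1}(\ell)\bigr\|$.

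Finally, I would bound that projected increment. The matrix $P = \mathbb{I} - \frac{1}{N}\mathbf{1}\mathbf{1}^T$ is the orthogonal projection onto $\mathbf{1}^{\perp}$, hence $\|P\| = 1$ and $\|P\zeta^{k+1}(\ell)\| \le \|\zeta^{k+1}(\ell)\|$. By~\eqref{eqn:gossip_noup} only the $I^k$ and $J^k$ coordinates of $\zeta^{k+1}(\ell) = x^{k+1}(\ell) - x^k(\ell)$ are nonzero, and for $i \in \{I^k, J^k\}$ the non-expansiveness of $\Pi_{K_i}$ applied to~\eqref{eqn:gossip_x_algo}, together with Lemma~\ref{cor:fibound}, gives $|[x_i^{k+1}-x_i^k]_\ell| \le \|x_i^{k+1}-x_i^k\| \le C\alpha_{k,i}$. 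Collecting the two surviving terms yields $\|\zeta^{k+1}(\ell)\| \le C\sqrt{\alpha_{k,I^k}^2 + \alpha_{k,J^k}^2} \le \sqrt{2}\,C\max_{i}\alpha_{k,i} \le \sqrt{2}\,C\sum_{i=1}^N\alpha_{k,i}$, which is the desired bound. I expect the only delicate point to be the bookkeeping that verifies Lemma~\ref{cor:fibound} indeed applies under the gossip matrices: this is where the double stochasticity of $W(k)$ is used to guarantee $Ny^k = \sum_i x_i^k$ and hence the uniform bound $C$, after which the remaining steps are routine linear algebra and the standard projection estimate.
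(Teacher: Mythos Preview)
Your proposal is correct and follows essentially the same route as the paper: you derive the identical decomposition $v^{k+1}(\ell)-[y^{k+1}]_\ell\mathbf{1}=D(k)(v^k(\ell)-[y^k]_\ell\mathbf{1})+(\mathbb{I}-\tfrac{1}{N}\mathbf{1}\mathbf{1}^T)\zeta^{k+1}(\ell)$ via double stochasticity, apply the triangle inequality and the unit norm of the projection, and then bound $\|\zeta^{k+1}(\ell)\|$ using non-expansiveness together with Lemma~\ref{cor:fibound}. The only cosmetic difference is that the paper bounds $\|\zeta^{k+1}(\ell)\|$ via $\sqrt{2}\,C\sqrt{\sum_i\alpha_{k,i}^2}\le\sqrt{2}\,C\sum_i\alpha_{k,i}$ whereas you pass through $\sqrt{2}\,C\max_i\alpha_{k,i}$; both chains are valid and end at the same place.
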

\begin{proof}
We fix an arbitrary coordinate $\ell$.
By the decision update rule of~\eqref{eqn:gossip_x_algo}, the $i$th coordinate 
of the vector $\zeta^{k+1}(\ell)$ is 
$[\zeta^{k+1}(\ell)]_i 
= [\left(\Pi_{K_i}[x_i^k-\alpha_{k,i} F_i(x_i^k,N\hat v^k_i)] 
- x^k_i\right)\mathbbm{1}_{\{i \in \{I^k,J^k\}\}}]_\ell.$ 
Since $y^{k+1}$ is the average of the vectors 
$v^{k+1}_i$, from~\eqref{eqn:lthv} for the $\ell$th coordinate of this vector we obtain
\[
[y^{k+1}]_\ell = \frac{1}{N}\,\mathbf{1} v^{k+1}(\ell) 
=\frac{1}{N}\left(\mathbf{1}^TW(k) v^k(\ell) + \mathbf{1}^T\zeta^{k+1}(\ell)  \right),
\]
which together with~\eqref{eqn:lthv} leads us to
\begin{align*}
v^{k+1}(\ell) - [y^{k+1}]_\ell\mathbf{1}  
= \left(W(k)-\frac{1}{N}\mathbf{1}\mathbf{1}^TW(k)\right) v^k(\ell)  
+ \left(\mathbb{I}- \frac{1}{N}\mathbf{1}\mathbf{1}^T  \right)\zeta^{k+1}(\ell),
\end{align*}
where $\mathbb{I}$ is the identity matrix. 
Note that each $W(k)$ is a doubly stochastic matrix i.e., 
$W(k)\mathbf{1}=\mathbf{1}$ and $\mathbf{1}^TW(k)=\mathbf{1}^T$. Thus,
$\frac{1}{N}\mathbf{1}\mathbf{1}^TW(k)=\frac{1}{N}\mathbf{1}\mathbf{1}^T$.
Furthermore, we have
$\left(W(k)-\frac{1}{N}\mathbf{1}\mathbf{1}^T\right)\mathbf{1}=0,$ implying that 
\[
\left(W(k)-\frac{1}{N}\mathbf{1}\mathbf{1}^T\right)[y^{k}]_\ell\mathbf{1}=0.
\]
By combining the preceding two relations, using 
$\frac{1}{N}\mathbf{1}\mathbf{1}^TW(k)=\frac{1}{N}\mathbf{1}\mathbf{1}^T$, 
and letting $D(k)=W(k)-\frac{1}{N}\mathbf{1}\mathbf{1}^T$, we obtain
\begin{align*}
v^{k+1}(\ell) - [y^{k+1}]_\ell\mathbf{1}  
= D(k) ( v^k(\ell) - [y^{k}]_\ell\mathbf{1} )
+ \left(\mathbb{I}- \frac{1}{N}\mathbf{1}\mathbf{1}^T  \right)\zeta^{k+1}(\ell).
\end{align*}
Taking \us{norms}, we obtain
\begin{align}\label{eqn:relo}
\|v^{k+1}(\ell) - [y^{k+1}]_\ell\mathbf{1}\|  
\le  \| D(k) ( v^k(\ell) - [y^{k}]_\ell\mathbf{1} ) \|
+ \left\|\left(\mathbb{I}- \frac{1}{N}\mathbf{1}\mathbf{1}^T  \right)\,\zeta^{k+1}(\ell)\right\|.
\end{align}

We next estimate the last term in~\eqref{eqn:relo}.
The matrix $\mathbb{I}- \frac{1}{N}\mathbf{1}\mathbf{1}^T$ is a projection matrix 
(corresponds to the projection on the subspace orthogonal to the vector $\mathbf{1}$),
so $\|\mathbb{I}- \frac{1}{N}\mathbf{1}\mathbf{1}^T \|=1$,
implying that 
\begin{align}\label{eqn:relt}
\left\|\left(\mathbb{I}- \frac{1}{N}\mathbf{1}\mathbf{1}^T \right)\zeta^{k+1}(\ell)\right\|
\le \left\|\mathbb{I}- \frac{1}{N}\mathbf{1}\mathbf{1}^T \right\|\,\|\zeta^{k+1}(\ell)\|
=\|\zeta^{k+1}(\ell)\|.
\end{align}
From the definition of $\zeta^{k+1}(\ell)$ in~\eqref{eqn:lthv}, we see that
\[\|\zeta^{k+1}(\ell)\|^2
 =\sum_{i \in \{I^k,J^k\}} 
 \left\|\left[ \Pi_{K_i}[x_i^k-\alpha_{k,i} F_i(x_i^k,N\hat v^k_i)] - x^k_i \right]_\ell\right\|^2.\]
Using the non-expansive property of the projection operator and 
\jk{$$\alpha_{k,i}^2 \leq \sum_{i \in \{I^k,J^k\}} \alpha_{k,i}^2 \; \leq \sum_{i=1}^N\alpha_{k,i}^2,$$} we have 
\[\|\zeta^{k+1}(\ell)\|^2 
\leq  \sum_{i \in \{I^k,J^k\}} \|\alpha_{k,i}[F_i(x_i^k,N\hat v^k_i)]_\ell\|^2 \leq 
\sum_{i=1}^N \alpha_{k,i}^2\; \sum_{i \in \{I^k,J^k\}} \|F_i(x_i^k,N\hat v^k_i)\|^2.
\]
By \us{Lemma}~\ref{cor:fibound}, $\|F_i(x_i^k,N\hat v^k_i)\| \leq C$ for all $i,k$ and some $C>0$. 
This and $|\{I^k,J^k\}| =2$ imply
$\|\zeta^{k+1}(\ell)\|^2 \leq 2 C^2 \sum_{i=1}^N \alpha_{k,i}^2.$
By taking square roots we obtain
\jk{$$\|\zeta^{k+1}(\ell)\|\leq \sqrt{2} C \sqrt{\sum_{i=1}^N \alpha_{k,i}^2}\leq \sqrt{2} C\sum_{i=1}^N \alpha_{k,i}$$}
which when combined with~\eqref{eqn:relo} and~\eqref{eqn:relt} yields
\[\|v^{k+1}(\ell) - [y^{k+1}]_\ell\mathbf{1}\|  
\le  \| D(k) ( v^k(\ell) - [y^{k}]_\ell\mathbf{1} ) \|
+\jk{\sqrt{2}\, C \sum_{i=1}^N \alpha_{k,i}}.\]
 \end{proof}

Our result involves the average $y^k$ of the estimates $v^k_i,i\in\Nscr$, which will be important in
establishing the convergence of the algorithm.

\begin{lemma}\label{lem:conv_estimate}
Let Assumptions~\ref{assump:set_func}--\ref{assump:Lipschitz} and Assumption~\ref{asum:graph} hold. 
Let $v^k_i$ be given by~\eqref{eqn:gossip_mix} and~\eqref{eqn:gossip_estimate_update}, respectively, 
and let  $y^k=\frac{1}{N}\sum_{i=1}^N v^k_i$.
Then, we have
 \[
   \sum_{k=1}^\infty 
   \frac{1}{k} \,\|v^k_i-y^{k}\|^2   < \infty
   \quad \mbox{a.s.\ for all $i\in\Nscr$.}
 \]
\end{lemma}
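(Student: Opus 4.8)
The plan is to reduce the claim to a one-dimensional stochastic recursion in each coordinate and then invoke the supermartingale convergence result of Lemma~\ref{lemma:supermartingale}. Writing $q^k(\ell) = v^k(\ell) - [y^k]_\ell\mathbf{1}$ for each coordinate $\ell = 1,\ldots,n$, the identity $\sum_{i=1}^N \|v^k_i - y^k\|^2 = \sum_{\ell=1}^n \|q^k(\ell)\|^2$ (both sides equal $\sum_{\ell}\sum_{i}([v^k_i]_\ell-[y^k]_\ell)^2$) shows that $\|v^k_i - y^k\|^2 \le \sum_{\ell=1}^n \|q^k(\ell)\|^2$ for every $i$. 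Hence it suffices to prove $\sum_{k=1}^\infty \|q^k(\ell)\|^2 < \infty$ a.s.\ for each fixed $\ell$, which is in fact slightly stronger than the asserted bound since $\frac1k\le 1$.

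Fixing $\ell$ and setting $a_k = \|q^k(\ell)\|$, Lemma~\ref{lem:v_estimate} supplies the recursion $a_{k+1} \le \|D(k)q^k(\ell)\| + b_k$ with $b_k = \sqrt{2}\,C\sum_{i=1}^N \alpha_{k,i}$. I would square this and take the conditional expectation with respect to $\mathcal{F}_k$. Since $q^k(\ell)$ is $\mathcal{F}_k$-measurable while the gossip selection $(I^k,J^k)$ generating $D(k)$ is independent of $\mathcal{F}_k$, the variance bound~\eqref{eqn:lam} holds conditionally, $\mathbb{E}[\|D(k)q^k(\ell)\|^2\mid\mathcal{F}_k]\le\lambda a_k^2$, and likewise~\eqref{eqn:lamn} gives $\mathbb{E}[\|D(k)q^k(\ell)\|\mid\mathcal{F}_k]\le\sqrt{\lambda}\,a_k$. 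Absorbing the resulting cross term by the weighted inequality $2\sqrt{\lambda}\,a_kb_k\le\varepsilon a_k^2+(\lambda/\varepsilon)b_k^2$ with $\varepsilon=(1-\lambda)/2$ yields $\mathbb{E}[a_{k+1}^2\mid\mathcal{F}_k]\le\mu a_k^2+Bb_k^2$, where $\mu=(1+\lambda)/2\in(0,1)$ and $B$ is a constant.

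It then remains to check that the forcing term is summable. Since $b_k^2\le 2C^2N\sum_{i=1}^N\alpha_{k,i}^2$, Corollary~\ref{cor-alpha}(b) gives $\sum_{k=1}^\infty b_k^2<\infty$ a.s. I would apply Lemma~\ref{lemma:supermartingale} with $V_k=a_k^2$, $u_k=0$, $\gamma_k=(1-\mu)a_k^2$ and $\beta_k=Bb_k^2$; the hypotheses $\sum_k u_k<\infty$ and $\sum_k\beta_k<\infty$ hold a.s., so the lemma yields both the convergence of $a_k^2$ and $\sum_{k=1}^\infty(1-\mu)a_k^2<\infty$ a.s. Summing over the finitely many coordinates $\ell$ produces $\sum_k\sum_\ell\|q^k(\ell)\|^2<\infty$ a.s., and the claim follows.

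The main obstacle is the conditional contraction step, which requires care on two counts. First, the conditional form of~\eqref{eqn:lam} rests on the mixing randomness at time $k$ being independent of $\mathcal{F}_k$; this holds for the gossip selection but must be argued. Second, and more delicate, the stepsizes $\alpha_{k,i}=1/\Gamma_k(i)$ count the update at time $k$ and therefore depend on $(I^k,J^k)$, so $b_k$ is not literally $\mathcal{F}_k$-measurable and is correlated with $D(k)$. I would resolve this either by bounding the cross term through Cauchy--Schwarz, $\mathbb{E}[\|D(k)q^k(\ell)\|\,b_k\mid\mathcal{F}_k]\le\sqrt{\lambda}\,a_k\sqrt{\mathbb{E}[b_k^2\mid\mathcal{F}_k]}$, or more simply through the monotonicity $\alpha_{k,i}\le\alpha_{k-1,i}$, which replaces $b_k$ by the $\mathcal{F}_k$-measurable majorant $\sqrt{2}\,C\sum_{i}\alpha_{k-1,i}$ at no cost to summability. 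An alternative route that yields the $1/k$ weight directly is to take full expectations in the recursion, iterate to obtain $\mathbb{E}[a_k^2]=O(1/k^2)$ from the geometric decay acting against the $O(1/k^2)$ forcing, and then conclude $\sum_k\frac1k a_k^2<\infty$ a.s.\ by Tonelli; this is closer to the stated form but leans on the moment estimates behind Lemma~\ref{lemma:stepsize}.
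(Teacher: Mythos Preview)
Your proof is correct and in fact establishes the stronger unweighted bound $\sum_k \sum_\ell \|q^k(\ell)\|^2<\infty$ a.s., from which the stated $1/k$-weighted sum follows trivially. The route, however, is genuinely different from the paper's. The paper keeps the recursion linear: it takes conditional expectations directly in Lemma~\ref{lem:v_estimate}, uses~\eqref{eqn:lamn} to obtain
\[
\mathbb{E}\bigl[\|v^{k+1}(\ell)-[y^{k+1}]_\ell\mathbf{1}\|\mid\mathcal{F}_k\bigr]
\le \sqrt{\lambda}\,\|v^{k}(\ell)-[y^{k}]_\ell\mathbf{1}\|
+\sqrt{2}\,C\,\mathbb{E}\!\left[\textstyle\sum_i\alpha_{k,i}\mid\mathcal{F}_k\right],
\]
then multiplies by $1/k$ and applies Lemma~\ref{lemma:supermartingale} to $V_k=\frac{1}{k}\|v^k(\ell)-[y^k]_\ell\mathbf{1}\|$ with $\gamma_k=\frac{1-\sqrt{\lambda}}{k}\|v^k(\ell)-[y^k]_\ell\mathbf{1}\|$. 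The $1/k$ weight is not cosmetic in that argument: the linear forcing term $\sum_i\alpha_{k,i}$ is only $O(1/k)$, so summability hinges on Corollary~\ref{cor-alpha}(a), $\sum_k\alpha_{k,i}/k<\infty$. By squaring first, you convert the forcing term to $b_k^2=O(1/k^2)$ and can instead invoke Corollary~\ref{cor-alpha}(b), which lets the geometric contraction do all the work and removes the need for the $1/k$ scaling. Your handling of the $\mathcal{F}_k$-measurability of $b_k$ (via the majorant $\alpha_{k-1,i}\ge\alpha_{k,i}$) is also cleaner than the paper's, which leaves $\mathbb{E}[\sum_i\alpha_{k,i}/k\mid\mathcal{F}_k]$ inside the recursion and bounds its sum through the almost-sure estimate of Lemma~\ref{lemma:stepsize}.
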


\begin{proof}
From Lemma~\ref{lem:v_estimate},
we obtain \uvs{that the following holds for $k \geq 0$} almost surely,
\begin{align*}
\|v^{k+1}(\ell) - [y^{k+1}]_\ell\mathbf{1}\|  
\le  \| D(k) ( v^k(\ell) - [y^{k}]_\ell\mathbf{1} ) \|
+ \jk{\sqrt{2}\, C \sum_{i=1}^N \alpha_{k,i}}.
\end{align*}
By taking conditional expectations with respect to $\mathcal{F}_k$,
   we obtain \uvs{that the following holds almost surely for all $k$:} 
\begin{align}\label{eqn:relf}
\mathbb{E}[\|v^{k+1}(\ell) - [y^{k+1}]_\ell\mathbf{1}\|\mid \mathcal{F}_k ] 
 \leq  \mathbb{E}[ \| D(k)(v^k(\ell) - [y^{k}]_\ell\mathbf{1} ) \|\,\left|\right.\, \mathcal{F}_k ]
 + \uvs{\sqrt{2}\, C \mathbb{E}\left[ \sum_{i=1}^N \alpha_{k,i}\mid
	 \mathcal{F}_k\right]}.
\end{align}
Note that the expectation in the term on the right hand side
is taken with respect to the randomness in the matrix $W(k)$ only.
By relation~\eqref{eqn:lamn}, we have
\[\mathbb{E}[\| D(k) ( v^k(\ell) - [y^{k}]_\ell\mathbf{1} ) \|\mid \mathcal{F}_k]
\le \sqrt{\lambda}\| v^k(\ell) - [y^{k}]_\ell\mathbf{1}\|,\]
 which combined with~\eqref{eqn:relf} yields \uvs{that the following
	 holds for all $k$ in an  almost sure sense:}  
\begin{align}\label{eqn:rels}
\mathbb{E}[ \|v^{k+1}(\ell) - [y^{k+1}]_\ell\mathbf{1}\| \mid \mathcal{F}_k ]\leq 
\sqrt{\lambda}\| v^k(\ell)-[y^{k}]_\ell\mathbf{1}\| + \jk{\sqrt{2}\, C\, \mathbb{E}\left[\sum_{i=1}^N \alpha_{k,i} \mid \mathcal{F}_k \right]}.
\end{align}

By \us{multiplying} both sides of~\eqref{eqn:rels} with $\frac{1}{k}$
and by using $\frac{1}{k+1}<\frac{1}{k}$ we find that 
almost surely for all $k$: 
\begin{align*}
 \frac{1}{k+1}\mathbb{E}[ \|v^{k+1}(\ell) - [y^{k+1}]_\ell\mathbf{1}\| \mid \mathcal{F}_k ]
 &\leq \frac{\sqrt{\lambda}}{k}\| v^k(\ell)-[y^{k}]_\ell\mathbf{1}\| 
 + \frac{\jk{\sqrt{2}\, C\, \mathbb{E}[\sum_{i=1}^N \alpha_{k,i} \mid \mathcal{F}_k ]}}{k}\cr
 &= \frac{1}{k}\| v^k(\ell)-[y^{k}]_\ell\mathbf{1}\| 
 - \frac{1-\sqrt{\lambda}}{k}\| v^k (\ell) - [y^{k}]_\ell\mathbf{1}\| \\
& + \jk{\sqrt{2}\,C\,\mathbb{E}\left[\sum_{i=1}^N\frac{\alpha_{k,i}}{k}\mid \mathcal{F}_k \right]}.
\end{align*}
Since $\lambda \in (0,1)$ we have $1-\sqrt{\lambda}>0$ \uvs{and 
	  $\sum_{k} \mathbb{E}\left[\sum_{i=1}^N\frac{\alpha_{k,i}}{k}\mid
	  \mathcal{F}_k \right] < \infty$ since $\sum_{k} \sum_{i=1}^N \frac{2}{k^2 p_i} <
	  \infty$ with probability one from Corollary~\ref{cor-alpha},
	  allowing for invoking the} supermartingale convergence lemma
(Lemma~\ref{lemma:supermartingale}). 
Therefore, we may conclude that
\[
 \sum_{k=1}^\infty  \frac{1}{k} \|v^k(\ell)-[y^{k}]_\ell\mathbf{1} \| < \infty \quad \mbox{a.s.}
\]
Recalling that $v^k(\ell)=([v_1^k]_\ell,\ldots,[v_N^k]_\ell)^T$, 
the preceding relation implies that 
\[
 \sum_{k=1}^\infty  \frac{1}{k} \left|[v_i^k]_\ell-[y^{k}]_\ell\right|< \infty \quad 
 \mbox{for all $i\in\Nscr$ a.s.}
\]
The coordinate index $\ell$ was arbitrary, so the relation is
also true for every coordinate index $\ell=1,\ldots, n$. In particular,
 since $\|v_i^k - y^{k}\|\le \sum_{\ell=1}^n|[v_i^k]_\ell - [y^{k}]_\ell|$
 we have
 \[\sum_{k=1}^\infty  \frac{1}{k} \left\|v_i^k - y^{k} \right\|
 \le \sum_{k=1}^\infty \frac{1}{k} \sum_{\ell=1}^n|[v_i^k]_\ell - [y^{k}]_\ell| < \infty
 \quad \mbox{for all $i\in\Nscr$ a.s.}
\]
\end{proof}

For the rest of the paper, we use $x^k$ to denote the vector with components $x_i^k$, $i=1,\ldots,N$,
i.e., $x^k=(x_1^k,\ldots,x_N^k)$ and we write $x^*$ for the vector $(x_1^*,\ldots,x^*_N)$.
We now show the convergence of the algorithm. 
We have the following result,
where $x^*$ denotes the unique Nash equilibrium
of the aggregative game in~\eqref{eqn:problem}. 

\begin{proposition}\label{prop:gs_convergence}
Let Assumptions~\ref{assump:set_func}--\ref{assump:Lipschitz} and Assumption~\ref{asum:graph}
hold. Then, the sequence $\{x^k\}$ generated by 
the method~\eqref{eqn:gossip_mix}--\eqref{eqn:gossip_estimate_update}
 with the stepsize $\alpha_{k,i}=\frac{1}{\Gamma_k(i)}$ converges to the (unique) $x^*$ of 
 the game almost surely.
\end{proposition}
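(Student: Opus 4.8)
The plan is to mirror the argument for the synchronous case (Proposition~\ref{prop:diffusion_convergence}), but now to work with conditional expectations with respect to $\mathcal{F}_k$ to absorb the randomness of the gossip selection, and to replace the uniform stepsize by the agent-specific $\alpha_{k,i}=1/\Gamma_k(i)$. First I would use the fixed-point characterization $x_i^*=\Pi_{K_i}[x_i^*-\alpha_{k,i}F_i(x_i^*,\bar x^*)]$ together with the non-expansiveness of $\Pi_{K_i}$, but now only for the two updating agents $i\in\{I^k,J^k\}$; the remaining agents satisfy $x_i^{k+1}=x_i^k$, so their contribution to $\|x^{k+1}-x^*\|^2$ is unchanged. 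Summing the per-agent bounds, with the selection carried by the indicators $\mathbbm{1}_{\{i\in\{I^k,J^k\}\}}$, yields a recursion for $\|x^{k+1}-x^*\|^2$. Since $\alpha_{k,i}$ and every quantity carrying the superscript $k$ are $\mathcal{F}_k$-measurable, taking $\mathbb{E}[\,\cdot\mid\mathcal{F}_k]$ replaces each indicator by $p_i=\mathbb{E}[\mathbbm{1}_{\{i\in\{I^k,J^k\}\}}\mid\mathcal{F}_k]$.

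As in the synchronous proof, each per-agent term splits into a squared ``Term 1'', bounded by a constant $\tilde C$ via Lemma~\ref{cor:fibound}, and an inner-product ``Term 2''. In Term 2 I would add and subtract $F_i(x_i^k,Ny^k)$: the Lipschitz part is bounded below by $-2\bar L_i MN\|\hat v_i^k-y^k\|$ using Assumption~\ref{assump:Lipschitz} and the compactness of $K_i$, while the remaining part is $(F_i(x_i^k,Ny^k)-F_i(x_i^*,\bar x^*))^T(x_i^k-x_i^*)$. Crucially, Lemma~\ref{lemma:true_agg} applies because the gossip matrices $W(k)$ are doubly stochastic, so $Ny^k=\bar x^k$, and after summing over $i$ this remaining part assembles into $(\phi(x^k)-\phi(x^*))^T(x^k-x^*)$.

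The key new step is to extract a clean $1/k$ coefficient in front of this monotonicity gap. After taking conditional expectations the gap carries the coefficient $2p_i\alpha_{k,i}$, so I would write $p_i\alpha_{k,i}=\tfrac1k+p_i\bigl(\alpha_{k,i}-\tfrac{1}{kp_i}\bigr)$: the dominant part contributes the nonnegative term $\gamma_k=\tfrac{2}{k}(\phi(x^k)-\phi(x^*))^T(x^k-x^*)\ge0$, while the correction is summable by Corollary~\ref{cor-alpha}(c) together with the boundedness of $\phi$ on the compact $K$. The Term-1 contribution $p_i\alpha_{k,i}^2\tilde C\le\alpha_{k,i}^2\tilde C$ is summable by Corollary~\ref{cor-alpha}(b), and the cross term $\alpha_{k,i}\,\mathbb{E}[\mathbbm{1}_{\{i\in\{I^k,J^k\}\}}\|\hat v_i^k-y^k\|\mid\mathcal{F}_k]$ is handled by the crude bound $\|\hat v_i^k-y^k\|\le\sum_m\|v_m^k-y^k\|$ and $\alpha_{k,i}\le 2/(kp_i)$ (Lemma~\ref{lemma:stepsize}), so that the summability in Lemma~\ref{lem:conv_estimate} makes it summable a.s. This casts the recursion into the form $\mathbb{E}[V_{k+1}\mid\mathcal{F}_k]\le V_k-\gamma_k+\beta_k$ with $V_k=\|x^k-x^*\|^2$ and $\sum_k\beta_k<\infty$ a.s.

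Finally I would invoke the supermartingale convergence result Lemma~\ref{lemma:supermartingale} to conclude that $\|x^k-x^*\|$ converges a.s.\ and $\sum_k\gamma_k<\infty$ a.s. Since $\sum_k 1/k=\infty$, the gap $(\phi(x^k)-\phi(x^*))^T(x^k-x^*)$ tends to zero along a subsequence; by compactness of $K$ and the continuity and strict monotonicity of $\phi$ (Assumption~\ref{ass-strict-mon}) that subsequence converges to $x^*$, and the already-established convergence of $\|x^k-x^*\|$ upgrades this to convergence of the whole sequence, exactly as in the final paragraph of Proposition~\ref{prop:diffusion_convergence}. I expect the main obstacle to be the careful bookkeeping of the gossip randomness inside the conditional expectation: specifically, verifying that the cross term and the stepsize-mismatch term are genuinely summable in the almost-sure sense rather than merely in expectation, which is precisely where Lemma~\ref{lem:conv_estimate} and Corollary~\ref{cor-alpha} enter.
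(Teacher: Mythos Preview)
Your proposal is correct and follows essentially the same route as the paper: the paper also decomposes $\alpha_{k,i}$ around $\tfrac{1}{kp_i}$ (it does so before rather than after taking $\mathbb{E}[\,\cdot\mid\mathcal{F}_k]$, and bounds $\|\hat v_i^k-y^k\|$ by the convex combination $\sum_j[W(k)]_{ij}\|v_j^k-y^k\|$ instead of your crude sum, but these differences are cosmetic), then invokes Corollary~\ref{cor-alpha}, Lemma~\ref{lem:conv_estimate}, and Lemma~\ref{lemma:supermartingale} exactly as you outline. One small caution in your bookkeeping: $\alpha_{k,i}=1/\Gamma_k(i)$ is not literally $\mathcal{F}_k$-measurable, since $\Gamma_k(i)$ already counts the update at time $k$; however, on the event $\{i\in\{I^k,J^k\}\}$ one has $\alpha_{k,i}=1/(\Gamma_{k-1}(i)+1)$, which \emph{is} $\mathcal{F}_k$-measurable, so the factorization you want goes through once written this way (the paper sidesteps this by keeping $\alpha_{k,i}$ inside the conditional expectation).
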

\begin{proof}
Under strict monotonicity of the mapping and the compactness of
	$K$, uniqueness of the equilibrium follows from
		Proposition~\ref{uniq-equil}.
Then, by the definition of $x^{k+1}_i$ we have
\begin{align*}
  \|x^{k+1}_i - x^*_i\|^2 
  = \left\|\left(\Pi_{K_i}[x_i^k-\alpha_{k,i} F_i(x_i^k,N\hat v^k_i)] - x^k_i\right)
  \mathbbm{1}_{\{i \in \{I^k,J^{k}\}\}}
  + x^k_i - x^*_i\right\|^2.
\end{align*}
Using $x^*_i = \Pi_{K_i}[x_i^*-\alpha_{k,i} F_i(x^*_i,\bar x^*)]$ and the non-expansive property of the projection operator,
we have for $i \in \{I^{k},J^{k}\},$
\begin{align*}
  \|x^{k+1}_i - x^*_i\|^2 
 & \leq \|x_i^k-\alpha_{k,i} F_i(x_i^k,N\hat v^k_i) - x^k_i - x^*_i + \alpha_{k,i} F_i(x^*_i,\bar x^*)\|^2 \nonumber \\
 & =  \|x_i^k-x^*_i \|^2 + \alpha_{k,i}^2\| F_i(x_i^k,N\hat v^k_i) -F_i(x^*_i,\bar x^*) \|^2   \cr
 &-2\alpha_{k,i} (F_i(x_i^k,N\hat v^k_i) -F_i(x^*_i,\bar x^*))^T(x^k_i-x^*_i).
\end{align*}
\uvs{By expressing $\alpha_{k,i}$ as} $\alpha_{k,i}=\left(\alpha_{k,i}- \frac{1}{kp_i}\right)
+\frac{1}{kp_i}$, we have the following 
for all \jk{$k$}:
\begin{align}
\label{eqn:gossip_iter_rel1}
  \|x^{k+1}_i - x^*_i\|^2 
 & = \|x_i^k-x^*_i \|^2 + \alpha_{k,i}^2\| F_i(x_i^k,N\hat v^k_i) -F_i(x^*_i,\bar x^*) \|^2 \nonumber\\
 & -2 \left(\alpha_{k,i}- \frac{1}{kp_i} \right)
 (F_i(x_i^k,N\hat v^k_i) -F_i(x^*_i,\bar x^*))^T(x^k_i-x^*_i)\nonumber \\
 & -\frac{2}{kp_i} (F_i(x_i^k,N\hat v^k_i) -F_i(x^*_i,\bar x^*))^T(x^k_i-x^*_i)\cr
 & \leq \|x_i^k-x^*_i \|^2 
 + \jk{\alpha_{k,i}^2}\| F_i(x_i^k,N\hat v^k_i) -F_i(x^*_i,\bar x^*) \|^2 \nonumber \\
 & +\jk{2\left|\alpha_{k,i}- \frac{1}{kp_i} \right|} \left| (F_i(x_i^k,N\hat v^k_i) -F_i(x^*_i,\bar x^*))^T(x^k_i-x^*_i)\right| \nonumber \\
 & -\frac{2}{kp_i} (F_i(x_i^k,N\hat v^k_i) -F_i(x^*_i,\bar x^*))^T(x^k_i-x^*_i).
\end{align}
By \us{Lemma}~\ref{cor:fibound} and Assumption~\ref{assump:set_func} we can see
that $\| F_i(x_i^k,N\hat v^k_i) -F_i(x^*_i,\bar x^*) \|^2 \le C_1$ for some scalar $C_1$,
and for all $i$ and $k$.  
Similarly, for the term in~\eqref{eqn:gossip_iter_rel1} involving the absolute value,
we can see that $|(F_i(x_i^k,N\hat v^k_i) -F_i(x^*_i,\bar x^*))^T(x_i^k-x_i^*)|\le C_2$
for some scalar $C_2$, and for all $i$ and $k$. 
Substituting these estimates in~\eqref{eqn:gossip_iter_rel1},  
we obtain
\begin{align}\label{eqn:rr1}
\|x^{k+1}_i - x^*_i\|^2 
 & \leq \|x_i^k-x^*_i \|^2 
 +\jk{\alpha_{k,i}^2C_1 + 2C_2\left|\alpha_{k,i}- \frac{1}{kp_i}
	 \right|} \notag \\
 & -\frac{2}{kp_i}(F_i(x_i^k,N\hat v^k_i) -F_i(x^*_i,\bar x^*))^T(x^k_i-x^*_i).
\end{align}
For the last term in the preceding relation, by adding and subtracting 
$F_i(x^k_i,Ny^k)$ and using $Ny^k = \sum_{i=1}^N x^k_i=\bar x^k$ 
(cf. Lemma~\ref{lemma:true_agg}), we write
\begin{align*}
(F_i(x_i^k,N\hat v^k_i) -F_i(x^*_i,\bar x^*))^T(x_i^k-x_i^*)
 & =  (F_i(x_i^k,N\hat v^k_i) - F_i(x^k_i,Ny^k))^T(x_i^k-x_i^*)\cr
 & + (F_i(x^k_i,\bar x^k) - F_i(x^*_i,\bar x^*) )^T(x_i^k-x_i^*)\cr
 & \ge -\|F_i(x_i^k,N\hat v^k_i) - F_i(x^k_i,Ny^k)\|\,\|x_i^k-x_i^*\|\cr
 &+ (F_i(x^k_i,\bar x^k) - F_i(x^*_i,\bar x^*) )^T(x_i^k-x_i^*)\cr
 & \ge  -\bar L_iN\|\hat v^k_i - y^k\|M
 + (F_i(x^k_i,\bar x^k) - F_i(x^*_i,\bar x^*) )^T(x_i^k-x_i^*)
\end{align*}
where we use the Lipschitz property of the mapping $F_i$
(Assumption~\ref{assump:Lipschitz}), while $M$ is a constant such that 
$\max_{x_i,z_i\in K_i}\|x_i-z_i\|\le M$ for all $i$.
The vector $\hat v_i^k$ is a convex combination of $v_j^k$ \uvs{over $j = 1,
	\hdots, N$}(cf.~\eqref{eqn:gossip_mix}). Therefore,  
by the convexity of the norm, we have 
$\|\hat v^k_i - y^k\|\le \sum_{j=1}^N[W(k)]_{ij}\|v_j^k-y^k\|$, which yields
\begin{align}\label{eqn:rr2}
(F_i(x_i^k,N\hat v^k_i) -F_i(x^*_i,\bar x^*))^T(x_i^k-x_i^*)
 & \ge  -\bar L_iNM \sum_{j=1}^N[W(k)]_{ij}\| v^k_j - y^k\|\cr
 & + (F_i(x^k_i,\bar x^k) - F_i(x^*_i,\bar x^*) )^T(x_i^k-x_i^*).\end{align}
Finally, by combining relations~\eqref{eqn:rr1} and~\eqref{eqn:rr2} we obtain
for  $i\in\{I^k,J^{k}\}$ and for all $k \geq 0$,
\begin{align*}
\|x^{k+1}_i - x^*_i\|^2 
 & \leq \|x_i^k-x^*_i \|^2 
 +\jk{\alpha_{k,i}^2C_1 + 2C_2\left|\alpha_{k,i}- \frac{1}{kp_i} \right|}
 + \frac{2}{kp_i}\bar L_iNM \sum_{j=1}^N[W(k)]_{ij}\|v^k_j - y^k\|\cr
&  -\frac{2}{kp_i}(F_i(x_i^k,\bar x^k) -F_i(x^*_i,\bar x^*))^T(x^k_i-x^*_i).
\end{align*}
Since $x^{k+1}_i=x^k_i$ when $i \not \in \{I^k,J^{k}\}$, it follows that 
$\|x^{k+1}_i - x^*_i\|^2 =  \|x_i^k-x^*_i \|^2$ for $i \not \in \{I^k,J^{k}\}$.
We combine these two cases with the fact that agent $i$ updates with probability $p_i$ 
and, thus obtain almost surely  for all $i \in \mathcal{N}$ and \jk{for all $k$}  
\begin{align}\label{eqn:est3}
 \mathbb{E}[\|x^{k+1}_i - x^*_i\|^2 \mid \mathcal{F}_{k}]
& \le  \|x_i^k-x^*_i \|^2 
 +\jk{C_1\mathbb{E}[\alpha_{k,i}^2 \mid \mathcal{F}_{k}] +
	 2C_2\mathbb{E}\left[\left|\alpha_{k,i}- \frac{1}{kp_i} \right| \mid
		 \mathcal{F}_{k}\right]} \\
& \notag
 + \frac{2}{k}\bar L_iNM\sum_{j=1}^N[W(k)]_{ij}\|v^k_j - y^k\|  -\frac{2}{k}(F_i(x_i^k,\bar x^k) -F_i(x^*_i,\bar x^*))^T(x^k_i-x^*_i).
 \end{align}
Summing relations~\eqref{eqn:est3} 
over $i=1,\ldots,N$, using the fact that $W(k)$ is doubly stochastic
 and recalling that $F_i$ are coordinate maps for $F$ 
and $F(x,\bar x)$ defines $\phi$ (cf.~\eqref{eqn:svinot} and~\eqref{eqn:phimap}), 
we further obtain for all $k$: 
\begin{align}\label{eqn:kraj}
 \mathbb{E}[\|x^{k+1} - x^*\|^2 \mid \mathcal{F}_{k}]
&  \le  \|x^k-x^*\|^2 +\jk{C_1\mathbb{E}\left[\sum_{i=1}^N\alpha_{k,i}^2\mid \mathcal{F}_{k}\right] + 2C_2\mathbb{E}\left[\sum_{i=1}^N\left|\alpha_{k,i}- \frac{1}{kp_i} \right|\mid \mathcal{F}_{k}\right]}
\cr
& + \frac{2}{k}\bar L_iNM\sum_{j=1}^N \|v^k_j - y^k\| -\frac{2}{k}(\phi(x^k) - \phi(x^*))^T(x^k-x^*),
 \end{align}
where $p_{\min}=\min_{i}p_i$ and $p_{\max}=\max_i p_i$.
We now verify that we can apply the supermartingale convergence result 
(cf.~Lemma~\ref{lemma:supermartingale}) to 
relation~\eqref{eqn:kraj}. \jk{From Corollary~\ref{cor-alpha} it follows that 
\[
\sum_{k=1}^\infty \left(C_1\mathbb{E}\left[\sum_{i=1}^N\alpha_{k,i}^2\mid \mathcal{F}_{k}\right] + 2C_2\mathbb{E}\left[\sum_{i=1}^N\left|\alpha_{k,i}- \frac{1}{kp_i} \right|\mid \mathcal{F}_{k}\right]\right) < \infty.
\]}
Further from Lemma~\ref{lem:conv_estimate}  it follows that 
$\sum_{k=1}^\infty 
\sum_{i=1}^N\frac{1}{k} \|v^k_i-y^k\| < \infty$ almost surely.
Thus,  all conditions of Lemma~\ref{lemma:supermartingale} are satisfied 
and we conclude that 
\begin{equation}\label{eqn:seq_conv}
\{\|x^k-x^*\|\} \quad \mbox{converges {\it a.s.},}
\end{equation}
\begin{equation}\label{eqn:F_conv}
 \sum_{k=1}^\infty \frac{2}{k} (\phi(x) - \phi(x^*))^T(x^k-x^*) < \infty
\quad \mbox{{\it a.s.}}
\end{equation}
Since $\{x^k\}\subset K$ and $K$ is compact (Assumption~\ref{assump:set_func}),
it follows that $\{x^k\}$ has an accumulation point in $K$.
By $\sum_{k=1}^\infty\frac{1}{k} = \infty$, relation~\eqref{eqn:F_conv} implies that
$(x^k-x^*)^T(\phi(x^{k}) - \phi(x^*))\to0$ along a subsequence almost surely, 
say $\{x^{k_\ell}\}$. Then, by the strict monotonicity of $\phi$, it follows that 
$\{x^{k_\ell}\}\to x^*$ as $\ell\to\infty$  almost surely. 
By~\eqref{eqn:seq_conv},
the entire sequence converges to $x^*$ almost surely.
\end{proof}

\subsection{Error Bounds for Constant Stepsize}\label{sec:4.3}
In this section, we investigate the properties 
of the algorithm when agents employ a deterministic
constant, albeit uncoordinated, stepsize. More specifically, our 
interest lies in establishing error bounds contingent on the
deviation of stepsize across agents. Under this setting, the stepsize is $\alpha_{k,i}=\alpha_i$
in the update rule for agents' decisions in~\eqref{eqn:gossip_x_algo}, which reduces to 
\begin{align*}
x_i^{k+1}=\left(\Pi_{K_i}[x_i^k-\alpha_{i} F_i(x_i^k,N\hat v^k_i)] - x^k_i 
 \right)\mathbbm{1}_{\{i \in \{I^k,J^k\}\}} 
+ x^k_i,
\end{align*}
where $\alpha_{i}$ is a positive constant stepsize for agent $i$. It is 
worth mentioning that the \us{rules for} mixing estimates ~\eqref{eqn:gossip_mixing} 
and \us{updating estimates}~\eqref{eqn:gossip_estimate_update} are invariant 
under this modification. Also, we allow agents to independently choose 
$\alpha_{i},$ thereby maintaining the complete decentralization feature of the gossip algorithm.  
We begin by providing an updated estimate for the disagreement among the agents. 
Our result \us{is analogous to that} of Lemma~\ref{lem:conv_estimate}.

\begin{lemma}\label{lem:conv_estimate_const}
Let Assumptions~\ref{assump:set_func}--\ref{assump:Lipschitz} and \ref{asum:graph} hold.  
Consider $\{v^k_i\}$, $i =1,\ldots,N,$ that are generated by algorithm 
in~\eqref{eqn:gossip_mix}--\eqref{eqn:gossip_estimate_update} with $\alpha_{k,i}=\alpha_i$.
 Then, for $y^k=\frac{1}{N}\sum_{i=1}^N v_i^k$ we have 
  \[  \limsup_{k \to \infty }
  \sum_{i=1}^N \mathbb{E}[\|v^{k}_i-y^{k}\|^2]  \leq 
   \frac{2n\alpha_{\max}^2 C^2}{(1-\sqrt{\lambda})^2}, 
    \qquad
   \limsup_{k \to \infty }\sum_{i=1}^N \mathbb{E}[\|v^{k}_i-y^{k}\|]  \leq 
   \frac{\sqrt{2nN}\alpha_{\max} C}{1-\sqrt{\lambda} },
   \qquad  \mbox{a.s.,}
 \]
 where $\alpha_{\max} = \max_{i} \{\alpha_i\}$, $C$ is the constant as
 in \us{Lemma}~\ref{cor:fibound},
  and $\lambda$ is as given in~\eqref{eqn:lam}.
\end{lemma}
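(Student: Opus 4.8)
The plan is to mirror the argument for Lemma~\ref{lem:conv_estimate}, but to work with a \emph{squared}-norm recursion so as to obtain a $\limsup$ bound under the constant stepsize. First I would re-run the proof of Lemma~\ref{lem:v_estimate} with $\alpha_{k,i}=\alpha_i$, taking care to use the sharper observation that only the two agents $i\in\{I^k,J^k\}$ update and that each increment satisfies $\|x_i^{k+1}-x_i^k\|\le\alpha_i\|F_i(x_i^k,N\hat v_i^k)\|\le\alpha_{\max}C$ by non-expansiveness of the projection and Lemma~\ref{cor:fibound}. This yields $\|\zeta^{k+1}(\ell)\|^2\le 2\alpha_{\max}^2C^2$ for every coordinate $\ell$, hence the per-coordinate recursion
\[
\|v^{k+1}(\ell)-[y^{k+1}]_\ell\mathbf{1}\|\le\|D(k)(v^k(\ell)-[y^k]_\ell\mathbf{1})\|+\sqrt{2}\,\alpha_{\max}C,
\]
which is the constant-stepsize analogue of Lemma~\ref{lem:v_estimate} with the tighter additive term $\sqrt{2}\,\alpha_{\max}C$ in place of $\sqrt{2}\,C\sum_i\alpha_{k,i}$. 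Retaining this $\alpha_{\max}$ (rather than $\sum_i\alpha_i$) is what keeps the final constant at $2n\alpha_{\max}^2C^2$ instead of picking up an extra factor of $N^2$.

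Next I would square this inequality and control the cross term by the weighted bound $(a+b)^2\le(1+\epsilon)a^2+(1+\epsilon^{-1})b^2$ with the choice $\epsilon=(1-\sqrt\lambda)/\sqrt\lambda$, so that $(1+\epsilon)\lambda=\sqrt\lambda$ and $1+\epsilon^{-1}=1/(1-\sqrt\lambda)$. Taking the conditional expectation given $\mathcal{F}_k$ and invoking relation~\eqref{eqn:lam}, namely $\mathbb{E}[\|D(k)z\|^2]\le\lambda\|z\|^2$ (applicable because $v^k(\ell)$ and $y^k$ are $\mathcal{F}_k$-measurable and the only randomness in $D(k)$ is through $W(k)$), gives
\[
\mathbb{E}\big[\|v^{k+1}(\ell)-[y^{k+1}]_\ell\mathbf{1}\|^2\mid\mathcal{F}_k\big]\le\sqrt\lambda\,\|v^k(\ell)-[y^k]_\ell\mathbf{1}\|^2+\frac{2\alpha_{\max}^2C^2}{1-\sqrt\lambda}.
\]
Taking total expectations and writing $u_k^{(\ell)}=\mathbb{E}[\|v^k(\ell)-[y^k]_\ell\mathbf{1}\|^2]$ reduces this to the scalar recursion $u_{k+1}^{(\ell)}\le\sqrt\lambda\,u_k^{(\ell)}+2\alpha_{\max}^2C^2/(1-\sqrt\lambda)$; iterating a contraction with factor $\sqrt\lambda\in(0,1)$ yields $\limsup_k u_k^{(\ell)}\le 2\alpha_{\max}^2C^2/(1-\sqrt\lambda)^2$.

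Then I would pass back to the agent-indexed quantities using the identity $\sum_{\ell=1}^n\|v^k(\ell)-[y^k]_\ell\mathbf{1}\|^2=\sum_{i=1}^N\|v_i^k-y^k\|^2$, which holds because both sides equal $\sum_{i,\ell}|[v_i^k]_\ell-[y^k]_\ell|^2$. Summing the per-coordinate $\limsup$ over $\ell=1,\ldots,n$ delivers the first claimed bound $\limsup_k\sum_{i=1}^N\mathbb{E}[\|v_i^k-y^k\|^2]\le 2n\alpha_{\max}^2C^2/(1-\sqrt\lambda)^2$. The second bound then follows with no new recursion: by Jensen's inequality $\mathbb{E}[\|v_i^k-y^k\|]\le\sqrt{\mathbb{E}[\|v_i^k-y^k\|^2]}$ and Cauchy--Schwarz over the $N$ agents, $\sum_i\mathbb{E}[\|v_i^k-y^k\|]\le\sqrt{N}\,\sqrt{\sum_i\mathbb{E}[\|v_i^k-y^k\|^2]}$; taking $\limsup$ and substituting the first bound gives $\sqrt{N}\cdot\sqrt{2n}\,\alpha_{\max}C/(1-\sqrt\lambda)=\sqrt{2nN}\,\alpha_{\max}C/(1-\sqrt\lambda)$.

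The main obstacle is the cross term produced by squaring the recursion: a naive triangle-inequality-then-square loses the contraction, so the weighted split with the tuned $\epsilon$ is essential both to recover a genuine contraction at rate $\sqrt\lambda$ and to land exactly the denominator $(1-\sqrt\lambda)^2$. A secondary point requiring care is re-deriving the sharp $\sqrt{2}\,\alpha_{\max}C$ increment bound rather than directly quoting Lemma~\ref{lem:v_estimate}, since only that sharper constant produces the stated coefficient $n$ rather than an $N$-inflated one.
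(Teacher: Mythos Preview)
Your proof is correct and lands on exactly the stated constants, but it takes a different route from the paper's. The paper first runs a \emph{linear} (unsquared) recursion: from the same base inequality $\|v^{k+1}(\ell)-[y^{k+1}]_\ell\mathbf{1}\|\le\|D(k)(v^k(\ell)-[y^k]_\ell\mathbf{1})\|+\sqrt{2}\,\alpha_{\max}C$, it takes expectations using~\eqref{eqn:lamn} to get a first-moment contraction with rate $\sqrt\lambda$, iterates to obtain $\limsup_k \mathbb{E}[\|v^k(\ell)-[y^k]_\ell\mathbf{1}\|]\le \sqrt{2}\,\alpha_{\max}C/(1-\sqrt\lambda)$, and only then squares the base inequality, keeping the cross term $2\sqrt{2}\,\alpha_{\max}C\sqrt\lambda\,\mathbb{E}[\|v^k(\ell)-[y^k]_\ell\mathbf{1}\|]$ explicit and plugging the first-moment $\limsup$ into it; solving for the fixed point of the resulting squared recursion (with contraction factor $\lambda$) then yields $2\alpha_{\max}^2C^2/(1-\sqrt\lambda)^2$. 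Your weighted split with $\epsilon=(1-\sqrt\lambda)/\sqrt\lambda$ collapses these two passes into a single squared recursion with contraction factor $\sqrt\lambda$, which is more economical and avoids the auxiliary first-moment bound altogether; the paper's two-stage argument, on the other hand, makes it transparent that one factor of $1-\sqrt\lambda$ comes from the first-moment contraction and the other from the second-moment one. Your observation that the sharp increment bound $\|\zeta^{k+1}(\ell)\|\le\sqrt{2}\,\alpha_{\max}C$ (rather than the $\sqrt{2}\,C\sum_i\alpha_i$ of Lemma~\ref{lem:v_estimate} as written) is needed to get the stated coefficient is well taken; the paper invokes Lemma~\ref{lem:v_estimate} but silently uses this sharper estimate. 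The derivation of the second claimed bound via Jensen plus Cauchy--Schwarz over $i$ matches the paper's.
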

\begin{proof} We fix an arbitrary index $\ell$.
By~Lemma~\ref{lem:v_estimate} with $\alpha_{k,i}=\alpha_i$, we have for all $k\ge0$,
\begin{align}\label{eqn:base1}
\|v^{k+1}(\ell) - [y^{k+1}]_\ell\mathbf{1}\|  
\le  \|D(k) ( v^k(\ell) - [y^{k}]_\ell\mathbf{1} )\|
+ \sqrt{2} C\alpha_{\max},\end{align}
where $D(k)=W(k)-\frac{1}{N}\mathbf{1}\mathbf{1}^T$ and $\alpha_{\max}= \max_i\alpha_{i}$.
Note that by relation~\eqref{eqn:lamn} we have
\begin{align}\label{eqn:exp1}
\mathbb{E}[\|D(k)(v^k(\ell) - [y^{k}]_\ell\mathbf{1}) \|]
=\mathbb{E}\left[\mathbb{E}[\|D(k)(v^k(\ell) - [y^{k}]_\ell\mathbf{1}) \|\mid\mathcal{F}_k]\right]
\le \sqrt{\lambda}\,\mathbb{E}[\|v^k(\ell) - [y^{k}]_\ell\mathbf{1} \|].\end{align}
Thus, by taking the expectation of \us{both} sides in~\eqref{eqn:base1}, we obtain
\begin{align*}
\mathbb{E}\left[\|v^{k+1}(\ell) - [y^{k+1}]_\ell\mathbf{1}\|\right]
&\le \sqrt{\lambda}\mathbb{E}\left[\|v^k(\ell) - [y^{k}]_\ell\mathbf{1}\|\right]
+ \sqrt{2} C\alpha_{\max}\qquad\hbox{for all }k\ge0,
\end{align*}
which by iterative recursion leads to
\begin{align*}
\mathbb{E}\left[\|v^{k+1}(\ell) - [y^{k+1}]_\ell\mathbf{1}\|\right]
&\le \left(\sqrt{\lambda}\right)^{k+1}\mathbb{E}\left[\|v^0(\ell) - [y^{0}]_\ell\mathbf{1}\|\right]
+ \sqrt{2} C\alpha_{\max}\sum_{s=0}^k(\sqrt{\lambda})^s\qquad\hbox{for all }k\ge0.
\end{align*}
Thus, by letting $k\to\infty$, we obtain the following limiting result
\begin{equation}
\label{eqn:misalign_const}
\limsup_{k \to \infty} \mathbb{E}[ \|v^{k+1}(\ell)- [y^{k+1}]_\ell\mathbf{1}\| ]
\leq  \frac{\sqrt{2}C\alpha_{\max}}{1-\sqrt{\lambda}}.
\end{equation}
By taking the squares of both sides in relation~\eqref{eqn:base1}, we find
\begin{align*}
\|v^{k+1}(\ell) - [y^{k+1}]_\ell\mathbf{1}\|^2  
\le  \|D(k)(v^k(\ell) - [y^{k}]_\ell\mathbf{1}) \|^2 +
2\sqrt{2} C\alpha_{\max} \|D(k)(v^k(\ell) - [y^{k}]_\ell\mathbf{1}) \|
+2 C^2\alpha^2_{\max}.\end{align*}
Taking the expectation on both sides in the preceding relation and using
estimate~\eqref{eqn:exp1},
we obtain
\begin{align}\label{eqn:basesq}
\mathbb{E}[\|v^{k+1}(\ell) - [y^{k+1}]_\ell\mathbf{1}\|^2]  
&\le \mathbb{E}[\|D(k)(v^k(\ell) - [y^{k}]_\ell\mathbf{1})\|^2] +
2\sqrt{2} C\alpha_{\max} \sqrt{\lambda}\mathbb{E}[\|v^k(\ell) - [y^{k}]_\ell\mathbf{1}\|]
+2 C^2\alpha^2_{\max}\notag \\
&\le \lambda\mathbb{E}[\|v^k(\ell) - [y^{k}]_\ell\mathbf{1}\|^2] +
2\sqrt{2} C\alpha_{\max} \sqrt{\lambda}\mathbb{E}[\|v^k(\ell) \notag \\
& - [y^{k}]_\ell\mathbf{1}\|]
+2 C^2\alpha^2_{\max}
\end{align}
where the last inequality follows by 
\[\mathbb{E}[\|D(k)(v^k(\ell) - [y^{k}]_\ell\mathbf{1}) \|^2]
=\mathbb{E}\left[\mathbb{E}[\|D(k)(v^k(\ell) - [y^{k}]_\ell\mathbf{1}) \|^2\mid\mathcal{F}_k]\right]
\le \lambda\,\mathbb{E}[\|v^k(\ell) - [y^{k}]_\ell\mathbf{1} \|^2],\]
which is a consequence of relation~\eqref{eqn:lam}. 
Since $v_i^k$ and $y^{k}$ are convex combinations of points in 
$K_1,\ldots,K_N$ and each $K_i$ is compact, the sequence $\{\|v^{k+1}(\ell) - [y^{k+1}]_\ell\mathbf{1}\|\}$ is bounded, implying that so is the sequence $\{\mathbb{E}[\|v^{k+1}(\ell) - [y^{k+1}]_\ell\mathbf{1}\|]\}$.
Thus, $\limsup_{k \to \infty} \mathbb{E}[ \|v^{k+1}(\ell)- [y^{k+1}]_\ell\mathbf{1}\|^2 ]$ exists,
and let us denote this limit by $S$. Letting $k\to\infty$ in relation~\eqref{eqn:basesq} and 
using~\eqref{eqn:misalign_const}, we obtain
\[S\le 
\lambda S +
\left(\frac{2\sqrt{\lambda}}{1-\sqrt{\lambda}}+1\right)2 C^2\alpha^2_{\max}
=\lambda S +\frac{1+\sqrt{\lambda} }{1-\sqrt{\lambda} }
2 C^2\alpha^2_{\max},\]
which upon solving for $S$ and recalling the notation yields
\[\limsup_{k \to \infty} \mathbb{E}[ \|v^{k+1}(\ell)- [y^{k+1}]_\ell\mathbf{1}\|^2 ]
\le \frac{1+\sqrt{\lambda} }{(1-\lambda)(1-\sqrt{\lambda}) }2 C^2\alpha^2_{\max}
= \frac{1}{(1-\sqrt{\lambda})^2 }2 C^2\alpha^2_{\max}.\]
The preceding relation is true for any $\ell$. Thus, since limsup is invariant 
under the index-shift, we have
\[\limsup_{k \to \infty} \sum_{\ell=1}^n\mathbb{E}[ \|v^{k}(\ell)- [y^{k}]_\ell\mathbf{1}\|^2 ]
\le \sum_{\ell=1}^n\limsup_{k \to \infty} \mathbb{E}[\|v^{k}(\ell)- [y^{k}]_\ell\mathbf{1}\|^2 ]
\le \frac{2nC^2\alpha^2_{\max}}{(1-\sqrt{\lambda})^2 },\]
and  by the linearity of the expectation, it follows
\[\limsup_{k \to \infty} \mathbb{E}\left[ \sum_{\ell=1}^n\|v^{k}(\ell)- [y^{k}]_\ell\mathbf{1}\|^2 \right]
\le \frac{2nC^2\alpha^2_{\max}}{(1-\sqrt{\lambda})^2 }.\]
Recalling that vector $v^{k}(\ell)$ consists of the $\ell$th coordinates of the vectors
$v_1^k,\ldots,v_N^k$ (i.e., $v^k(\ell)=([v_1^k]_\ell,\ldots,[v_N^k]_\ell)^T$), 
we see that $\|\sum_{\ell=1}^n\|v^{k}(\ell)- [y^{k}]_\ell\mathbf{1}\|^2=
\sum_{i=1}^N\|v^k - y^k\|^2$. Hence, the preceding relation is equivalent to
\[\limsup_{k \to \infty} \sum_{i=1}^N\mathbb{E}[ \|v_i^{k} - y^{k}\|^2]
\le \frac{2nC^2\alpha^2_{\max}}{(1-\sqrt{\lambda})^2 },\]
which is the first relation stated in the lemma.
In particular, the preceding relation implies that
\begin{align}\label{eqn:estim1}
\limsup_{k \to \infty} \sqrt{\sum_{i=1}^N\mathbb{E}[ \|v_i^{k} - y^{k}\|^2]}
\le \frac{\sqrt{2n}\,C\alpha_{\max} }{1-\sqrt{\lambda}}.\end{align}
On the other hand, by Holders' inequality we have 
\begin{align*}
\sum_{i=1}^N\mathbb{E}[ \|v_i^{k} - y^{k}\|]
\le\sqrt{N}\sqrt{\sum_{i=1}^N \mathbb{E}[ \|v_i^{k} - y^{k}\|^2]}.\end{align*}
from which by taking the limit as $k\to\infty$ and using~\eqref{eqn:estim1},
we obtain 
\[\limsup_{k \to \infty }
  \sum_{i=1}^N \mathbb{E}[\|v^k_i-y^{k}\|]  
  \leq \frac{\sqrt{2nN}\,C\alpha_{\max} }{1-\sqrt{\lambda}}.\]
\end{proof}

We now estimate the limiting error of the algorithm under the
	additional assumption of strong monotonicity of the mapping
		$\phi$. For this result, we \us{also} assume an additional Lipschitz property
	for the maps $F_i$, as given below. 
\begin{assumption}\label{assump:Lipschitz_more}
Each mapping $F_i(x_i, u)$ is uniformly Lipschitz continuous in 
$x_i$ over $K_i$, for every fixed
   $u\in \bar K$ i.e., for some $L_i>0$ and for all $x_i,y_i\in K_i$,
   \[
   \|F_i(x_{i},u)-F_i(y_{i}, u)\| \leq L_i\|x_{i}-y_{i}\|.
   \]
\end{assumption}

We have the following result. 
 
\begin{proposition}\label{prop:gs_convergence_const}
Let Assumptions~\ref{assump:set_func}--\ref{assump:Lipschitz}, 
\ref{asum:graph}, and~\ref{assump:Lipschitz_more} hold, and let 
the mapping $\phi$ be strongly monotone over the set $K$ with a constant $\mu>0$,
in the following sense:
\[(\phi(x)-\phi(y))^T(x-y) \geq \mu \|x-y\|^2\qquad\hbox{for all $x, y\in K$}.\]
Consider the sequence $\{x^k\}$ 
generated by the method~\eqref{eqn:gossip_mix}--\eqref{eqn:gossip_estimate_update} with 
$\alpha_{k,i}=\alpha_i$. Suppose that the stepsizes $\alpha_i$ are such that
\begin{align}\label{eqn:step_cond_rel}
0<1-2\mu p_{\min}\alpha_{\min}+2p_{\max} (\max_i L_i)(\alpha_{\max}-\alpha_{\min})<1,
\end{align}
where for $i=1,\ldots,N$, $L_i$ are Lipshitz constants from Assumption~\ref{assump:Lipschitz_more}, 
$\displaystyle \alpha_{\max} = \max_{i}\alpha_i,$ 
$\displaystyle \alpha_{\min} = \min_{i} \alpha_i,$
$\displaystyle p_{\max} = \max_{i} p_i$, and
$\displaystyle p_{\min} = \min_{i} p_i$.
Then, the following result holds
\[\limsup_{k\to\infty}\mathbb{E}[ \|x^{k} - x^*\|^2]
 \le 
 \frac{p_{\max}\alpha_{\max}^2 \left( 2C^2N
 + BC \frac{\sqrt{2nN}}{1-\sqrt{\lambda}} \right) }
 {\mu p_{\min}\alpha_{\min} - p_{\max} (\max_i L_i)(\alpha_{\max}-\alpha_{\min})},\]
 where $x^*$ is the unique solution of VI$(K,\phi)$, 
 $C$ is as in \us{Lemma}~\ref{cor:fibound}, $\lambda$ is as in~\eqref{eqn:lam}, and
 $B=(\max_i \bar L_i)NM$ with $\bar L_i$, $i=1,\ldots,N$, being the Lipschitz constants from 
 Assumption~\ref{assump:Lipschitz}, and $M\ge \max_{x_i,z_i\in K_i}\|x_i-z_i\|$ for all $i$.

\end{proposition}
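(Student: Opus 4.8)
The plan is to mirror the per-iterate contraction argument of Proposition~\ref{prop:gs_convergence}, but to exploit strong monotonicity in order to obtain a genuine geometric recursion $\mathbb{E}[\|x^{k+1}-x^*\|^2\mid\mathcal{F}_k]\le q\,\|x^k-x^*\|^2+\gamma_k$ with a contraction factor $q\in(0,1)$, and then to pass to the limit. First I would repeat the opening of that proof with $\alpha_{k,i}$ replaced by the constant $\alpha_i$: use $x_i^*=\Pi_{K_i}[x_i^*-\alpha_i F_i(x_i^*,\bar x^*)]$, nonexpansiveness of $\Pi_{K_i}$, the bound $\|F_i(x_i^k,N\hat v_i^k)-F_i(x_i^*,\bar x^*)\|^2\le 4C^2$ from Lemma~\ref{cor:fibound}, the splitting of the cross term by adding and subtracting $F_i(x_i^k,Ny^k)$ (with $Ny^k=\bar x^k$ by Lemma~\ref{lemma:true_agg}), Assumption~\ref{assump:Lipschitz}, and $\|\hat v_i^k-y^k\|\le\sum_j[W(k)]_{ij}\|v_j^k-y^k\|$. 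Taking $\mathbb{E}[\cdot\mid\mathcal{F}_k]$, each increment is activated with probability $p_i$, and after summing over $i$ and using double stochasticity of $W(k)$ I expect to reach, for all $k$,
\[
\mathbb{E}[\|x^{k+1}-x^*\|^2\mid\mathcal{F}_k]\le \|x^k-x^*\|^2+4C^2N p_{\max}\alpha_{\max}^2+2p_{\max}\alpha_{\max}B\sum_{j=1}^N\|v_j^k-y^k\|-2\sum_{i=1}^N p_i\alpha_i\,a_i,
\]
where $a_i=(F_i(x_i^k,\bar x^k)-F_i(x_i^*,\bar x^*))^T(x_i^k-x_i^*)$ and $B=(\max_i\bar L_i)NM$.

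The crux of the argument, and the place where the constant-step, uncoordinated setting departs from the diminishing-step analysis, is the heterogeneity of the products $p_i\alpha_i$, which blocks factoring the last sum into the global pairing $(\phi(x^k)-\phi(x^*))^T(x^k-x^*)$. My key device is to write $p_i\alpha_i=p_{\min}\alpha_{\min}+(p_i\alpha_i-p_{\min}\alpha_{\min})$ with nonnegative residual weights. The uniform part reassembles into $p_{\min}\alpha_{\min}(\phi(x^k)-\phi(x^*))^T(x^k-x^*)$, which, by strong monotonicity, is bounded below by $\mu p_{\min}\alpha_{\min}\|x^k-x^*\|^2$. For the residual $-2\sum_i(p_i\alpha_i-p_{\min}\alpha_{\min})a_i$ I would invoke the extra Lipschitz property: writing $F_i(x_i^k,\bar x^k)-F_i(x_i^*,\bar x^*)=[F_i(x_i^k,\bar x^k)-F_i(x_i^*,\bar x^k)]+[F_i(x_i^*,\bar x^k)-F_i(x_i^*,\bar x^*)]$ and using Assumptions~\ref{assump:Lipschitz_more} and~\ref{assump:Lipschitz} with compactness of $K$, one gets $|a_i|\le L_i\|x_i^k-x_i^*\|^2$ up to a bounded remainder, so the residual is dominated by $2p_{\max}(\max_i L_i)(\alpha_{\max}-\alpha_{\min})\|x^k-x^*\|^2$ plus terms absorbed into the noise $\gamma_k$. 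Collecting everything yields the recursion with
\[
q=1-2\mu p_{\min}\alpha_{\min}+2p_{\max}(\max_i L_i)(\alpha_{\max}-\alpha_{\min}),
\]
which lies in $(0,1)$ precisely by the stepsize condition~\eqref{eqn:step_cond_rel}.

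To conclude I would take full expectations and iterate the scalar inequality $\mathbb{E}[V_{k+1}]\le q\,\mathbb{E}[V_k]+\mathbb{E}[\gamma_k]$ with $V_k=\|x^k-x^*\|^2$ and $0<q<1$; since the $\limsup$ of any sequence obeying such a contraction is at most $\limsup_k\mathbb{E}[\gamma_k]/(1-q)$, it remains to evaluate $\limsup_k\mathbb{E}[\gamma_k]$. The constant part contributes $4C^2Np_{\max}\alpha_{\max}^2$, and for the disagreement part I would apply the second estimate of Lemma~\ref{lem:conv_estimate_const}, namely $\limsup_k\sum_i\mathbb{E}[\|v_i^k-y^k\|]\le \sqrt{2nN}\,\alpha_{\max}C/(1-\sqrt{\lambda})$, to control $\limsup_k 2p_{\max}\alpha_{\max}B\sum_j\mathbb{E}[\|v_j^k-y^k\|]$. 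Dividing the resulting numerator $p_{\max}\alpha_{\max}^2\bigl(4C^2N+2BC\sqrt{2nN}/(1-\sqrt{\lambda})\bigr)$ by $1-q=2\bigl(\mu p_{\min}\alpha_{\min}-p_{\max}(\max_i L_i)(\alpha_{\max}-\alpha_{\min})\bigr)$ produces exactly the stated bound; uniqueness of $x^*$ is inherited from Proposition~\ref{uniq-equil}.

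The main obstacle I anticipate is the residual-weight step: controlling $-2\sum_i(p_i\alpha_i-p_{\min}\alpha_{\min})a_i$ tightly enough that the correction stays proportional to $(\alpha_{\max}-\alpha_{\min})$ and does not overwhelm the contraction coming from strong monotonicity. This is exactly the reason the analysis must assume the additional Lipschitz property in $x_i$ (Assumption~\ref{assump:Lipschitz_more}) and must impose the stepsize-dispersion condition~\eqref{eqn:step_cond_rel}: only then is the per-step map a contraction, and only a contraction, rather than the diminishing-step supermartingale of Proposition~\ref{prop:gs_convergence}, can deliver a finite limiting error under a fixed, agent-specific steplength.
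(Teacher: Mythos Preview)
Your outline matches the paper's proof closely: the same nonexpansiveness argument, the same use of $\|F_i(\cdot)-F_i(\cdot)\|^2\le 4C^2$, the same add--subtract of $F_i(x_i^k,Ny^k)$ with Lemma~\ref{lemma:true_agg}, the same invocation of Lemma~\ref{lem:conv_estimate_const} for the disagreement, and the same passage from a geometric recursion to the $\limsup$ bound. The one place your argument diverges from the paper is the order in which you handle the stepsize and probability heterogeneity, and that discrepancy is substantive for the stated constants.

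The paper performs the split $\alpha_i=\alpha_{\min}+(\alpha_i-\alpha_{\min})$ \emph{before} taking the conditional expectation (see relation~\eqref{eqn:gossip_iter_rel12}), so the residual term carries the factor $(\alpha_i-\alpha_{\min})$; only afterward does the update probability $p_i$ enter, and it is then bounded above by $p_{\max}$ on the manifestly nonnegative residual and below by $p_{\min}$ on the uniform piece, yielding exactly $q=1-2\mu p_{\min}\alpha_{\min}+2p_{\max}(\max_iL_i)(\alpha_{\max}-\alpha_{\min})$. Your decomposition $p_i\alpha_i=p_{\min}\alpha_{\min}+(p_i\alpha_i-p_{\min}\alpha_{\min})$, applied after the expectation, produces a residual weight $p_i\alpha_i-p_{\min}\alpha_{\min}$ whose supremum is $p_{\max}\alpha_{\max}-p_{\min}\alpha_{\min}$, which in general exceeds $p_{\max}(\alpha_{\max}-\alpha_{\min})$ by $\alpha_{\min}(p_{\max}-p_{\min})$. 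That extra piece is proportional to $|a_i|\le L_i\|x_i^k-x_i^*\|^2$ and therefore cannot be ``absorbed into $\gamma_k$'' without either inflating the contraction factor or enlarging the numerator beyond the stated bound; for instance, when $\alpha_{\max}=\alpha_{\min}$ but $p_{\max}\neq p_{\min}$ your residual does not vanish while the proposition's correction term does. The fix is straightforward: do the $\alpha$-split at the per-agent, pre-expectation stage as the paper does; then the $p_i$ factor multiplies only afterward and can be crudely bounded by $p_{\max}$ on the positive residual and by $p_{\min}$ on the strongly monotone piece, recovering the exact $q$ and hence the stated denominator $1-q$.
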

\begin{proof}
Since the map is strongly monotone, there is a unique solution $x^*\in K$ to VI$(K,\phi)$ 
(see Theorem~2.3.3.\ in~\cite{Facchinei_Pang03}).
Then, by the definition of $x^{k+1}_i$ we have
\begin{align*}
  \|x^{k+1}_i - x^*_i\|^2 
  = \left\|\left(\Pi_{K_i}[x_i^k-\alpha_{i} F_i(x_i^k,N\hat v^k_i)] 
  - x^k_i\right)\mathbbm{1}_{\{i \in \{I^k,J^{k}\}\}}
  + x^k_i - x^*_i\right\|^2.
\end{align*}
Using $x^*_i = \Pi_{K_i}[x_i^*-\alpha_{i} F_i(x^*_i,\bar x^*)]$ and the non-expansive property 
of the projection operator, we have for $i \in \{I^{k},J^{k}\},$
\begin{align}
\label{eqn:gossip_iter_rel1_const}
  \|x^{k+1}_i - x^*_i\|^2 
 & \leq \|x_i^k-\alpha_{i} F_i(x_i^k,N\hat v^k_i) - x^k_i - x^*_i 
 + \alpha_{i} F_i(x^*_i,\bar x^*)\|^2 \cr
 & = \|x_i^k-x^*_i \|^2 + \alpha_{i}^2\| F_i(x_i^k,N\hat v^k_i) -F_i(x^*_i,\bar x^*) \|^2 \cr
  &- 2\alpha_{i} (F_i(x_i^k,N\hat v^k_i) -F_i(x^*_i,\bar x^*))^T(x^k_i-x^*_i).
\end{align}
By using $(a+b)^2\le 2a^2 + 2b^2$ and \us{Lemma}~\ref{cor:fibound}
we can see that
\[\| F_i(x_i^k,N\hat v^k_i) -F_i(x^*_i,\bar x^*) \|^2\le 4C^2.\]
We now approximate the inner product term by adding and subtracting 
$F_i(x^k_i,Ny^k)$ and {using} $y^k = \sum_{i=1}^N x^k_i=\bar x^k$ 
(see Lemma~\ref{lemma:true_agg}), to obtain
\begin{align*}
 (F_i(x_i^k,N\hat v^k_i) -F_i(x^*_i,\bar x^*))^T(x^k_i-x^*_i) 
  \ge & -|(F_i(x_i^k,N\hat v^k_i) -F_i(x^k_i,Ny^k))^T(x^k_i-x^*_i)| \\
 & + (F_i(x^k_i,\bar x^k) -F_i(x^*_i,\bar x^*))^T(x^k_i-x^*_i).
\end{align*}
By the Lipshitz property of the mapping $F_i$ in Assumption~\ref{assump:Lipschitz},
we have
\[|(F_i(x_i^k,N\hat v^k_i) -F_i(x^k_i,Ny^k))^T(x^k_i-x^*_i)| \le 
\bar L_iN\|\hat v^k_i-y^k\|\,\|x^k_i-x^*_i\|\le 
\bar L_iNM\|\hat v^k_i-y^k\|,\]
where $M\ge \max_{x_i,z_i}\|x_i-z_i\|$ for all $i$, which exists 
by compactness of each $K_i$.
Upon combining the preceding estimates with~\eqref{eqn:gossip_iter_rel1_const},
we obtain
\begin{align}
\label{eqn:gossip_iter_rel11}
  \|x^{k+1}_i - x^*_i\|^2 
 \leq &\|x_i^k-x^*_i \|^2 + 4\alpha_{i}^2 C^2
 +2\alpha_i\bar L_iNM \|\hat v^k_i-y^k\|\cr
  &- 2\alpha_{i} (F_i(x_i^k,\bar x^k) -F_i(x^*_i,\bar x^*))^T(x^k_i-x^*_i).
\end{align}

Now, we work with the last term in~\eqref{eqn:gossip_iter_rel11}, 
by letting $\alpha_{\min} = \min_{i} \alpha_i$, and
by adding and subtracting 
$2\alpha_{\min} (F_i(x_i^k,\bar x^k) -F_i(x^*_i,\bar x^*))^T(x^k_i-x^*_i)$,
we can see that
\begin{align}
\label{eqn:gossip_iter_rel12}
  \|x^{k+1}_i - x^*_i\|^2 
 \leq &\|x_i^k-x^*_i \|^2 + 4\alpha_{i}^2 C^2 +2\alpha_i\bar L_iNM \|\hat v^k_i-y^k\|\cr
  &+ 2(\alpha_i-\alpha_{\min})
  |(F_i(x_i^k,\bar x^k) -F_i(x^*_i,\bar x^*))^T(x^k_i-x^*_i)|\cr
  &- 2\alpha_{\min} (F_i(x_i^k,\bar x^k) -F_i(x^*_i,\bar x^*))^T(x^k_i-x^*_i).
\end{align}
By using the \jk{Cauchy-Schwarz} inequality and the Lipschitz property of $F_i$
given in Assumption~\ref{assump:Lipschitz_more}, we obtain
\begin{align}\label{eqn:estimf}
|(F_i(x_i^k,\bar x^k) -F_i(x^*_i,\bar x^*))^T(x^k_i-x^*_i)|\le L_i\|x^k_i-x^*_i\|^2.\end{align}
Further, by letting $\alpha_{\max} = \max_{i} \alpha_i$,
from~\eqref{eqn:gossip_iter_rel12} and~\eqref{eqn:estimf} by collecting 
the common terms we have for $i\in\{I^k,J^{k}\}$,
\begin{align*}  \|x^{k+1}_i - x^*_i\|^2 
 \leq &\left(1+2L_i(\alpha_{\max}-\alpha_{\min})\right)\|x_i^k-x^*_i \|^2 
 + 4\alpha_{i}^2 C^2+2\alpha_i\bar L_iNM \|\hat v^k_i-y^k\|\cr
  &- 2\alpha_{\min} (F_i(x_i^k,\bar x^k) -F_i(x^*_i,\bar x^*))^T(x^k_i-x^*_i).
\end{align*}
The fact that $x^{k+1}_i=x^k_i$ when $i \not \in \{I^k,J^{k}\}$ implies that 
$\|x^{k+1}_i - x^*_i\|^2 =  \|x_i^k-x^*_i \|^2$ for $i \not \in \{I^k,J^{k}\}$.
Next, we take the expectation in~\eqref{eqn:gossip_iter_last}, whereby we combine 
the preceding two cases and take into account that agent $i$ updates 
with probability $p_i$, and obtain
for all $i \in \mathcal{N}$,
\begin{align*}
  \mathbb{E}[\|x^{k+1}_i - x^*_i\|^2 &\mid\mathcal{F}_k]
 \leq \left(1+2p_i L_i(\alpha_{\max}-\alpha_{\min})\right)\|x_i^k-x^*_i \|^2 
 + 4p_i\alpha_{i}^2 C^2\cr
 &+2p_i\alpha_i\bar L_iNM \,\mathbb{E}[\|\hat v^k_i-y^k\|\mid\mathcal{F}_k]
  - 2p_i\alpha_{\min} (F_i(x_i^k,\bar x^k) -F_i(x^*_i,\bar x^*))^T(x^k_i-x^*_i).
\end{align*}
Since $\hat v_i^k$ is a convex combination of $v_j^k$, $j\in\Nscr$ and
 the norm is a convex function, we have 
 $\|\hat v^k_i-y^k\|\le \sum_{j=1}^N [W(k)]_{ij}\|v^k_j-y^k\|$. Thus,
$ \mathbb{E}[\|\hat v^k_i-y^k\|\mid \mathcal{F}_k]
  \le \sum_{j=1}^N \mathbb{E}[W(k)]_{ij}\|v^k_j-y^k\|.$
By using the preceding relation, $\min_i p_i=p_{\min}$, $p_{\max}=\max_i p_i$, 
and $\alpha_i\le\alpha_{\max}$,
we arrive at the following relation for all $i \in \mathcal{N}$,
\begin{align}
\label{eqn:gossip_iter_last}
 &\mathbb{E}[ \|x^{k+1}_i - x^*_i\|^2 \mid\mathcal{F}_k]
 \leq (1+2p_{\max} \max_i L_i(\alpha_{\max}-\alpha_{\min}) )\|x_i^k-x^*_i \|^2 
 + 4p_{\max}\alpha_{\max}^2 C^2 \cr
 &+2p_{\max}\alpha_{\max} B \sum_{j=1}^N \mathbb{E}[W(k)]_{ij}\|v^k_j-y^k\|
 - 2p_{\min}\alpha_{\min} (F_i(x_i^k,\bar x^k) -F_i(x^*_i,\bar x^*))^T(x^k_i-x^*_i),
\end{align}
with $B=(\max_i \bar L_i)NM$.

Summing the relations in~\eqref{eqn:gossip_iter_last} over all $i=1,\ldots,N$, 
recalling that $F_i$, $i=1,\ldots,N$ are coordinate maps for the map $F$
(see~\eqref{eqn:svinot}), which in turn defines the mapping $\phi$ 
through~\eqref{eqn:phimap}, we further obtain
\begin{align*}
 &\mathbb{E}[ \|x^{k+1} - x^*\|^2 \mid\mathcal{F}_k]
 \leq (1+2p_{\max} \max_i L_i(\alpha_{\max}-\alpha_{\min}) )\|x^k-x^*\|^2 
 + 4p_{\max}\alpha_{\max}^2 C^2N\cr
 &+2p_{\max}\alpha_{\max} B \sum_{i=1}^N\sum_{j=1}^N \mathbb{E}[W(k)]_{ij}\|v^k_j-y^k\|
 - 2p_{\min}\alpha_{\min} (\phi(x^k) - \phi(x^*))^T(x^k - x^*),
\end{align*}  
The matrix $\mathbb{E}[W(k)]$ is doubly stochastic, so we have
\[\sum_{i=1}^N\sum_{j=1}^N\mathbb{E}[W(k)]_{ij}\|v^k_j-y^k\|
=\sum_{j=1}^N\sum_{i=1}^N\mathbb{E}[W(k)]_{ij}\|v^k_j-y^k\|
=\sum_{j=1}^N\|v^k_j-y^k\|.\]
Using this relation and the strong monotonicity of the mapping $\phi$ 
with a constant $\mu,$  gathering the common terms, and taking the total expectation, we 
obtain for all $k\ge0$,
\begin{align}
\label{eqn:gossip_iter_ende}
 \mathbb{E}[ \|x^{k+1} - x^*\|^2]
 \leq q \mathbb{E}[\|x^k-x^*\|^2] + 4p_{\max}\alpha_{\max}^2 C^2N
 +2p_{\max}\alpha_{\max} B \sum_{j=1}^N \mathbb{E}[\|v^k_j-y^k\|],
\end{align} 
where $q=1-2\mu p_{\min}\alpha_{\min}+2p_{\max} \max_i L_i(\alpha_{\max}-\alpha_{\min})$.
Note that by the condition 
\[0<1-2\mu p_{\min}\alpha_{\min}+2p_{\max} \max_i
L_i(\alpha_{\max}-\alpha_{\min})<1,\]
we have $0<q<1$. We further have $\{x^k\}\subseteq K$ for a compact set $K$, so
the limit superior of $\mathbb{E}[ \|x^{k} - x^*\|^2]$ exists.
Thus, by taking the limit as $k\to\infty$ in relation~\eqref{eqn:gossip_iter_ende}
and using Lemma~\ref{lem:conv_estimate_const}, we obtain 
\begin{align*}
 \limsup_{k\to\infty}\mathbb{E}[ \|x^{k+1} - x^*\|^2]
 \leq q 
 \limsup_{k\to\infty}\mathbb{E}[\|x^k-x^*\|^2] + 4p_{\max}\alpha_{\max}^2 C^2N
 +2p_{\max}\alpha_{\max} B \frac{\sqrt{2nN}\, C\alpha_{\max}}{1-\sqrt{\lambda}},
\end{align*}
which implies the stated result.
\end{proof}

We have few comments on the result of Proposition~\ref{prop:gs_convergence_const}, as follows.
The error bound depends on the dimension $n$ of the decision variables, the number $N$ of players, the frequency with which players update their decisions (captioned by $p_{\min}$ and $p_{\max}$), and 
the network properties including the connectivity time bound $B$ and the ability to propagate the information
(captured by the value $1-\sqrt{\lambda}$).
When the network parameters $B$ and $\lambda$, and the players' update probabilities 
($p_{\min}$ and $p_{\max}$) do not depend on $N$, the error bound grows linearly with 
the number $N$ of players.

As a special case, consider the case when the agents employ an equal stepsize, 
i.e., $\alpha_{\min} =\alpha_{\max}=\alpha$ and  $\alpha$ satisfies the following condition
$0<\alpha < \frac{1}{2\mu p_{\min}}$.
Then, the result of Proposition~\ref{prop:gs_convergence_const} reduces to
\[\limsup_{k\to\infty}\mathbb{E}[ \|x^{k} - x^*\|^2]
 \le \frac{p_{\max}\alpha \left( 2 C^2N + BC \frac{\sqrt{2nN}} {1-\sqrt{\lambda}} \right) }
 {\mu p_{\min}}.\]
As another special case, consider the case when all players have equal probabilities of updating, i.e., 
$p_{\min}=p_{\max}=p.$ Then, 
we have the following result:
\[\limsup_{k\to\infty}\mathbb{E}[ \|x^{k} - x^*\|^2]
 \le \frac{ \alpha_{\max}^2 \left( 2C^2N + B C\frac{\sqrt{2nN}}{1-\sqrt{\lambda}} \right) }
 {\mu \alpha_{\min}-(\max_i L_i)(\alpha_{\max}-\alpha_{\min})},\]
When all players have equal probabilities of updating and all use equal stepsizes, 
i.e., $p_{\min}=p_{\max}=p$ and $\alpha_{\min} =\alpha_{\max}=\alpha,$ then the condition
of Proposition~\ref{prop:gs_convergence_const} reduces to $0<\alpha < \frac{1}{2\mu p},$
and the bound further simplifies to:
\[\limsup_{k\to\infty}\mathbb{E}[ \|x^{k} - x^*\|^2]
 \le \frac{\alpha} {\mu}\left( 2C^2N + B C\frac{\sqrt{2nN}}{1-\sqrt{\lambda}} \right).\]

\section{\us{Generalizations and Extensions}} 
\us{In prior sections, we have developed two algorithms for addressing a
class of Nash games. To recap, our prescribed class of
equilibrium computation schemes may accommodate a specific sublcass of
noncooperative $N-$person Nash games, qualified as {\em aggregative}.
Specifically, in such games, given $\bar x_{-i}$, the $i$th player
solves the deterministic convex program given by \eqref{eqn:problem}.
Unlike in much of prior work, players cannot observe $\bar x_{-i}$ but
may learn it through the exchange of information with their local
neighbors based on an underlying graph. This underlying graph may either
be time-varying with a connectivity requirement or be fixed. In the case
of the former, we present a synchronous scheme that mandates that every
agent synchronizes its updates and information exchanges while in the
latter case, we develop a an asynchronous gossip protocol for communication.
Under a strict monotonicity and suitably defined Lipschitzian
requirements on the map $F$ and compactness of the set $K$, we show that
both algorithms produce sequences that are guaranteed to converge to the
unique equilibrium in an almost sure sense. More succinctly, under
appropriate communication requirements,  any {\em
deterministic convex aggregative Nash game} may be addressed through such
synchronous/asynchronous techniques as long as the requirements on the
map and strategy sets hold. } \\

\us{Naturally, one  may rightly question whether the presented
schemes (and their variants) can accommodate weakening some of the
assumptions, both on the map and more generally on the model. Motivated by this concern, in Section~\ref{sec:gen}, we begin by
discussing how extensions of the algorithm can allow for accommodating
weaker assumptions on the problem setting. Subsequently, in
Section~\ref{sec:extension}, we discuss how the very nature of the
coupling across agents can also be generalized.}

\subsection{Weakening assumptions on problem parameters}\label{sec:gen}
We consider three extensions to our prescribed class of games:

\noindent {\em Strict monotonicity:} \us{The a.s. convergence theory for
both the synchronous and asynchronous schemes  rely on the strict
monotonicity of the map as asserted by Assumption \ref{ass-strict-mon}.
There are several avenues for weakening such a requirement. For
instance, one approach relies on using a regularized variant of
\eqref{eqn:x_algo}, given by 
\begin{align}\label{eqn:x_algo_reg}
 x_i^{k+1} & =\Pi_{K_i}[x_i^k-\alpha_k (F_i(x_i^k,N\hat v^k_i) +
 \epsilon_{k,i}x_i^k)], \\
\label{eqn:estimate_update_reg}
 v_i^{k+1} & = {\hat v_i^k} + x_i^{k+1}-x_i^k,
\end{align}
where $\{\epsilon_{k,i}\}$ denotes a sequence that diminishes to zero at
a prescribed rate. Such an approach has been employed in prior work
and allow for the solution of both deterministic~\cite{kannan10online}
and stochastic~\cite{koshal13regularized} monotone variational inequality
problems. An alternative approach may lie in the usage of an
extragradient framework that requires taking two, rather than one,
gradient step. Via such approaches, it
			  has been shown~\cite{juditsky2011} that the gap function
			  associated with a monotone stochastic variational
			  inequality problem  tends to
			  zero in mean. We believe
distributed counterparts of extragradient schemes hold significant
and represent a generalization of the schemes presented in this
paper.} \\

\noindent {\em Lipschitz continuity:} \us{A second assumption employed in
deriving convergence and rate statements is the (uniform) Lipschitzian
assumption as articulated in Assumption~\ref{assump:Lipschitz}. We
believe that there are at least two avenues that can be adopted in
weakening this requirement. First, there has been significant recent
work that integrates the use of local or randomized smoothing (also
		called Steklov-Sobolev smoothing) to address stochastic
variational inequalities in which the maps are not necessarily Lipschitz
continuous (cf.~\cite{yousefian13regularized}). It may well be possible
to extend such techniques to address settings where the uniformly
Lipschitzian assumption does not hold. An alternate approach may lie in
developing convergence in mean of the gap function, as adopted in
~\cite{juditsky2011} where the either Lipschitz continuity or
boundedness of the map is necessary.}\\

\noindent {\em Stochastic payoff functions:} \us{Presently, the main source of
uncertainty arises either from the evolution of the connectivity graph
(synchronous scheme) or the randomness in the choice of players that
communicate as per the gossip protocol (asynchronous) scheme. Yet, the
player objectives could also be expectation-valued. Consequently, the
equilibrium conditions are given by a stochastic variational inequality.
In such instances, one may articulate suitably defined distributed stochastic approximation
counterparts of \eqref{eqn:x_algo} to cope with such a challenge
(cf.~\cite{koshal13regularized}). We
believe convergence analysis of such schemes, while more complicated,
is likely to carry through under suitable assumptions.}

\subsection{\us{Extensions to model}}\label{sec:extension}
It may have been observed that the proposed developments in the
	earlier two sections required that the agent decisions be of the same dimension. In this \us{subsection}, we extend the realm of~\eqref{eqn:problem} and 
generalize the algorithms presented in section~\ref{sec:synchron} and 
section~\ref{sec:asynchron}.  To this end, consider the following
aggregative game
\begin{align}\label{eqn:problem_general}
\mbox{minimize} & \qquad f_i\left(x_i,\sum_{i=1}^N h_i(x_i)\right) \cr
\mbox{subject to} & \qquad x_i \in K_i,
\end{align}
where $K_i \subseteq \mathbb{R}^{n_i},$ $h_i :  K_i \to \mathbb{R}^n$. 
The mappings \jk{$f_i$} and $h_i$ are considered to be information private to player $i$. Such an 
extension allows players decisions to
	have different 
dimensionality. To recover the problem articulated by~\eqref{eqn:problem}, 
we set $h_i(x_i) = x_i$ with
$K_i \subseteq \mathbb{R}^n$ for all $i$.
Next, we discuss the generalization of the proposed distributed
algorithms to solve problem~\eqref{eqn:problem_general}.\\

\noindent {\em Synchronous Algorithm:}
To make the synchronous algorithm suitable for the generalized 
problem in~\eqref{eqn:problem_general}, the mixing step in~\eqref{eqn:estimate_mixing}
remains the same, but with a different initial condition. Namely, the
mixing in~\eqref{eqn:estimate_mixing} is initiated with
\begin{equation}\label{eqn:syn_initial_gen}
v_i^0 = h_i(x_i^0) \qquad \mbox{ for all } i=1,\ldots,N,
\end{equation}
where $x_i^0 \in K_i$ are initial players' decisions.
The iterate update of~\eqref{eqn:x_algo} and 
update \us{of the average estimate}~\eqref{eqn:estimate_update_reg} \us{are
	modified, leading to the following}:
\begin{align}\label{eqn:x_algo_gen}
 x_i^{k+1} & =\Pi_{K_i}[x_i^k-\alpha_k F_i(x_i^k,N\hat v^k_i)],\\
\label{eqn:estimate_update_gen}
 v_i^{k+1}&=\hat v_i^k + h_i(x_i^{k+1})-h_i(x_i^k),
\end{align}
where $\alpha_k$ is the stepsize, the mapping $F_i$ is given by
\begin{equation}\label{eqn:F_general}
F_i\left( x_i,\sum_{i=1}^Nh_i(x_i) \right) = \nabla_{x_i} f_i\left( x_i,\sum_{i=1}^Nh_i(x_i)\right),
\end{equation}
and $N\hat v^k_i$ in~\eqref{eqn:x_algo_gen} is an estimate \us{of} the true value $\sum_{i=1}^Nh_i(x_i)$.
\us{In the context of the extended synchronous algorithm} in the preceding discussion, we have the following result.
\begin{proposition}
Let Assumptions~\ref{assump:set_func}--\ref{assump:step_diffusion} hold for the mapping 
$\phi(x)=(\phi_1(x),\ldots,\phi_N(x))^T$ with coordinates 
$\phi_i(x)=\nabla_{x_i} f_i\left( x_i,\sum_{i=1}^Nh_i(x_i)\right)$ and
$x=(x_1^T,\ldots,x_N^T)^T$.
Then, the sequence $\{x^k\}$ generated by the method~\eqref{eqn:x_algo_gen}--\eqref{eqn:estimate_update_gen}
converges to the  (unique) solution $x^*$ of the game in~\eqref{eqn:problem_general}.
\end{proposition}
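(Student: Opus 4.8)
The plan is to mirror the argument of Proposition~\ref{prop:diffusion_convergence} step for step, replacing the decision $x_i$ by $h_i(x_i)$ everywhere the aggregate is tracked, and to re-establish the three structural lemmas on which that proof rests. First I would prove the analog of Lemma~\ref{lemma:true_agg}, namely $N y^k=\sum_{i=1}^N h_i(x_i^k)$ for all $k\ge0$. This follows by the same induction on $k$: the initialization~\eqref{eqn:syn_initial_gen} gives $\sum_j v_j^0=\sum_j h_j(x_j^0)$, and the modified estimate update~\eqref{eqn:estimate_update_gen} together with the column-stochasticity of $W(k)$ propagates the identity $\sum_j v_j^k=\sum_j h_j(x_j^k)$ through each step, with the increment $h_j(x_j^k)-h_j(x_j^{k-1})$ now playing the role of $x_j^k-x_j^{k-1}$.

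Next I would reconstruct the boundedness result of Lemma~\ref{cor:fibound}. Since each $K_i$ is compact (Assumption~\ref{assump:set_func}) and each $h_i$ is continuous, the image set $\{\sum_{i=1}^N h_i(x_i):x_i\in K_i\}$ is compact and plays the role of $\bar K$; the continuous map $F_i$ is therefore bounded on $K_i$ times this image set, and the same Lipschitz-plus-triangle-inequality argument (using Assumption~\ref{assump:Lipschitz} on $F_i$ in its aggregate argument) delivers a uniform constant $C$ with $\|F_i(x_i^k,Ny^k)\|\le C$ and $\|F_i(x_i^k,N\hat v_i^k)\|\le C$. In particular, since the iterate update~\eqref{eqn:x_algo_gen} is the same projection step as~\eqref{eqn:x_algo}, the bound $\|x_j^s-x_j^{s-1}\|\le C\alpha_{s-1}$ from~\eqref{eqn:boundxk} carries over unchanged.

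The heart of the adaptation is the analog of Lemma~\ref{lemma:yk_vk_diffusion}. Unrolling the estimate recursion through the transition matrices $\Phi(k,s)$ proceeds exactly as before, except that the increment $x_j^s-x_j^{s-1}$ is everywhere replaced by $h_j(x_j^s)-h_j(x_j^{s-1})$ and the initial term involves $h_\ell(x_\ell^0)$; invoking the geometric bound $|\frac{1}{N}-[\Phi(k,s)]_{ij}|\le\theta\beta^{k-s}$ from Lemma~\ref{lemma:weight} yields $\|y^k-\hat v_i^k\|\le\theta\beta^k M+\theta\sum_{s=1}^k\sum_{j=1}^N\beta^{k-s}\|h_j(x_j^s)-h_j(x_j^{s-1})\|$, where $M=\sum_\ell\max_{x_\ell\in K_\ell}\|h_\ell(x_\ell)\|$ is finite by compactness of $K_\ell$ and continuity of $h_\ell$. \emph{The main obstacle lies precisely here}: in the original proof the increment obeyed $\|x_j^s-x_j^{s-1}\|\le C\alpha_{s-1}$ directly, whereas now one must control $\|h_j(x_j^s)-h_j(x_j^{s-1})\|$. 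I would close this gap by observing that each $h_i$, being continuously differentiable on the compact set $K_i$ (an implicit requirement for $F_i=\nabla_{x_i}f_i$ to inherit the standing assumptions), is Lipschitz on $K_i$ with some constant $L_{h_i}$; combined with $\|x_j^s-x_j^{s-1}\|\le C\alpha_{s-1}$ this gives $\|h_j(x_j^s)-h_j(x_j^{s-1})\|\le L_{h_j}C\alpha_{s-1}$, and hence the same geometric-plus-summable bound $\|y^k-\hat v_i^k\|\le\theta\beta^k M+\theta N C'\sum_{s=1}^k\beta^{k-s}\alpha_{s-1}$ with $C'$ absorbing $\max_j L_{h_j}$. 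Were $h_i$ merely continuous, the increments would be bounded but not $O(\alpha_{s-1})$, and the summability used below could fail; this is the point where the mild additional regularity of $h_i$ is essential.

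With these three lemmas in place, the remainder is identical to Proposition~\ref{prop:diffusion_convergence}. Expanding $\|x_i^{k+1}-x_i^*\|^2$ via non-expansiveness and the fixed-point characterization $x_i^*=\Pi_{K_i}[x_i^*-\alpha_k F_i(x_i^*,\sum_j h_j(x_j^*))]$, bounding the squared-difference term by a constant and the cross term by adding and subtracting $F_i(x_i^k,Ny^k)$ (using Assumption~\ref{assump:Lipschitz} and the error bound above), summing over $i$, and invoking $Ny^k=\sum_j h_j(x_j^k)$ reproduces the recursion~\eqref{eqn:last} with $\phi$ as defined in the statement. Assumption~\ref{assump:step_diffusion} together with Lemma~\ref{lem:sum_conv} verifies $\sum_k\alpha_k\|\hat v_i^k-y^k\|<\infty$, so the deterministic case of Lemma~\ref{lemma:supermartingale} yields convergence of $\{\|x^k-x^*\|\}$ and $\sum_k\alpha_k(\phi(x^k)-\phi(x^*))^T(x^k-x^*)<\infty$. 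Strict monotonicity of $\phi$ then forces a subsequence of $\{x^k\}$ to $x^*$, and the standard subsequence argument (as in the closing lines of Proposition~\ref{prop:diffusion_convergence}) upgrades this to convergence of the entire sequence to the unique solution of~\eqref{eqn:problem_general}.
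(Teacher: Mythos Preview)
Your proposal is correct and follows the paper's own approach, which consists of the single sentence ``The proof mimics the proof of Proposition~\ref{prop:diffusion_convergence}.'' You go further by actually filling in the details the paper omits, and you correctly flag the one nontrivial point---that controlling $\|h_j(x_j^s)-h_j(x_j^{s-1})\|$ requires Lipschitz continuity of each $h_j$ on the compact set $K_j$---which the paper leaves entirely implicit.
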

\begin{proof}
The proof mimics the proof of Proposition~\ref{prop:diffusion_convergence}.
\end{proof}

\noindent {\em Asynchronous Algorithm:}
\us{We now discuss how the gossip algorithm in
	section~\ref{sec:asynchron}} \us{may} be modified. \us{While the} estimate mixing in~\eqref{eqn:gossip_mixing} remains unchanged, the initial condition 
is replaced \us{by} one given in~\eqref{eqn:syn_initial_gen}.
The iterate update of~\eqref{eqn:gossip_x_algo} and average estimate update of~\eqref{eqn:gossip_estimate_update} are modified, as follows:
\begin{align}\label{eqn:gossip_x_algo_gen} 
x_i^{k+1}&:=(\Pi_{K_i}[x_i^k-\alpha_{k,i} F_i(x_i^k,N\hat v^k_i)] - 
x^k_i )\mathbbm{1}_{\{i \in \{I^k,J^k\}\}} + x^k_i,\\
\label{eqn:gossip_estimate_update_gen}
 v_i^{k+1} &:=\hat v_i^k+h_i(x_i^{k+1})-h_i(x_i^k),
\end{align}
where $\alpha_{k,i}$ is the stepsize for user $i$ and the mapping 
$F_i$ is as defined in~\eqref{eqn:F_general}.
The following result establishes the convergence of the 
extended asynchronous algorithm.
\begin{proposition}
Let Assumptions~\ref{assump:set_func}--\ref{assump:Lipschitz} and Assumption~\ref{asum:graph} hold. 
Then, the sequence $\{x^k\}$ generated by the method
\eqref{eqn:gossip_x_algo_gen}--\eqref{eqn:gossip_estimate_update_gen} with stepsize $\alpha_{k,i}=\frac{1}
{\Gamma_k(i)}$ converges to the (unique) $x^*$ of the game almost surely.
\end{proposition}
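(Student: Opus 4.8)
The plan is to follow the proof of Proposition~\ref{prop:gs_convergence} almost verbatim, replacing the decision $x_i$ by its image $h_i(x_i)$ wherever the aggregate enters, and to isolate the two places where the generalization genuinely alters the argument. The first preparatory step is to re-establish the invariant of Lemma~\ref{lemma:true_agg} in the form $N y^k = \sum_{i=1}^N h_i(x_i^k)$ for all $k\ge0$, where $y^k=\frac{1}{N}\sum_{i=1}^N v_i^k$. This follows by the same induction on $k$: the base case is the new initialization $v_i^0 = h_i(x_i^0)$ from~\eqref{eqn:syn_initial_gen}, and the inductive step uses the modified estimate update~\eqref{eqn:gossip_estimate_update_gen}, $v_i^{k+1}=\hat v_i^k + h_i(x_i^{k+1})-h_i(x_i^k)$, together with the double stochasticity of $W(k)$ in~\eqref{eqn:gossip_mix} (so that mixing preserves $\sum_i v_i^k$). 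With this invariant in hand, the aggregate that $\phi$ sees at $x^k$ is exactly $N y^k$, since the coordinate maps $F_i$ are defined through~\eqref{eqn:F_general}. Uniqueness of $x^*$ then follows by applying Proposition~\ref{uniq-equil} to this reinterpreted $\phi$ under Assumptions~\ref{assump:set_func} and~\ref{ass-strict-mon}.

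Next I would re-derive the analogues of Lemma~\ref{cor:fibound} and of Lemmas~\ref{lem:v_estimate}--\ref{lem:conv_estimate}. Boundedness of $\{F_i(x_i^k,N\hat v_i^k)\}$ and $\{F_i(x_i^k,Ny^k)\}$ by a constant $C$ carries over once one observes that the relevant aggregate set is now $\bar K=\sum_{i=1}^N h_i(K_i)$, which is compact provided each $h_i$ is continuous on the compact $K_i$; continuity of $F_i$ together with Assumption~\ref{assump:Lipschitz} then gives both bounds as before. For the disagreement estimate, the component recursion~\eqref{eqn:lthv} becomes $v^{k+1}(\ell)=W(k)v^k(\ell)+\zeta^{k+1}(\ell)$ with the driving term $\zeta^{k+1}(\ell)$ now built from the increments $h_i(x_i^{k+1})-h_i(x_i^k)$ of the two active agents in place of $x_i^{k+1}-x_i^k$.

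The main obstacle lies precisely here: to reproduce Lemma~\ref{lem:v_estimate} I need $\|h_i(x_i^{k+1})-h_i(x_i^k)\|=O(\alpha_{k,i})$. The projected-step bound $\|x_i^{k+1}-x_i^k\|\le C\alpha_{k,i}$ still holds, so this reduces to a Lipschitz property of $h_i$ on $K_i$ — the one regularity requirement the generalized model must carry (mere continuity gives only boundedness of the increments, which is not summable at the needed rate). Granting $h_i$ Lipschitz with constant $L_i^h$, one obtains $\|\zeta^{k+1}(\ell)\|\le \sqrt{2}\,(\max_i L_i^h)\,C\sum_{i=1}^N\alpha_{k,i}$, and then the supermartingale argument underlying Lemma~\ref{lem:conv_estimate}, invoking relation~\eqref{eqn:lamn} and Corollary~\ref{cor-alpha}, yields $\sum_{k}\frac{1}{k}\|v_i^k-y^k\|<\infty$ almost surely for every $i$.

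With these two ingredients the core estimate of Proposition~\ref{prop:gs_convergence} goes through unchanged in form. Starting from $\|x_i^{k+1}-x_i^*\|^2$ and the fixed-point identity $x_i^*=\Pi_{K_i}[x_i^*-\alpha_i F_i(x_i^*,\bar x^*)]$ with $\bar x^*=\sum_j h_j(x_j^*)$, I would split $\alpha_{k,i}=(\alpha_{k,i}-\frac{1}{kp_i})+\frac{1}{kp_i}$, add and subtract $F_i(x_i^k,Ny^k)$, use $Ny^k=\sum_j h_j(x_j^k)=\bar x^k$ from the invariant, apply Assumption~\ref{assump:Lipschitz} together with the convexity bound $\|\hat v_i^k-y^k\|\le\sum_j[W(k)]_{ij}\|v_j^k-y^k\|$, and sum over $i$ using double stochasticity of $W(k)$ and the fact that the $F_i$ are the coordinate maps of $\phi$. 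Taking conditional expectations produces a supermartingale inequality whose error terms are summable by Corollary~\ref{cor-alpha} and by the disagreement estimate above, so Lemma~\ref{lemma:supermartingale} gives that $\{\|x^k-x^*\|\}$ converges almost surely and $\sum_k \frac{1}{k}(\phi(x^k)-\phi(x^*))^T(x^k-x^*)<\infty$. Since $\sum_k \frac{1}{k}=\infty$, strict monotonicity (Assumption~\ref{ass-strict-mon}) forces a subsequence of $\{x^k\}$ to converge to $x^*$, and the almost-sure convergence of $\{\|x^k-x^*\|\}$ upgrades this to convergence of the entire sequence.
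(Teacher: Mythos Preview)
Your proposal is correct and follows exactly the route the paper indicates: the paper's own proof consists of a single sentence, ``With the initial condition specified by~\eqref{eqn:syn_initial_gen}, the proof follows in a fashion similar to that of Proposition~\ref{prop:gs_convergence},'' and you have fleshed this out faithfully. You have in fact been more careful than the paper: your identification of the need for each $h_i$ to be Lipschitz on $K_i$ (so that $\|h_i(x_i^{k+1})-h_i(x_i^k)\|=O(\alpha_{k,i})$ and the analogue of Lemma~\ref{lem:v_estimate} goes through) is a genuine regularity requirement that the paper's statement and one-line proof omit but that the argument needs.
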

\begin{proof}
With the initial condition specified by~\eqref{eqn:syn_initial_gen}, 
 the  proof \us{follows in a fashion similar to that of}
Proposition~\ref{prop:gs_convergence}.
\end{proof}


\section{Numerics}\label{sec:numerics}
In this section, we examine the performance of the proposed
algorithms on a class of Nash-Cournot games. Such games represent an
instance of aggregative Nash games and in
section~\ref{sec:game_numeric}, we describe the player payoffs and
strategy sets as well as verify that they satisfy the necessary
assumptions. In section~\ref{sec:diffusion_numeric}, we discuss the
synchronous setting and present the results arising from applying our
algorithms. In section~\ref{sec:gossip_numeric}, we turn our attention
to asynchronous regime where we present our numerical experience of
applying the gossip algorithm.

\subsection{Nash-Cournot Game}\label{sec:game_numeric}
We consider a networked Nash-Cournot games which is possibly 
amongst the best known examples of an aggregative game. Specifically, the aggregate in 
such games is the total sales which is the sum of production over all the players. 
The market price is set in accord with an inverse demand 
function which depends on the aggregate of the network. A formal
description of such a game over a network is provided in
Example~\ref{ex:nash_example}. 
{Before proceeding to describe our
experimental setup, we show that Nash-Cournot games do indeed satisfy
Assumptions~\ref{assump:Lipschitz} 
and~\ref{assump:Lipschitz_more}, respectively, under some mild conditions
on the cost and price functions. It is worth pointing that we have used 
Assumption~\ref{assump:Lipschitz_more} only for the error bound results for
the asynchronous algorithm with a constant stepsize.
}

{
In the sequel, within the context of Example~\ref{ex:nash_example}, 
we let $x_{il} = (g_{il},s_{il})$ for all $l=1,\ldots,\mathcal{L}$,  
$x_i = (x_{i1}, \hdots, x_{i\mathcal{L}})$ and $x=(x_1, \ldots, x_N)^T.$ 
Further, we define coordinate maps $F_i(x_i,u)$, as follows:
\begin{equation}
\label{eqn:numeric_F}
F_i(x_i,u) =  \pmat{ F_{i1}(x_{i1},u_1) \\
					\vdots \\
					F_{i\mathcal{L}}(x_{i\mathcal{L}},u_{\mathcal{L}})}, 
					\qquad F_{il}(x_{il},u_l) = \pmat{ c'_{il}(g_{il}) \\
									-p_l(u_l) - p_l'(u_l)s_{il}},
\end{equation}
where the prime \jk{denotes} the first derivative.
We let
$F(x, u)=(F_1(x_1, u)^T, \ldots, F_N(x_N, u)^T)^T$, and
$K_i$ denote the constraint set on player $i$ decision,
$x_i$, as given in Example~\ref{ex:nash_example}.}

{We note that the Nash-Cournot game under the consideration
satisfies Assumption~\ref{assump:set_func} as long as the cost functions $c_{il}$ are convex and 
the price functions $p_l(u_l)$ are concave for all $i$ and $l$.
Furthermore, the strict convexity condition of Assumption~\ref{ass-strict-mon} is satisfied
when, for example, all price functions $p_l$ are strictly concave. This can be seen by observing that 
\begin{align*} (F(x,u)-F(\tilde x,u))^T(x-\tilde x) & = 
\sum_{l=1}^{\mathcal{L}} \sum_{i=1}^N
(c_{il}'(g_{il})-c_{il}'(\tilde g_{il}))(g_{il} - \tilde g_{il}) - 
\sum_{l=1}^{\mathcal{L}} \sum_{i=1}^N p_{l}'(u_l) (s_{il} - \tilde s_{il})^2.
\end{align*}
}

{Next, we show that the Lipschitzian
requirements on the maps $F_i$ of Assumption~\ref{assump:Lipschitz} holds 
under some mild assumptions on the cost and price
functions in Nash-Cournot games, as shown next.
}

\begin{lemma}
\label{lemma:cournot-lip}
{Consider the Nash-Cournot game described in Example~\ref{ex:nash_example}. Suppose that 
each $p_l(u_l)$ is concave and has Lipschitz continuous derivatives with a constant $M_l$ (over a 
coordinate projection of $\bar K$ on the $l$th coordinate axis).
Then, the following relation holds:
\[ \|F_i(x_i,u)-F_i(x_i,z)\| 
\leq \sqrt{2}\sqrt{\sum_{l=1}^\mathcal{L}(C_l^2+M_l^2 {\rm cap}_{il}^2)}\,\|u - z\|
\qquad\hbox{for all }u,z\in\bar K.\]
	}
\end{lemma}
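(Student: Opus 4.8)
The plan is to exploit the block structure of $F_i$ together with the observation that, within each block $F_{il}(x_{il},u_l)$, only the second entry actually depends on the aggregate $u$. First I would note that the entry $c'_{il}(g_{il})$ in~\eqref{eqn:numeric_F} is a function of $x_{il}$ alone, so it is left unchanged when the second argument is varied from $u$ to $z$. Hence forming $F_i(x_i,u)-F_i(x_i,z)$ annihilates every production ($g$) component and leaves only the price-dependent entries, giving
\begin{align*}
\|F_i(x_i,u)-F_i(x_i,z)\|^2
= \sum_{l=1}^{\mathcal{L}} \Big| \big(p_l(u_l)-p_l(z_l)\big) + s_{il}\big(p_l'(u_l)-p_l'(z_l)\big) \Big|^2 .
\end{align*}

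Next I would split each summand using the elementary inequality $(a+b)^2\le 2a^2+2b^2$ and bound the two resulting pieces separately. The difference $p_l(u_l)-p_l(z_l)$ is controlled by the Lipschitz continuity of $p_l$ itself, with a constant $C_l$; this constant is well defined since $p_l$ is concave with a Lipschitz-continuous derivative over the (compact) coordinate projection of $\bar K$, so $p_l'$ is bounded there and $p_l$ is correspondingly Lipschitz. The difference $p_l'(u_l)-p_l'(z_l)$ is controlled directly by the hypothesized Lipschitz continuity of $p_l'$ with constant $M_l$, while the multiplier $s_{il}$ is bounded by $\mathrm{cap}_{il}$. This yields
\begin{align*}
\|F_i(x_i,u)-F_i(x_i,z)\|^2 \le \sum_{l=1}^{\mathcal{L}} 2\big(C_l^2+M_l^2\,\mathrm{cap}_{il}^2\big)\,|u_l-z_l|^2 .
\end{align*}

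To conclude, I would bound each $|u_l-z_l|^2$ by $\max_{l'}|u_{l'}-z_{l'}|^2\le\|u-z\|^2$, pull this common factor out of the sum, and take square roots, which produces exactly the claimed estimate with constant $\sqrt{2}\,\sqrt{\sum_{l=1}^{\mathcal{L}}(C_l^2+M_l^2\,\mathrm{cap}_{il}^2)}$.

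The routine parts are the two Lipschitz estimates and the $(a+b)^2$ split. The step I expect to require the most care is the justification of the two quantities that are not spelled out in the statement: the constant $C_l$, which I must derive from concavity of $p_l$ together with the Lipschitz property of $p_l'$ over a compact set (so that $p_l'$ is bounded and $p_l$ is Lipschitz), and the uniform bound $s_{il}\le\mathrm{cap}_{il}$, which must be read off from the nonnegativity and capacity constraints in~\eqref{eqn:ncproblem} defining $K_i$ rather than simply assumed.
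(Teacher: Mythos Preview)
Your proposal is correct and follows essentially the same route as the paper: cancel the $c'_{il}$ components, apply $(a+b)^2\le 2a^2+2b^2$ to the surviving price term, invoke boundedness of $p_l'$ on the compact coordinate projection to get the constant $C_l$, use the Lipschitz constant $M_l$ for $p_l'$ together with $s_{il}\le\mathrm{cap}_{il}$, and then combine across $l$ via $|u_l-z_l|^2\le\|u-z\|^2$ (the paper phrases this last step as a H\"older inequality, which amounts to the same bound). The only cosmetic difference is that the paper first derives a per-block estimate for each $\|F_{il}(x_{il},u_l)-F_{il}(x_{il},z_l)\|$ and then aggregates, whereas you write the full squared norm at the outset.
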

\begin{proof}
\an{
This result follows directly from the definition of the coordinate maps $F_{il}(x_{il},u)$ and
recalling that $x_{il} = (g_{il},s_{il})$. In particular, for each $i,\ell$ we have 
\begin{align*}
\|F_{il}(x_{il},u_l)-F_{il}(x_{il},z_l)\| 
& = \sqrt{|c_{il}'(g_{il}) - c_{il}'(g_{il})|^2
+|p_l(u_l) + p'_l(u_l)s_{il}-p_l(z_l)-p'_l(z_l)s_{il}|^2} \cr
& = \sqrt{|(p_l(u_l) - p_l(z_l))+ (p'_l(u_l) - p'_l(z_l))s_{il}|^2} \cr
&\le \sqrt{2}\sqrt{|p_l(u_l) - p_l(z_l)|^2+ |p'_l(u_l) - p'_l(z_l)|^2s_{il}^2}, 
\end{align*}
where the inequality follows from $(a+b)^2\le 2(a^2+b^2)$.
Since $\bar K$ is compact and each $p_l$ has continuous derivatives, it follows that 
there exists a constant $C_l$ for every $l$ such that 
\[|p'_l(u_l)|\le C_l\qquad\hbox{for all $u_l$ with $u=(u_1,\ldots,u_\mathcal{L})^T\in \bar K$}.\]
Then, by using concavity of $p_l$, we can see that 
$|p_l(u_l) - p_l(z_l)|\le C_l|u_l - z_l|$ implying that 
\begin{align*}
\|F_{il}(x_{il},u_l)-F_{il}(x_{il},z_l)\| 
&\le \sqrt{2}\sqrt{C_l^2|u_l - z_l|^2 + |p'_l(u_l) - p'_l(z_l)|^2s_{il}^2}\cr
&\le \sqrt{2}\sqrt{(C_l^2+M_l^2 s_{il}^2)}|u_l - z_l|,
\end{align*}
where the last inequality is obtained by using the Lipschitz property of the  derivative $p'_l(u_l)$.
From the structure of constraints we have $s_{il}\le {\rm cap}_{il}$ yielding
\begin{align*}
\|F_{il}(x_{il},u_l)-F_{il}(x_{il},z_l)\| 
&\le \sqrt{2}\sqrt{C_l^2+M_l^2 {\rm cap}_{il}^2}\,|u_l - z_l|\qquad\hbox{for all } l.
\end{align*}
Further, by using H\"older's inequality, and 
recalling that $x_i=(x_{i1},\ldots,x_{i\mathcal{L}})$ and $u=(u_1,\ldots,u_{\mathcal{L}})$,
from the preceding relation we obtain
\begin{align*}
\|F_i(x_i, u) - F_i(x_i,z)\| 
& = \sqrt{\sum_{l=1}^\mathcal{L} \|F_{il}(x_{il},u_l)-F_{il}(x_{il},z_l)\|^2 }
 \leq \sqrt{2}\sqrt{\sum_{l=1}^\mathcal{L}(C_l^2+M_l^2 {\rm cap}_{il}^2)}\,\|u - z\|.
\end{align*}
}
\end{proof}

\us{We now show that $F_i(x,u)$ is Lipschitz continuous in $x$ for every
	$u$.}
\begin{lemma}
\label{lemma:cournot-lip2}
{Consider the Nash-Cournot game described in Example~\ref{ex:nash_example}. 
Suppose that each $c_{il}'$ is Lipschitz continuous with a constant $L_{il}$ and 
$|p'_l(u)| \leq \bar p_l$  for some scalar $\bar p_l$ and for all $u\in\bar K$. 
Then, the following relation holds for all $i$,
\[\|F_{i}(x_{i},u)-F_{i}(\tilde x_{i}, u)\| 
\leq \sqrt{\sum_{l =1}^{\mathcal{L}}\left( L_{il}^2 + \bar p_l^2\right)}\,
\|x_{i} - \tilde x_{i}\|\qquad\hbox{for all }x_i,\tilde x_i\in K_i.\]
}
\end{lemma}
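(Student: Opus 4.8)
The plan is to follow the same coordinate-by-coordinate strategy used in the proof of Lemma~\ref{lemma:cournot-lip}, but now taking differences in the decision $x_i$ rather than in the aggregate $u$. The starting point is the explicit form of the coordinate maps in~\eqref{eqn:numeric_F}, recalling that $x_{il}=(g_{il},s_{il})$, that $x_i=(x_{i1},\ldots,x_{i\mathcal{L}})$, and that $u=(u_1,\ldots,u_\mathcal{L})$ is held \emph{fixed}. The key structural observation is that because $u$ is fixed, forming the difference $F_{il}(x_{il},u_l)-F_{il}(\tilde x_{il},u_l)$ annihilates the term $-p_l(u_l)$, leaving only
\[
F_{il}(x_{il},u_l)-F_{il}(\tilde x_{il},u_l)=\pmat{c_{il}'(g_{il})-c_{il}'(\tilde g_{il})\\ -p_l'(u_l)(s_{il}-\tilde s_{il})}.
\]
This cancellation is precisely what makes the resulting constant depend on $L_{il}$ and $\bar p_l$ but not on $p_l$ itself.

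Next I would bound the squared norm of each coordinate block. Using the Lipschitz continuity of $c_{il}'$ with constant $L_{il}$ and the uniform bound $|p_l'(u_l)|\le \bar p_l$ over $\bar K$, I obtain
\[
\|F_{il}(x_{il},u_l)-F_{il}(\tilde x_{il},u_l)\|^2
\le L_{il}^2|g_{il}-\tilde g_{il}|^2+\bar p_l^2|s_{il}-\tilde s_{il}|^2.
\]
Summing over $l=1,\ldots,\mathcal{L}$ and recalling that the blocks $F_{il}$ stack to form $F_i$ (cf.~\eqref{eqn:numeric_F}) yields
\[
\|F_i(x_i,u)-F_i(\tilde x_i,u)\|^2\le \sum_{l=1}^\mathcal{L}\left(L_{il}^2|g_{il}-\tilde g_{il}|^2+\bar p_l^2|s_{il}-\tilde s_{il}|^2\right).
\]

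The final step, and the only place requiring a little care, is extracting the factored constant $\sum_l(L_{il}^2+\bar p_l^2)$. Here I would exploit that each scalar $|g_{il}-\tilde g_{il}|$ and $|s_{il}-\tilde s_{il}|$ is a single coordinate of $x_i-\tilde x_i$, so that both quantities are dominated by the full norm $\|x_i-\tilde x_i\|$. Replacing $|g_{il}-\tilde g_{il}|^2$ and $|s_{il}-\tilde s_{il}|^2$ by $\|x_i-\tilde x_i\|^2$ gives
\[
\|F_i(x_i,u)-F_i(\tilde x_i,u)\|^2\le \left(\sum_{l=1}^\mathcal{L}(L_{il}^2+\bar p_l^2)\right)\|x_i-\tilde x_i\|^2,
\]
and taking square roots produces the claimed bound. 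I do not anticipate a genuine obstacle; the proof is essentially a bookkeeping argument parallel to Lemma~\ref{lemma:cournot-lip}, and the one subtlety is arranging the per-coordinate differences so they are correctly absorbed into $\|x_i-\tilde x_i\|$, yielding the additive constant $L_{il}^2+\bar p_l^2$ rather than a weaker $\max_l$-type bound.
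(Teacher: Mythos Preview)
Your proof is correct and follows essentially the same route as the paper: compute the block difference $F_{il}(x_{il},u_l)-F_{il}(\tilde x_{il},u_l)$, bound it using the Lipschitz constant $L_{il}$ and the derivative bound $\bar p_l$, and then aggregate across $l$. The only cosmetic difference is that the paper first groups the two coordinates into $\|x_{il}-\tilde x_{il}\|$ via Cauchy--Schwarz (which it calls H\"older) before bounding by $\|x_i-\tilde x_i\|$, whereas you bound each scalar coordinate by $\|x_i-\tilde x_i\|$ directly; both yield the same constant.
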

\begin{proof}
\an{
First, we note that for each $i,l$,
\begin{align*}
\|F_{il}(x_{il},u_l) - F_{il}(\tilde x_{il}, u_l)\| 
& = \sqrt{| c_{il}'(g_{il}) - c_{il}'(\tilde g_{il})|^2+|p'_l(u_l)(s_{il}-\tilde s_{il})|^2} \cr
& \le \sqrt{L_{il}^2 |g_{il} - \tilde g_{il}|^2+\bar p_l^2 |s_{il}-\tilde s_{il}|^2}.
\end{align*}
Recalling our notation $x_{il} = (g_{il},s_{il})$ and $\tilde x_{il} = (\tilde g_{il},\tilde s_{il})$, and using
H\"older's inequality, we find  that 
\[\|F_{il}(x_{il},u_l)-F_{il}(\tilde x_{il}, u_l)\| 
\leq \sqrt{ L_{il}^2 + \bar p_l^2}\, \|x_{il} - \tilde x_{il}\|.\]
Further, recalling that $x_i=(x_{i1},\ldots, x_{i\mathcal{L}})$, 
$\tilde x_i=(\tilde x_{i1},\ldots, \tilde x_{i\mathcal{L}}),$ and $u=(u_1,\ldots, u_{\mathcal{L}})$,
the desired result follows from the preceding relation by using H\"olders inequality.
}
\end{proof}

In our numerical study, we consider a Nash-Cournot game 
	played over ten locations, i.e. $\mathcal{L} =10$, in
	which all players have {cost functions of a similar structure} and the $i$th player's
		optimization problem \us{may} be expressed as
\begin{align}\label{eqn:game_numeric}
\mbox{minimize} & \qquad \sum_{l =1}^{10} \left( c_{il} (g_{il}) - p_l(\bar s_l)
		s_{il}\right)\cr
\mbox{subject to} & \qquad \sum_{l=1}^{10} g_{il} = \sum_{l=1}^{10} s_{il},
	\notag \\& \qquad g_{il}, s_{il} \geq 0,\quad g_{il} \leq \mathrm{cap}_{il}, \qquad l = 1,
	\hdots, {10}, 
\end{align}
where $g_{il}$ and $s_{il}$ denote player $i$'s production and sales at location $l,$ respectively, 
and $\bar s_l$ denotes the aggregate of all the players' decisions 
($\bar s_l = \sum_{i=1}^N s_{il}$) at location $l$. 
\an{The function $c_{il}(g_{il})$ denotes the 
cost of production for $i$th player at location $l$ and has the following form:
\[c_{il}(g_{il}) =  a_{il}g_{il} + b_{il} g_{il}^2,\]
where $a_{il}$ and $b_{il}$ are scaling parameters for agent $i$. 
In our experiments, we draw $a_{il}$ and $b_{il}$ from 
a uniform distribution and fix them over the course of the entire 
simulation. More precisely, for $i=1,\ldots, N,$ and $l=1, \ldots, 10,$ we have $a_{il}
\sim U(2,12)$ and $b_{il} \sim U(2,3),$ where $U(t,\tau)$ denotes the
uniform distribution over an interval $[t,\tau]$ with 
$t< \tau.$ 
The term $p_l(\bar s_l)$ captures the inverse demand function and takes the following form:
\[p_l(\bar s_l) = d_l - \bar s_l,\]
where $d_l$ is a parameter for location $l$. The parameters $d_l$ are also drawn randomly with a uniform distribution, $d_l \sim U(90, 100)$ for all $l=1,\ldots, 10.$
Furthermore, we use $cap_{il} = 500$ for all $i = 1, \ldots, N$ and for all $l=1,\ldots, 10.$ 
The affine price function gives rise to a strongly
	monotone map $\phi=F(x,\bar x)$, which together with the compactness of the sets $K_i$, implies that
	this game has a unique Nash equilibrium.
	} Note that in our setup, we have $cap_{il} > d_l$ indicating that at a particular location, a player may 
	produce more than the overall demand at that particular location. Such a scenario can arise when it might 
	be more efficient to produce at a location to meet the demand(s) of another location(s) assuming the transportation costs are zero.
\subsection{Synchronous Algorithm}\label{sec:diffusion_numeric}
In this section, we investigate the performance of synchronous 
algorithm of section~\ref{sec:synchron} for the computation of 
the equilibrium of aggregative game~\eqref{eqn:game_numeric}.
\an{We begin by describing our setting for the connectivity graph of 
the network of players, where each player is seen as a node in a graph. 
At each iteration $k,$ we generate a symmetric $N\times N$ adjacency matrix $A$ such 
that the underlying graph is connected. The entries of 
$A$ are generated by performing the following steps:
\begin{itemize}
\item[(0)] Let $I$ denote the set of nodes that have already 
been generated;
\item[(1)] For each newly generated node $j$, select a node 
randomly $i \in I$ to establish an edge $\{i,j\}$ and set 
$[A]_{ij}=[A]_{ji}=1$;
\item[(2)] Repeat step 1 until $I=\{1,\ldots,N\}$.
\end{itemize}
}

Given such an adjacency matrix $A,$ we define a doubly stochastic 
symmetric weight matrix $W$ such that
\[
 [W]_{ij} = 
 \left\{
  \begin{array}{ll}
   0        & \mbox{if } A_{ij} = 0 \\
   \delta & \mbox{if $A_{ij} = 1$ and $i\ne j$} \\
   1 - \delta d(i) & \mbox{if } i =j,
  \end{array}
 \right.
\]
where $d(i)$ represents the number of players communicating with player $i,$ and 
\[\delta = \frac{0.5}{\max_i \{d(i)\}} .\]
Using the adjacency matrix $A$ and the weight matrix $W$, players update their decision
and their estimate of the average using~\eqref{eqn:estimate_mixing}--\eqref{eqn:estimate_update}.
The stepsize rule for agent update is as follows:
\[\alpha_{k,i} = \frac{1}{k} \quad \mbox{for all } i=1,\ldots, N.\]
The algorithm is initiated {at a random starting point, and terminated after 
a fixed number of iterations, denoted by $\tilde k,$ for each
sample path. We use a set of 50 sample paths for each simulation setting,
and we report the empirical mean of the sample errors, defined as:}
\begin{align}
\label{eqn:def-err_ell}
\us{
\mbox{error}_{\tilde k} 
=
\frac{\displaystyle \max_{i \in \{1, \ldots,N\}} \max_{l \in \{1, \ldots,
	10\}} \left\{|g_{il}^{\tilde k}-g_{il}^*|, |s_{il}^{\tilde k}-
s_{il}^*|\right\}}{\displaystyle \max_{i\in \{1, \ldots,N\}} \max_{l\in
	\{1, \ldots,
	10\}} \left\{|g_{il}^*|, |s_{il}^*|\right\}},
}
\end{align}
where $g^*_{il}$ and $s^*_{il}$ are the decisions of agent $i$ at the Nash 
equilibrium. The Nash equilibrium decisions $g^*_{il}$ and $s^*_{il}$
are computed using a constant steplength gradient projection algorithm
assuming each agent has true information of the aggregate. Note that
such an algorithm is guaranteed to converge under the strict convexity
of the players' costs.

We investigate cases with 20 and 50 players in the network.
In Table~\ref{tab:diffusion_err} and Table~\ref{tab:diffusion_cf}, we
report the \us{empirical mean of the error}
and \jk{90\%} confidence interval attained for different levels of $\tilde{k}$,
\us{(the simulation length)}, respectively.
Some insights that can be drawn from the simulations are
provided next:
\begin{itemize}
\item Expectedly, as seen in Table~\ref{tab:diffusion_err} and Table~\ref{tab:diffusion_cf}, the 
\us{empirical mean of the error upon termination} and the width of the confidence interval 
\us{decrease} with increasing $\tilde k$ and  \us{increase}  with 
network size.
\item The impact of the time-varying nature of the connectivity graph is
explored by considering a static complete graph as a basis for
comparison. 
In Table~\ref{tab:diffusion_error_static} and
Table~\ref{tab:diffusion_conf_int_static}, we
report the mean error and the confidence interval when the network is static.
Under this setting, the agents have access to the true 
aggregate information throughout the run of the algorithm. 
Naturally, the performance of the algorithm on a static 
complete network is orders of magnitude better than that on a 
dynamic network. This deterioration in performance may be
interpreted as the price of information from the standpoint of
convergence.
\end{itemize}

\begin{table}[H]
\begin{minipage}[b]{0.47\linewidth}
\centering
\caption{Dynamic network: Mean error on termination vs network size for various thresholds}
\vspace{0.15cm}
\label{tab:diffusion_err}
\begin{tabular}{c|c|c}
\hline
$N$ &  $\tilde{k}=5e$3 &  $\tilde{k}=1e$4 \\
\hline
20	&  9.22$e$-5	&	3.66$e$-5 \\
\hline
50	&	 8.38$e$-2  &	2.65$e$-3 \\
\hline
\end{tabular}
\end{minipage}
\hspace{0.1cm}
\begin{minipage}[b]{0.47\linewidth}
\centering
\caption{Dynamic network: Width of \jk{90\%} confidence interval of mean error}
\vspace{0.15cm}
\label{tab:diffusion_cf}
\begin{tabular}{c|c|c}
\hline
$N$ &  $\tilde{k}=5e$3 &  $\tilde{k}=1e$4 \\
\hline
20	& 2.147$e$-4	& 9.33$e$-5	\\
\hline
50	& 1.24$e$-1	& 2.78$e$-2	\\
\hline
\end{tabular}
\end{minipage}
\end{table}

\begin{table}[H]
\begin{minipage}[b]{0.47\linewidth}
\centering
\caption{Static network: Mean error on termination}
\vspace{0.15cm}
\label{tab:diffusion_error_static}
\begin{tabular}{c|c|c}
\hline
$N$ &  $\tilde{k}=5e$3 &  $\tilde{k}=1e$4 \\
\hline
20	&  3.66$e$-5	&	3.66$e$-5 \\
\hline
50	&	 6.23$e$-5  &	6.23$e$-5 \\
\hline
\end{tabular}
\end{minipage}
\hspace{0.1cm}
\begin{minipage}[b]{0.47\linewidth}
\centering
\caption{Static network: Width of \jk{90\%} confidence interval of mean error}
\label{tab:diffusion_conf_int_static}
\vspace{0.15cm}
\begin{tabular}{c|c|c}
\hline
$N$ &  $\tilde{k}=5e$3 &  $\tilde{k}=1e$4 \\
\hline
20	& 6.99$e$-9	& 6.99$e$-9\\
\hline
50	& 1.61$e$-8	& 1.61$e$-8\\
\hline
\end{tabular}
\end{minipage}
\end{table}

\subsection{Asynchronous Algorithm}\label{sec:gossip_numeric}
We now demonstrate the performance of the asynchronous algorithm
of section~\ref{sec:asynchron}. We consider four 
instances of connectivity graphs which we describe next and, also, 
depict these graphs in Figure~\ref{fig:various_network}\footnote{The network topology shown in 
Figure~\ref{fig:various_network} is for demo purposes only. 
For instance, Figure~\ref{fig:various_network}(a) is an example of a cycle network with 5 players.}.
\begin{itemize}
\item {\em Cycle}: Every player has two neighbors; 
\item {\em Wheel}: There is one central player that is connected to every other
player;
\item {\em Grid}: Players on the vertex have two neighbors, players on the edge 
have three and everyone else has four neighbors. Each row in the 
grid consists of five players and there are $N/5$ rows where $N$
is the size of the network;
\item{\em Complete graph:} Every player has an edge
	connecting it to every other player. 
\end{itemize}
Note that, in each connectivity graph, players can only communicate with their immediate neighbors.

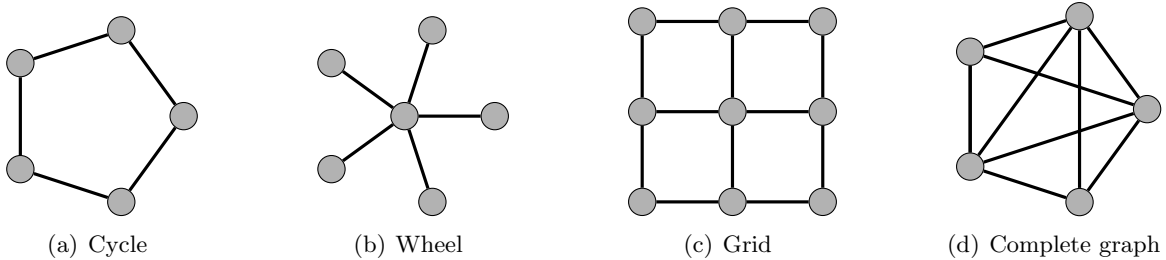
\begin{figure}[H]
\centering
\subfigure[Cycle]{
\begin{tikzpicture}[scale=1.2]
\tikzstyle{vertex}=[draw,shape=circle];
\tikzstyle{selected}=[vertex,fill=black];
\tikzstyle{neighbor}=[vertex,fill=black!30];
\tikzstyle{nonneighbor}=[vertex,fill=black!0];

\tikzstyle{edge} = [draw,thick,-]
\tikzstyle{selected edge} = [draw,line width=1.2pt,dashed,black]
\tikzstyle{ignored edge} = [draw,line width=1.2pt,black]

\path (0:1cm)    node[neighbor] (v1) {};
\path (72:1cm)   node[neighbor] (v2) {};
\path (2*72:1cm) node[neighbor] (v3) {};
\path (3*72:1cm) node[neighbor] (v4) {};
\path (4*72:1cm) node[neighbor] (v5) {};

\path[ignored edge] (v1) -- (v2);
\path[ignored edge] (v2) -- (v3);
\path[ignored edge] (v3) -- (v4);
\path[ignored edge] (v4) -- (v5);
\path[ignored edge] (v1) -- (v5);
\end{tikzpicture}}
\hspace{1.2cm}
\subfigure[Wheel]{
\begin{tikzpicture}[scale=1.2]
\tikzstyle{vertex}=[draw,shape=circle];
\tikzstyle{selected}=[vertex,fill=black];
\tikzstyle{neighbor}=[vertex,fill=black!30];
\tikzstyle{nonneighbor}=[vertex,fill=black!0];

\tikzstyle{edge} = [draw,thick,-]
\tikzstyle{selected edge} = [draw,line width=1.2pt,dashed,black]
\tikzstyle{ignored edge} = [draw,line width=1.2pt,black]

\path[selected] (0:0cm)    node[neighbor] (v0) {};
\path (0:1cm)    node[neighbor] (v1) {};
\path (72:1cm)   node[neighbor] (v2) {};
\path (2*72:1cm) node[neighbor] (v3) {};
\path (3*72:1cm) node[neighbor] (v4) {};
\path (4*72:1cm) node[neighbor] (v5) {};


\path[ignored edge] (v0) -- (v1);
\path[ignored edge] (v0) -- (v2);
\path[ignored edge] (v0) -- (v3);
\path[ignored edge] (v0) -- (v4);
\path[ignored edge] (v0) -- (v5);
\end{tikzpicture}}
\hspace{1.2 cm}
\subfigure[Grid]{
\begin{tikzpicture}[scale=1.2]
\tikzstyle{vertex}=[draw,shape=circle];
\tikzstyle{selected}=[vertex,fill=black];
\tikzstyle{neighbor}=[vertex,fill=black!30];
\tikzstyle{nonneighbor}=[vertex,fill=black!0];

\tikzstyle{edge} = [draw,thick,-]
\tikzstyle{selected edge} = [draw,line width=1.2pt,dashed,black]
\tikzstyle{ignored edge} = [draw,line width=1.2pt,black]


\node[neighbor] (v1) at (0,0) {};
\node[neighbor] (v2) at (0,1) {};
\node[neighbor] (v3) at (0,-1) {};
\node[neighbor] (v4) at (1,0) {};
\node[neighbor] (v5) at (-1,0) {};
\node[neighbor] (v6) at (-1,1) {};
\node[neighbor] (v7) at (1,-1) {};
\node[neighbor] (v8) at (1,1) {};
\node[neighbor] (v9) at (-1,-1) {};

\path[ignored edge] (v1) -- (v2);
\path[ignored edge] (v1) -- (v3);
\path[ignored edge] (v1) -- (v4);
\path[ignored edge] (v1) -- (v5);
\path[ignored edge] (v5) -- (v6);
\path[ignored edge] (v5) -- (v9);
\path[ignored edge] (v2) -- (v6);
\path[ignored edge] (v2) -- (v8);
\path[ignored edge] (v4) -- (v8);
\path[ignored edge] (v4) -- (v7);
\path[ignored edge] (v3) -- (v7);
\path[ignored edge] (v3) -- (v9);
\end{tikzpicture}}
\hspace{1.2cm}
\subfigure[Complete graph]{
\begin{tikzpicture}[scale=1.3]
\tikzstyle{vertex}=[draw,shape=circle];
\tikzstyle{selected}=[vertex,fill=black];
\tikzstyle{neighbor}=[vertex,fill=black!30];
\tikzstyle{nonneighbor}=[vertex,fill=black!0];

\tikzstyle{edge} = [draw,thick,-]
\tikzstyle{selected edge} = [draw,line width=1.2pt,dashed,black]
\tikzstyle{ignored edge} = [draw,line width=1.2pt,black]

\path (0:1cm)    node[neighbor] (v1) {};
\path (72:1cm)   node[neighbor] (v2) {};
\path (2*72:1cm) node[neighbor] (v3) {};
\path (3*72:1cm) node[neighbor] (v4) {};
\path (4*72:1cm) node[neighbor] (v5) {};

\path[ignored edge] (v1) -- (v2);
\path[ignored edge] (v1) -- (v3);
\path[ignored edge] (v1) -- (v4);
\path[ignored edge] (v1) -- (v5);
\path[ignored edge] (v2) -- (v3);
\path[ignored edge] (v2) -- (v4);
\path[ignored edge] (v2) -- (v5);
\path[ignored edge] (v3) -- (v4);
\path[ignored edge] (v4) -- (v3);
\path[ignored edge] (v4) -- (v5);
\end{tikzpicture}}
\caption{A depiction of communication networks used in simulations.}
\label{fig:various_network}
\end{figure}

For every type of connectivity graph,
we initiate the algorithm from a random starting point and terminate it
after $\tilde k$ iterations. A 95\% confidence interval 
of the mean sample error at the termination is computed for a sample of size 50, where 
the sample-path error is defined as in~\eqref{eqn:def-err_ell}.
The players' stepsize rules that we use are:
\[
\alpha_{k,i} =
\left \{
\begin{array}{ll}
\frac{9}{\Gamma_k(i)} & \mbox{for a diminishing stepsize} \\
 \alpha_i & \mbox{for a constant stepsize,}
\end{array}
\right.
\]
where $\alpha_i$ is randomly drawn from a uniform distribution, 
$\alpha_i\sim U(5e$-$3,1e$-$2)$. 
We again investigate cases when
there are 20 and 50 players in the network and derive the
	following insights:
\begin{itemize}
\item In Tables~\ref{tab:gossip1}--\ref{tab:gossip2}, we report the mean error 
and in Tables~\ref{tab:gossip_conf1}--\ref{tab:gossip_conf2}, we report the width 
of the confidence interval for various levels of 
$\tilde k$. The results are consistent with our 
theoretical findings, and they indicate a decrease in the mean 
error and the width of the
confidence interval with increasing $\tilde k.$ As 
expected, the mean error at the termination increases with
the size of the network. It is worth 
mentioning the discrepancy in the value of $\tilde k$ 
across synchronous and asynchronous algorithm. Note that
in the asynchronous algorithm, only two agents are 
performing updates and thus, for a network of size $N,$ 
$\tilde k$ global iterations translates to $2\tilde k /N$ iterations per agent, approximately.
\item On comparing the performance of the synchronous algorithm 
(cf.~Tables~\ref{tab:diffusion_err}--\ref{tab:diffusion_conf_int_static}) to that of the asynchronous algorithm
(cf.~Tables~\ref{tab:gossip1}--\ref{tab:gossip_conf2}), we observe that the
synchronous algorithm performs better than its asynchronous
counterpart in terms of mean error and the confidence width at termination. This is expected as 
in the synchronous setting, the players' communicate more frequently and the network diffuses information
faster than in the asynchronous setting.
\item The nature of the connectivity graph plays an important role in
	the performance 
of the synchronous algorithm. However, such an influence in the
asynchronous setting is less pronounced. 
\item In an effort to better understand the impact of connectivity, in
Table~\ref{tab:gossip_iter}, we compare the number of iterations\footnote{The iteration number is the mean for 50 sample rounded to the smallest integer over-estimate.} 
required for the player's to concur on the aggregate $\bar g^*$ within a threshold of 
1$\texttt{e}$-3 when the network consists of $N=20$ players. 
We also present a metric of connectivity density given by
$p_{\min}/p_{\max}$ as well as the 
square root of the second largest eigenvalue of the expected 
weight matrix, i.e., $\sqrt{\lambda},$ 
which in effect determines the rate of information dissemination in the network.
We note that the number of
iterations needed to achieve the threshold error correlates  with 
the value of $\sqrt{\lambda}$ and this prompts us to arrive at 
the following 
conclusion: Having a well-informed up-to-date neighbor is more 
important than having a denser connectivity. For instance, a
wheel network has a poor connectivity of all the network type based on
the $p_{\min}/p_{\max}$ criterion 
yet it has superior aggregate convergence to all but the complete 
network. In part, this is because the central agent in such a
	network updates throughout the course of the algorithm, allowing
		for good 
mixing of network wide information. In contrast, the cycle network 
though better connected yet cannot ensure good mixing of information,
	   given that no agent has access to ``good information.''
Similarly, a complete network 
provides each agent with an opportunity to communicate with every other 
agent and thus ensures good mixing of information. The grid 
network falls between the wheel and the cycle network in terms of
availability of well-informed neighbors and thus the performance.
\end{itemize}

%
%
\begin{table}[H]
\centering
\caption{Mean error after $\tilde k= 5e4$ iterations for gossip algorithm}
\label{tab:gossip1}
\begin{tabular}{c|c|c|c|c||c|c|c|c}
\hline
 & \multicolumn{4}{c||}{Constant Step} & \multicolumn{4}{c}{Diminishing Step}\\
\hline
N & Cycle & Wheel & Grid & Complete & Cycle & Wheel & Grid & Complete  \\
\hline
20	&  2.29$e$-3	& 3.66$e$-5	& 3.66$e$-5	& 3.66$e$-5	
&2.51$e$-2	& 1.01$e$-4	& 1.93$e$-3	& 4.64$e$-5\\
\hline
50	& 2.80$e$-1	& 6.76$e$-2	& 1.76$e$-1	& 1.26$e$-3	
&1.22	& 2.33$e$-2	& 8.41$e$-1	& 3.68$e$-3\\
\hline 
\end{tabular}
\end{table}

\begin{table}[H]
\centering
\caption{Mean error after $\tilde k= 1e5$ iterations for gossip algorithm}
\label{tab:gossip2}
\begin{tabular}{c|c|c|c|c||c|c|c|c}
\hline
 & \multicolumn{4}{c||}{Constant Step} & \multicolumn{4}{c}{Diminishing Step}\\
\hline
N & Cycle & Wheel & Grid & Complete & Cycle & Wheel & Grid 
& Complete\\
\hline
20	& 3.78$e$-5	& 3.66$e$-5	& 3.66$e$-5	& 3.66$e$-5	
& 3.93$e$-3	& 3.65$e$-5	& 1.69$e$-4	& 3.67$e$-5
\\ 
\hline
50 & 1.65$e$-1	& 1.09$e$-3	& 9.19$e$-2	& 6.23$e$-5	
& 7.63$e$-1	& 1.99$e$-3	& 4.57$e$-1	& 2.83$e$-4\\ 
\hline
\end{tabular}
\end{table}

\begin{table}[H]
\centering
\caption{Width of \jk{90\%} confidence interval after $\tilde k= 5e4$ iterations for gossip algorithm}
\label{tab:gossip_conf1}
\begin{tabular}{c|c|c|c|c||c|c|c|c}
\hline
 & \multicolumn{4}{c||}{Constant Step} & \multicolumn{4}{c}{Diminishing Step}\\
\hline
N & Cycle & Wheel & Grid & Complete & Cycle & Wheel & Grid & Complete  \\
\hline
20	& 1.87$e$-4	& 8.22$e$-7	& 5.34$e$-7	& 0$e$0	
& 7.51$e$-2	& 4.76$e$-3	& 2.08$e$-2	& 3.23$e$-3\\
\hline
50	& 1.68$e$-2	& 6.33$e$-3	& 1.89$e$-2	& 1.77$e$-4	
& 5.23$e$-1	& 7.23$e$-2	& 4.35$e$-1	& 2.87$e$-2\\
\hline 
\end{tabular}
\end{table}

\begin{table}[H]
\centering
\caption{Width of \jk{90\%} confidence interval after $\tilde k= 1e5$ iterations for gossip algorithm}
\label{tab:gossip_conf2}
\begin{tabular}{c|c|c|c|c||c|c|c|c}
\hline
 & \multicolumn{4}{c||}{Constant Step} & \multicolumn{4}{c}{Diminishing Step}\\
\hline
N & Cycle & Wheel & Grid & Complete & Cycle & Wheel & Grid 
& Complete\\
\hline
20	& 2.4$e$-6 & 4.74$e$-10	& 4.74$e$-10	& 4.74$e$-10	
& 4.40$e$-4	& 7.23$e$-7	& 2.07$e$-5	& 1.56$e$-7
\\ 
\hline
50 & 1.35$e$-2 & 1.33$e$-4	& 1.40$e$-2	& 1.78$e$-8	
& 6.91$e$-2	& 1.81$e$-4	& 4.15$e$-2	& 2.22$e$-5\\ 
\hline
\end{tabular}
\end{table}

\begin{table}[H]
\centering
\caption{Number of iteration for concurrence of player's aggregate
within an error of 1$e$-3} 
\label{tab:gossip_iter}
\begin{tabular}{c|c|c|c}
\hline
Network & $p_{\min}/p_{\max}$ & $\lambda$ & Iterations \\
\hline
Cycle & 1 & 0.9994 & 48818 \\
\hline
Wheel & 1/19 & 0.1622 & 8324 \\
\hline
Grid & 5/7 & 0.3151 & 17950 \\
\hline
Complete & 1 & 1.0888e-08 & 5842\\
\hline 
\end{tabular}
\end{table}

\section{Summary and Conclusions}\label{sec:conclusion}
This paper focuses on a class of Nash games in which player interactions
are seen through the aggregate sum of all players' actions. The players
and their interactions are modeled as a network with limited
connectivity which only allows for restricted local communication. We
propose two types of algorithms, namely, \us{a} synchronous (consensus-based)
	and an asynchronous (gossip-based) distributed algorithm, both of
	which abide \us{by an} information exchange restriction for computation of
	an equilibrium point. Our synchronous algorithm allows for
	implementation in a dynamic network with \us{a} time-varying
	connectivity \us{graph}.
	In contrast, our asynchronous algorithm allows for an implementation
	in a static network.  We establish error bounds on the deviation of
	players's decision from the equilibrium decision when a constant,
	yet player specific, stepsize is employed in the asynchronous
	algorithm.  Our extensions allow the players' decisions to be
	coupled in a more general form of ``aggregates''.  The contribution
	of our work can broadly be summarized as: (1) the development of
	synchronous and asynchronous distributed algorithms for aggregative
	games over graphs; (2) the establishment of the convergence of the
	algorithms to an equilibrium point, including the case with player
	specific stepsizes; and (3) an extension to a more general classes
	of aggregative games.  We also provide illustrative numerical
	results that support our theoretical findings.

\section*{Appendix}

{\bf Proof of Lemma~\ref{lemma:stepsize}:}
\begin{proof}
Note that $\Gamma_{k,i} =\sum_{t=1}^k \mathbbm{1}_{S_{i,t}} $ where 
$S_{i,t}$ is the event that
agent $i$ updates at time $t$ and $\mathbbm{1}_S$ is the indicator function of 
an event $S$. Since the events $S_{i,t}, t=1,2,\ldots,$ are i.i.d.\ with mean \us{$p_i$} 
for each $i \in \Nscr$, by the law of iterated logarithms (cf.~\cite{dudley}, pages 476--479), we have for any
$q > 0$, with probability 1,
$$\lim_{k\to \infty} \frac{|\Gamma_k(i) - kp_i|}{k^{\frac{1}{2}+q}} = 0 \; 
\mbox{for all} \; i \in \Nscr.$$
Thus, \uvs{for almost every $\omega \in \Omega$, there exists a  sufficiently large
	$\tilde k(\omega)$} (depending on $q$ and $\Nscr$), \uvs{such that} 
\begin{equation}\label{eq:m}
\frac{|\Gamma_k(i) - kp_i|}{k^{\frac{1}{2}+q}} \le \frac{1}{N^2} \; 
\mbox{for all} \; k \geq \tilde k(\uvs{\omega}) \; \mbox{and} \; i \in \Nscr.
\end{equation}
\uvs{This implies that for almost every $\omega \in \Omega$} and for all $i \in
\Nscr$ and $k \geq \tilde k(\uvs{\omega}),$
\begin{align}
\Gamma_k(i) \geq k p_i - \frac{1}{N^2}k^{\frac{1}{2}+q}=  
\left(p_ik^{\frac{1}{2}-q} - \frac{1}{N^2}\right)k^{\frac{1}{2}+q}.
\label{lem-71}
\end{align}
Now, let $q\in(0, 1/2)$, \uvs{implying that} the term $k^{\frac{1}{2}-q}p_i$ tends to infinity as $k$ tends to infinity.
Thus, we can choose a larger $\tilde k\uvs{(\omega)}$ (if needed) so that with
probability 1,  
we have
\begin{align}
p_ik^{\frac{1}{2}-q} - \frac{1}{N^2} \geq \frac{1}{2}p_ik^{\frac{1}
{2}-q} \qquad \mbox{for all } k \geq \tilde k \mbox{ and } i \in \Nscr, 
	\label{lem-72}
\end{align}
\uvs{or equivalently
\[
\mathbb{P} \left[ \omega: p_ik^{\frac{1}{2}-q} - \frac{1}{N^2} \geq \frac{1}{2}p_ik^{\frac{1}
{2}-q} \quad \mbox{for all } k \geq \tilde k(\uvs{\omega}) \mbox{ and }
i \in \Nscr \right] = 1. 
\]
}
From \uvs{\eqref{lem-71} and \eqref{lem-72}}, we obtain that \uvs{for
	almost every $\omega \in \Omega$},  $\Gamma_k(i) \geq \frac{1}
{2}k p_i$ for all $k\ge\tilde k\uvs{(\omega)}$ and all $i \in \Nscr$, implying that
\begin{equation}\label{eq:mm}
\uvs{ \mathbb{P}\left[\omega : \frac{1}{\Gamma_k(i)} \leq \frac{2}{kp_i} \quad \mbox {for all } k \geq 
\tilde k(\uvs{\omega}) \mbox{ and } i \in \Nscr\right] = 1},
\end{equation}
thus showing the first relation of the lemma  holds in view of $\alpha_{k,i} =\frac{1}{\Gamma_k(i)}.$


We now consider $\left|\alpha_{k,i} - \frac{1}{kp_i} \right|.$ \uvs{For
almost	every $\omega \in \Omega$}, we have 
	for all $k\geq \tilde k(\uvs{\omega})$ and $i \in \Nscr$,
\[
\left| \alpha_{k,i} - \frac{1}{kp_i}  \right| = \frac{1}
{kp_i}\frac{1}{\Gamma_k(i)}|kp_i - \Gamma_k(i)|\leq \frac{2}
{k^2p_i^2}|kp_i - \Gamma_k(i)|,
\]
where the inequality follows by~\eqref{eq:mm}. By using relation~\eqref{eq:m}, it follows that
\uvs{for almost every $\omega \in \Omega$} and for all $k \geq \tilde k(\uvs{\omega})$ and $i \in \Nscr,$
\[
\left|\alpha_{k,i} - \frac{1}{kp_i} \right| \leq \frac{2}{k^2p^2_{i}}\frac{k^{\frac{1}{2}+q}}{N^2} 
=\frac{2}{k^{\frac{3}{2} -q} p^2_{i}N^2}.
\]
Since agent $i$ updates with probability $p_i = \frac{1}{N}(1+ \sum_{j \in 
\Nscr_i} p_{ij}),$ it follows that
\begin{equation}
p_i \geq \frac{1}{N} \left(1+ \left(\min_{\{i,j\} \in \Escr} p_{ij}\right) |\Nscr_i| \right)\geq 
\frac{1}{N}\left(1 + \min_{\{i,j\} \in \Escr} p_{ij}\right), 
\end{equation}
where the last inequality follows from $|\Nscr_i| \geq 1$ (since the graph 
$(\Nscr,\us{\Escr})$ has no isolated node). 
Hence, 
\uvs{for almost every $\omega \in \Omega$} and for all $k\ge \tilde
	k(\uvs{\omega})$ and all $i$,
\[
\left|\alpha_{k,i} - \frac{1}{kp_i} \right| \leq \frac{2N^2}{k^{\frac{3}{2} -q} \left(1 + \min_{\{i,j\} \in \Escr} p_{ij}\right)^2 N^2}
=\frac{2}{k^{\frac{3}{2} -q} \left(1 + \min_{\{i,j\} \in \Escr} p_{ij}\right)^2}.
\]
The desired relation follows by letting $\hat p=1 + \min_{\{i,j\} \in
	\Escr} p_{ij}$:
\uvs{\[
\mathbb{P}\left[\omega: \left|\alpha_{k,i} - \frac{1}{kp_i} \right| \leq
\frac{2}{k^{\frac{3}{2} -q} \left(1 + \min_{\{i,j\} \in \Escr}
		p_{ij}\right)^2} \mbox{ for } k \geq \tilde{k}(\omega)\right] =
1.
\]}

\end{proof}

\section*{Acknowledgement}
The authors are deeply grateful to A. Kulkarni and B. Touri in providing some helpful suggestions regarding the proof of Lemma 10.

\bibliographystyle{abbrv}
\bibliography{user_optim,references}

\end{document}